\documentclass{article}
\usepackage[T1]{fontenc}
\usepackage[utf8]{inputenc}
\usepackage{lmodern}
\usepackage[margin=1in]{geometry}
\usepackage{amsmath,amssymb,amsthm,mathtools}
\usepackage{enumitem}
\usepackage{tikz}
\usepackage{float}
\usepackage{pgfplots}
\pgfplotsset{compat=1.18}
\usetikzlibrary{arrows.meta,calc,patterns,positioning}
\usepackage{xcolor}
\usepackage{PRIMEarxiv}
\usepackage{algorithm}
\usepackage{algpseudocode}
\title{Layered Hypervolume and Magnitude Indicator Ascent}

\definecolor{darkblue}{RGB}{11,27,205}
\definecolor{darkgreen}{RGB}{11,27,05}
\definecolor{darkorange}{RGB}{253,18,11}
\definecolor{layerblue}{RGB}{138,182,255}
\definecolor{layergreen}{RGB}{141,197,135}
\definecolor{layerorange}{RGB}{253,188,142}
\usepackage{hyperref}
\hypersetup{colorlinks=true,linkcolor=blue!50!black,urlcolor=blue!50!black,citecolor=blue!50!black}

\title{Nonsmooth Set-Gradient Ascent to the Pareto Front via Layered Hypervolume and Magnitude Indicators}
\author{
Michael T. M. Emmerich\\
Faculty of Information Technology, University of Jyväskylä, Finland\\
\texttt{michael.t.m.emmerich@jyu.fi}\\
\href{https://orcid.org/0000-0002-7342-2090}{ORCID: 0000-0002-7342-2090}
}
\date{Faculty of Information Technology, University of Jyväskylä, Finland}

\newtheorem{definition}{Definition}
\newtheorem{proposition}{Proposition}
\newtheorem{theorem}{Theorem}
\newtheorem{remark}{Remark}
\newtheorem{example}{Example}

\newcommand{\Mag}{\operatorname{Mag}}
\newcommand{\HV}{\operatorname{HV}}
\newcommand{\clarke}{\partial^{\!C}}
\newcommand{\R}{\mathbb{R}}
\newcommand{\Ginf}{\text{\textcircled{\scriptsize 1}}}
\newcommand{\front}[1]{A^{(#1)}}
\newcommand{\Dom}{\operatorname{D}}

\begin{document}
\maketitle

\begin{abstract}
A nonsmooth set-gradient ascent method is developed for moving finite approximation sets toward the Pareto front in multiobjective optimization. The method optimizes layered set indicators: a base indicator is evaluated on successive nondomination layers, and the layer values are combined with rapidly decreasing weights. This gives ascent directions to nondominated and dominated points while preventing deeper layers from compensating for deterioration of the first front. Two base indicators are treated: the hypervolume indicator and the magnitude indicator of the dominated set, whose expansion over coordinate projections contains extent, projected-area, and volume terms. The scalar objectives are nonsmooth because nondomination layers change combinatorially and the active orthogonal-union geometry changes piecewise. On fixed strata, where layer assignments and active geometry remain unchanged, the indicators are piecewise smooth and chamberwise continuous. For the magnitude indicator, an exact gradient formula is derived as a linear combination of hypervolume gradients of projected shadow sets. Thus, for fixed objective dimension, magnitude gradients have the same asymptotic time complexity as hypervolume gradients. Lexicographic layer aggregation is related to a unary infinitesimal encoding. For finite-\(\epsilon\) surrogates, the main nonsmoothness mechanisms are isolated and chamberwise Lipschitz continuity on bounded sets is proved; a two-point counterexample shows that hard-layer scalarization is not globally continuous across layer switches. The theory motivates a projected finite-difference implementation with repulsion and recovery from stagnation. Numerical examples and reproducible code cover two- and three-objective settings, including objective-space tests, curved fronts, a supersphere benchmark, and traces comparing layered magnitude and hypervolume ascent.
\end{abstract}

\keywords{multiobjective optimization \and hypervolume indicator \and magnitude indicator \and set gradients \and nondominated sorting \and lexicographic scalarization \and nonsmooth optimization \and Pareto-front approximation}

\section{Introduction}
A finite approximation set is often the practical output of a multiobjective optimization algorithm.  Instead of returning a single point, the algorithm returns a set of objective vectors that should represent the trade-off surface.  This naturally leads to set-valued search dynamics: the variables are the coordinates of all approximation points, and the objective is a quality indicator of the set as a whole.

We use the following notation for the underlying Pareto-front approximation problem.  The decision space is denoted by $\mathcal X$, and a feasible decision $x\in\mathcal X$ is mapped into objective space by an objective vector $F(x)=(F_1(x),\dots,F_m(x))$.  The component functions are denoted by lowercase symbols $f_i$, so that $F_i(x)=f_i(x)$ is the corresponding objective value.  In figures and tables, $F_i$ denotes an objective-space coordinate or value, while $f_i$ denotes the function itself.  For maximization, an objective vector $y\in\mathbb R^m$ weakly Pareto-dominates $z\in\mathbb R^m$ if $y_i\ge z_i$ for all $i$, and strictly dominates it if, in addition, $y\ne z$.  A decision is efficient if no other feasible decision maps to an objective vector that strictly dominates its objective vector.  The set of all efficient decisions is the efficient set, and its image in objective space is the Pareto front.  The Pareto-front approximation problem is to construct a finite set of decision vectors, or directly a finite set of objective vectors, that represents this front well while keeping the approximation set small.

The most prominent set indicator is the hypervolume indicator, also known as the S-metric in the evolutionary multiobjective optimization literature~\cite{zitzlerthiele1999}.  For maximization with anchor point $r$, the anchored dominated set of a finite objective-space set $B$ is
\[
\Dom_r(B):=\bigcup_{b\in B}[r_1,b_1]\times\cdots\times[r_m,b_m],
\]
with boxes of negative side length omitted when $b_i<r_i$.  Unless explicitly stated otherwise, all dominated sets in this paper are anchored dominated sets; for the origin anchor we write simply $\Dom(B)=\Dom_0(B)$.  The top-dimensional Lebesgue measure of the anchored dominated set,
\[
\HV(B;r):=\lambda_m(\Dom_r(B)),
\]
is the hypervolume indicator.  Its theoretical and computational properties have been studied extensively, including foundations of hypervolume-based multiobjective optimization and algorithms for hypervolume-related computational problems~\cite{augerbaderbrockhoffzitzler2012,guerreirofonsecapaquete2021}.  The other base indicator used here is the magnitude indicator, defined as the \(\ell_1\)-magnitude of the same anchored dominated set.  It extends hypervolume through metric geometry: the top-dimensional volume term recovers the hypervolume of the dominated region, while additional boundary terms record lower-dimensional geometric features such as extents and projected faces~\cite{leinster2013,emmerich2026magnitudedominatedsetspareto}.  Thus the common notation below is a base indicator $\mathcal I$, instantiated either as $\mathcal I(B)=\HV(B;r)$ or as $\mathcal I(B)=\Mag(\Dom_r(B))$.  Since boxes generated by dominated points are contained in boxes generated by nondominated points, both choices depend only on the first nondominated front when applied without layering.  A natural remedy is to decompose the approximation set by nondominated sorting~\cite{deb2001multiobjective}: the first front consists of all points not dominated by any other point in the current set; after removing it, the same operation gives the second front, and iteration gives the deeper fronts.  This paper evaluates $\mathcal I$ on each such layer and studies the resulting \emph{layered indicators} and the nonsmooth ascent dynamics that they induce.

The word \emph{set-gradient} is used here in a concrete computational sense: we differentiate, or approximate by finite differences, a scalar layered indicator with respect to the coordinates of all points in the approximation set.  The resulting direction field moves the set as a whole.  The idea of differentiating an entire finite approximation set arose in hypervolume-gradient work on S-metric maximization~\cite{emmerichdeutzbeume2007}; it is not limited to hypervolume, since set-based derivatives and Newton steps have also been developed for distance-based indicators such as the averaged Hausdorff distance~\cite{uribeetal2020hausdorffnewton}.  If the objective functions are available in closed form and the current configuration lies in a smooth stratum, the chain rule applies: the derivative with respect to decision variables is obtained by multiplying the set-gradient in objective space by the corresponding objective Jacobians.  For hypervolume-gradient fields this relationship and the associated zero structure have been studied in~\cite{emmerichdeutz2014}.  The word \emph{nonsmooth} is essential because the layer structure changes when dominance relations change, and because the orthogonal-union geometry behind the chosen base indicator $\mathcal I$ changes when different boxes, faces, or extrema become active.

\begin{figure}[t]
\centering
\begin{tikzpicture}[scale=0.72]
  \draw[step=1cm,gray!45,thin] (0,0) grid (8,8);
  \draw[->,thick] (0,0) -- (8.35,0) node[right] {$F_1$};
  \draw[->,thick] (0,0) -- (0,8.35) node[above] {$F_2$};
  \foreach \x in {0,...,8} \draw (\x,0) -- +(0,-0.08) node[below] {\x};
  \foreach \y in {0,...,8} \draw (0,\y) -- +(-0.08,0) node[left] {\y};

  \foreach \x/\y in {
    0/0,0/1,0/2,0/3,0/4,0/5,0/6,0/7,
    1/0,1/1,1/2,1/3,
    2/0,2/1,2/2,2/3,
    3/0,3/1,3/2,3/3,
    4/0,4/1,4/2,4/3,
    5/0,5/1,5/2,
    6/0,6/1,6/2}
    {\fill[layerblue,opacity=1] (\x,\y) rectangle ++(1,1);}

  \foreach \x/\y in {
    0/0,0/1,0/2,0/3,0/4,0/5,0/6,
    1/0,1/1,1/2,1/3,
    2/0,2/1,2/2,2/3,
    3/0,3/1,
    4/0,4/1,
    5/0,5/1}
    {\fill[layergreen,opacity=1] (\x,\y) rectangle ++(1,1);}

  \foreach \x/\y in {
    0/0,0/1,0/2,0/3,1/0,2/0,3/0}
    {\fill[layerorange,opacity=1] (\x,\y) rectangle ++(1,1);}

  \fill[darkblue] (1,8) circle (3.2pt);
  \fill[darkblue] (5,4) circle (3.2pt);
  \fill[darkblue] (7,3) circle (3.2pt);

  \fill[darkgreen] (1,7) circle (3.2pt);
  \fill[darkgreen] (3,4) circle (3.2pt);
  \fill[darkgreen] (6,2) circle (3.2pt);

  \fill[darkorange] (1,4) circle (3.2pt);
  \fill[darkorange] (4,1) circle (3.2pt);

  \node[above right] at (1,8) {$(1,8)$};
  \node[above right] at (5,4) {$(5,4)$};
  \node[above right] at (7,3) {$(7,3)$};

  \node[above right] at (1.14,7.14) {$(1,7)$};
  \node[above right] at (3.14,4.14) {$(3,4)$};
  \node[above right] at (6.14,2.14) {$(6,2)$};

  \node[above right] at (1.08,4.12) {$(1,4)$};
  \node[above right] at (4.08,1.12) {$(4,1)$};

  \begin{scope}[shift={(5.0,7.6)}]
    \fill[layerblue,opacity=0.40] (0,0) rectangle +(0.35,0.25);
    \draw[black] (0,0) rectangle +(0.35,0.25);
    \node[anchor=west] at (0.45,0.125) {Layer 1};

    \fill[layergreen,opacity=0.40] (0,-0.45) rectangle +(0.35,0.25);
    \draw[black] (0,-0.45) rectangle +(0.35,0.25);
    \node[anchor=west] at (0.45,-0.325) {Layer 2};

    \fill[layerorange,opacity=0.40] (0,-0.90) rectangle +(0.35,0.25);
    \draw[black] (0,-0.90) rectangle +(0.35,0.25);
    \node[anchor=west] at (0.45,-0.775) {Layer 3};
  \end{scope}
\end{tikzpicture}
\caption{A layered integer-grid example for a base indicator $\mathcal I$ evaluated on successive nondomination layers. The first, second, and third layers are shown in light blue, light green, and light orange, respectively, together with their origin-anchored dominated regions.}
\label{fig:intro-layered-example}
\end{figure}

Figure~\ref{fig:intro-layered-example} shows the layer mechanism on a small integer grid.  The picture is drawn at the level of anchored dominated regions and therefore applies to either base indicator $\mathcal I$: the hypervolume indicator uses their top-dimensional Lebesgue measure, while the magnitude indicator uses their \(\ell_1\)-magnitude.  Each layer contributes its own anchored dominated region; the layer weights then enforce the intended hierarchy, so that improving the first nondominated front dominates improvements obtained only in deeper layers.

The paper develops this idea at two levels.  At the framework level, a layered indicator is obtained by choosing a base set indicator $\mathcal I$, here either anchored hypervolume or the magnitude indicator, applying it to each nondomination layer, and adding the weighted contributions.  At the method level, we study nonsmooth set-gradient ascent for these objectives.  On fixed combinatorial chambers the chosen indicator $\mathcal I$ is differentiable or piecewise differentiable; at dominance switches the appropriate interpretation is Clarke-motivated and nonsmooth.

The main theoretical details separate statements that hold for any finite-valued base indicator $\mathcal I$ from statements that are specific to the magnitude indicator.  The lexicographic layered form, the infinitesimal-style unary encoding, and the basic nonsmooth regularity statements apply to both anchored hypervolume and magnitude indicators.  The projected-gradient formula in Section~\ref{subsec:magnitude-gradient-projections}, by contrast, is specific to the magnitude indicator.  We identify two sources of nonsmoothness for the finite-$\varepsilon$ scalar surrogate: changing nondomination layers and changing active orthogonal-union geometry.  On chambers with fixed layer membership we prove Lipschitz continuity on bounded sets for both indicators, whereas a two-point example shows that the hard-layer scalarization is not continuous across layer switches.

The computational part instantiates the same set-gradient idea for both layered magnitude and layered hypervolume indicators.  We use projected finite-difference ascent in small biobjective examples and exact or sweep-based indicator gradients in three-objective examples.  A short-range repulsion term prevents coincident points, and longer convergence runs include a perturbation-and-recovery step after stagnation.  The numerical experiments include a triangle front, a smooth quadratic decision-space map with a curved Pareto front, and a three-objective supersphere benchmark with an analytically described Pareto-front surface.  These experiments are intended to show how the layered construction changes the motion of dominated points and how the magnitude and hypervolume indicators differ once they are placed in the same nonsmooth layered-indicator framework.
\section{Related work and positioning}
The nonsmooth-optimization background for the present paper is classical Clarke analysis; for general definitions and foundational results we refer to Clarke's monograph~\cite{clarke1990}.  In multiobjective optimization, nonsmooth methods have long been studied in settings where vector-valued structure and generalized derivatives must be handled simultaneously; an early example is the interactive nonsmooth method of Miettinen and M{\"a}kel{\"a}~\cite{miettinenmakela1993}.  The present paper does not develop a full nonsmooth convergence theory.  Instead, it uses the Clarke viewpoint to interpret the chamber structure of layered set indicators and to motivate the projected set-gradient ascent schemes used in the experiments.

From the computational side, our numerical scheme is closest in spirit to methods that update an approximation set as a whole.  Direct Multisearch~\cite{custodioetal2011} is a well-known derivative-free framework for multiobjective optimization.  The first hypervolume-indicator gradient method was introduced in the gradient-based/evolutionary relay hybrid for S-metric maximization~\cite{emmerichdeutzbeume2007}, and later hypervolume-indicator gradient ascent work developed step-size control and practical ascent mechanisms for multiobjective optimization~\cite{wangdeutzbackemmerich2017}.  Uncrowded hypervolume gradient ascent~\cite{deistetal2020} further shows how a performance indicator can induce a gradient field that distributes points along a front.  The structure of hypervolume gradient fields and their zeros was studied in~\cite{emmerichdeutz2014}.  Hypervolume also appears in deterministic and exact optimization methods that compute Pareto-front representations through hypervolume scalarizations, including representation algorithms for biobjective optimization~\cite{paqueteschulzestiglmayrlourenco2022} and a greedy hypervolume polychotomic scheme for multiobjective combinatorial optimization~\cite{lopesklamrothpaquete2025}.  Our layered hypervolume construction keeps this volume-based viewpoint but extends the scalar objective to dominated layers as well as to the first front.

The second indicator used here is the magnitude indicator.  Its geometric part is tied to the notion of magnitude in metric geometry, introduced systematically by Leinster~\cite{leinster2013}.  The magnitude of anchored dominated sets used here is specialized to low-dimensional anchored regions and was developed in the companion preprint~\cite{emmerich2026magnitudedominatedsetspareto}.  The contribution of the present paper is to place the magnitude indicator next to hypervolume inside a common layered nonsmooth-indicator framework, and to study the corresponding set-gradient ascent dynamics on representative Pareto-front approximation problems.

\section{Layered set indicators}
For the theoretical development in this section, we specialize the notation from the introduction to the biobjective objective-space setting. Let $A=\{a^{(1)},\dots,a^{(\mu)}\}\subset \R^2$ be a finite approximation set for a biobjective maximization problem, ordered by Pareto dominance as defined above.

The nondomination layers of $A$ are denoted by
\[
\front{1},\front{2},\dots,\front{L},
\]
where $\front{1}$ is the first nondominated front of $A$, $\front{2}$ is the first nondominated front of $A\setminus \front{1}$, and so on.  Equivalently, nondominated sorting repeatedly peels off the current set of maximal points under Pareto dominance; every point receives the rank of the peeling step in which it is removed.  The layer construction is independent of the base indicator.  If $\mathcal I$ is any set indicator that can be evaluated on a finite layer, its finite-$\varepsilon$ layered version is
\[
\mathcal L_{\varepsilon}^{\mathcal I}(A)
 :=\sum_{\ell=1}^{L}\varepsilon^{\ell-1}\mathcal I(\front{\ell}),
 \qquad 0<\varepsilon\ll 1.
\]
In this paper the two main choices of $\mathcal I$ are the anchored hypervolume indicator and the magnitude indicator.  With anchor point $r$, the first choice is $\mathcal I(B)=\HV(B;r)=\lambda_d(\Dom_r(B))$, the top-dimensional Lebesgue measure of the anchored dominated set.  The second choice is $\mathcal I(B)=\Mag(\Dom_r(B))$, the \(\ell_1\)-magnitude of the same anchored dominated set.  When the anchor is the origin we suppress $r$ and write $\Dom(B)$, $\HV(B)$, and $\Mag(\Dom(B))$.

For a finite set $B\subseteq [0,\infty)^2$, let
\[
\Dom(B):=\bigcup_{b\in B}[0,b_1]\times[0,b_2]
\]
be its origin-anchored dominated set. This is the anchored case $r=0$. In two dimensions we use the low-dimensional magnitude formula
\[
\Mag(\Dom(B))=1+\frac{X(B)+Y(B)}{2}+\frac{\HV(B)}{4},
\]
where
\[
X(B)=\max_{b\in B} b_1,
\qquad
Y(B)=\max_{b\in B} b_2,
\]
and $\HV(B)=\operatorname{area}(\Dom(B))$ is the usual anchored hypervolume indicator, i.e. the two-dimensional Lebesgue measure of the anchored dominated set.  This formula makes the relationship between the two choices of $\mathcal I$ explicit: hypervolume is the area term, while magnitude additionally records coordinate extent.

\begin{remark}[Indicator values in Figure~\ref{fig:intro-layered-example}]
For the origin anchor and the maximization convention used in Figure~\ref{fig:intro-layered-example}, the layer values can be read off directly from the staircase areas.  The first layer consists of $(1,8)$, $(5,4)$, and $(7,3)$, hence
\[
\HV(\front{1})=1\cdot 8+(5-1)\cdot 4+(7-5)\cdot 3=30.
\]
With $X(\front{1})=7$ and $Y(\front{1})=8$, the magnitude indicator is
\[
\Mag(\Dom(\front{1}))=1+\frac{7+8}{2}+\frac{30}{4}=16.
\]
Similarly, for the second layer $(1,7)$, $(3,4)$, $(6,2)$ one obtains
\[
\HV(\front{2})=1\cdot 7+(3-1)\cdot 4+(6-3)\cdot 2=21,
\qquad
\Mag(\Dom(\front{2}))=1+\frac{6+7}{2}+\frac{21}{4}=\frac{51}{4}.
\]
For the third layer $(1,4)$, $(4,1)$,
\[
\HV(\front{3})=1\cdot 4+(4-1)\cdot 1=7,
\qquad
\Mag(\Dom(\front{3}))=1+\frac{4+4}{2}+\frac{7}{4}=\frac{27}{4}.
\]
Thus the same three layers have hypervolume values $30,21,7$ and magnitude-indicator values $16,51/4,27/4$, respectively.
\end{remark}

\begin{definition}[Lexicographic layered indicator]
For a base indicator $\mathcal I$, the lexicographic layered indicator of $A$ is
\[
I^{\mathcal I}_{\mathrm{lex}}(A):=
\bigl(\mathcal I(\front{1}),\mathcal I(\front{2}),\dots,\mathcal I(\front{L})\bigr),
\]
ordered lexicographically.  The cases used below are $\mathcal I(B)=\HV(B)$ and $\mathcal I(B)=\Mag(\Dom(B))$, with the origin anchor suppressed.
\end{definition}

\begin{definition}[Unary-encoded layered indicator]
For a base indicator $\mathcal I$, the same layer hierarchy may be formally encoded by
\[
I^{\mathcal I}_{\Ginf}(A):=
\sum_{i=1}^{L}\mathcal I(\front{i})\,\Ginf^{-i+1},
\]
where $\Ginf$ denotes Sergeyev's grossone, an infinite unit from the methodology of numerical infinities and infinitesimals~\cite{sergeyev2017numerical}.  Here the notation is used as a numerically convenient, but order-correct, scalar encoding of infinitesimals of different lexicographic orders associated with the contributions of the nondomination layers.  A worse dominance-rank layer is multiplied by a strictly lower-order infinitesimal and can therefore never compensate for deterioration in any better dominance-rank layer.  Thus the unary notation records the same strict priority hierarchy as the lexicographic formulation, without introducing finite scale-separation constants.  In this paper $\mathcal I$ is instantiated either as the anchored hypervolume indicator, the top-dimensional Lebesgue measure of the anchored dominated set, or as the magnitude indicator, the $\ell_1$-magnitude of the anchored dominated set.  For actual numerical experiments we replace $\Ginf^{-1}$ by a small positive parameter $\varepsilon$.
\end{definition}

\begin{proposition}[Equivalence of the two orderings]
For any fixed base indicator $\mathcal I$ with finite real values on all layers, the grossone-style unary layered indicator and the lexicographic layered indicator induce the same ordering on finite approximation sets. In other words, the unary form is a scalar encoding of the lexicographic hierarchy.
\end{proposition}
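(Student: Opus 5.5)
The plan is to compare two approximation sets $A$ and $B$ by their layer vectors and show that the sign of the grossone difference equals the lexicographic verdict. Let $A$ have layers $\front{1},\dots,\front{L_A}$ and $B$ have layers $B^{(1)},\dots,B^{(L_B)}$. Pad the shorter list with zeros so both vectors have length $L=\max(L_A,L_B)$. For an empty layer this is the natural convention $\mathcal I(\emptyset)=0$, since the sum in the unary form simply has no such term. Write $u_i=\mathcal I(\front{i})$, $v_i=\mathcal I(B^{(i)})$ and $d_i=u_i-v_i$, all finite reals by hypothesis. Then
\[
I^{\mathcal I}_{\Ginf}(A)-I^{\mathcal I}_{\Ginf}(B)=\sum_{i=1}^{L} d_i\,\Ginf^{-i+1},
\]
so it suffices to show that this grossone number is positive, zero or negative according as $(d_1,\dots,d_L)$ is lexicographically positive, zero or negative.

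The zero case is immediate. If all $d_i=0$, the difference is $0$. Conversely, a grossone expansion with finitely many finite real coefficients at distinct powers $\Ginf^{0},\Ginf^{-1},\dots$ is zero only if every coefficient is zero; this is the uniqueness of the positional record in Sergeyev's numeral system.

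For the strict case, let $k$ be the first index with $d_k\neq 0$, and say $d_k>0$. The difference is then $\Ginf^{-k+1}\bigl(d_k+\sum_{i>k} d_i\Ginf^{-(i-k)}\bigr)$. The tail is a finite sum of finite reals times infinitesimals, so it is infinitesimal. In particular, its absolute value is below $|d_k|/2$, because $|d_i|\Ginf^{-(i-k)}<|d_k|/(2L)$ for each $i>k$; this holds since $\Ginf$ exceeds every finite number, such as $2L|d_i|/|d_k|$. The bracket is therefore positive, and multiplying by the positive quantity $\Ginf^{-k+1}$ preserves the sign. The case $d_k<0$ is symmetric. Hence the two orderings agree on every pair, including indifference, and so they induce the same total preorder.

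The main obstacle is this infinitesimal-domination step, because the argument must rest on the axioms of the grossone arithmetic: $\Ginf$ is larger than any finite number, and the usual ordered-field operations are compatible with it. It also needs finiteness of the coefficients, which is exactly why the proposition assumes finite real values on all layers. I would state these properties explicitly as the facts being used.

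As a sanity remark, I would also note what happens when $\Ginf^{-1}$ is replaced by a real $\varepsilon$, as in $\mathcal L^{\mathcal I}_\varepsilon$. The same estimate then works only for $\varepsilon$ small relative to the ratios $|d_i|/|d_k|$. This shows that agreement holds uniformly only in the infinitesimal encoding, which is the point of the proposition.
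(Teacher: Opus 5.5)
Your proposal is correct and follows the same route as the paper's proof. The paper takes the first index at which the layer-value vectors differ and observes that the term of order $\Ginf^{-k+1}$ determines the sign of the difference; you make that dominance explicit with the bound $|d_i|\Ginf^{-(i-k)}<|d_k|/(2L)$. Your zero-padding of the shorter layer vector is a detail the paper leaves implicit, and it is exactly the convention under which the lexicographic order matches the unary sum. A dictionary order in which a proper prefix counts as smaller would disagree, for example when a deeper layer has hypervolume $0$.
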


\begin{proof}
If two lexicographic vectors first differ in component $j$, then the corresponding difference in
\[
\sum_{i=1}^{L}\mathcal I(\front{i})\,\Ginf^{-i+1}
\]
has leading nonzero term of order $\Ginf^{-j+1}$. That leading term determines the sign of the difference, and therefore the induced ordering agrees with the lexicographic one.
\end{proof}

\begin{remark}
For theory, the lexicographic formulation is cleaner. For numerical work, a finite surrogate of the form
\[
\sum_{i=1}^{L}\varepsilon^{i-1}\mathcal I(\front{i})
\]
with $0<\varepsilon\ll 1$ is easier to implement.  In the magnitude case $\mathcal I(\front{i})=\Mag(\Dom(\front{i}))$; in the hypervolume case $\mathcal I(\front{i})=\HV(\front{i})$.  The practical effect is that dominated points still receive an ascent signal, but only after all higher-priority layers have been respected.  Thus deeper layers can be interpreted as providing a controlled \emph{pulling-back} contribution for dominated points, in the same broad spirit in which hypervolume-gradient methods produce improvement and distribution fields for approximation sets~\cite{emmerichdeutzbeume2007,deistetal2020}.
\end{remark}

\subsection{Magnitude gradients from projected hypervolume gradients}
\label{subsec:magnitude-gradient-projections}

The magnitude indicator also admits a useful gradient decomposition in arbitrary fixed dimension.  Let $B=\{b^{(1)},\dots,b^{(m)}\}\subset [0,\infty)^d$ and write $[d]:=\{1,\dots,d\}$.  For every nonempty coordinate subset $S\subseteq [d]$, let
\[
P_S:\R^d\to\R^{|S|}
\]
be the coordinate projection onto the coordinates in $S$, and let $P_S B:=\{P_S b:b\in B\}$ be the corresponding shadow of $B$.  Denote by $\HV_S(P_S B)$ the origin-anchored hypervolume of this shadow in $\R^{|S|}$.  The finite-dimensional magnitude expansion can then be written as
\[
\Mag_d(\Dom(B))
=
1+
\sum_{\varnothing\neq S\subseteq [d]}
2^{-|S|}\,\HV_S(P_S B).
\]
For $d=3$ this is exactly the decomposition into length, projected-area, and volume terms,
\[
\Mag_3(\Dom(B))
=
1+\frac12 L(B)+\frac14 A(B)+\frac18 V(B),
\]
where
\[
L(B)=\sum_{|S|=1}\HV_S(P_S B),
\qquad
A(B)=\sum_{|S|=2}\HV_S(P_S B),
\qquad
V(B)=\HV_{\{1,2,3\}}(B).
\]

On a differentiability chamber, the set-gradient with respect to the point $b^{(i)}$ is therefore obtained by differentiating the shadow hypervolumes and lifting their gradients back to the full coordinate space:
\[
\nabla_{b^{(i)}}\Mag_d(\Dom(B))
=
\sum_{\varnothing\neq S\subseteq [d]}
2^{-|S|}\,
P_S^{\top}
\nabla_{P_S b^{(i)}}\HV_S(P_S B).
\]
Here $P_S^{\top}:\R^{|S|}\to\R^d$ denotes the adjoint insertion map, i.e. it places the shadow-gradient components into the coordinates in $S$ and fills the remaining coordinates with zeros.  In three dimensions this says that the magnitude gradient is a linear combination of the lifted gradients of the one-dimensional length shadows, the two-dimensional area shadows, and the three-dimensional hypervolume term:
\[
\nabla_{b^{(i)}}\Mag_3(\Dom(B))
=
\frac12\nabla_{b^{(i)}}L(B)
+
\frac14\nabla_{b^{(i)}}A(B)
+
\frac18\nabla_{b^{(i)}}V(B).
\]
For the whole labelled set, the gradient is the block vector
\[
\nabla_B\Mag_d(\Dom(B))
=
\bigl(
\nabla_{b^{(1)}}\Mag_d(\Dom(B)),\dots,
\nabla_{b^{(m)}}\Mag_d(\Dom(B))
\bigr),
\]
and its scalar magnitude can be measured, for example, by the Frobenius norm
\[
\bigl\|\nabla_B\Mag_d(\Dom(B))\bigr\|_F
=
\left(
\sum_{i=1}^{m}
\bigl\|\nabla_{b^{(i)}}\Mag_d(\Dom(B))\bigr\|_2^2
\right)^{1/2}.
\]
At nonsmooth configurations, where the active orthogonal-union geometry changes or where a point becomes tied in a projected shadow, the formula is used chamberwise.  Neighboring one-sided gradients motivate the local direction field, while the numerical implementation uses finite-difference derivatives of the chosen finite-$\varepsilon$ scalar surrogate rather than an explicit generalized-derivative computation.

This representation is also computationally convenient.  If $T(m,k)$ denotes the time needed to compute a $k$-dimensional hypervolume gradient for $m$ points, then the projected formula requires
\[
\sum_{k=1}^{d}\binom{d}{k}T(m,k)
\]
work, since it evaluates one hypervolume-gradient computation for each of the $2^d-1$ nonempty coordinate shadows.  For fixed objective-space dimension $d$, this is only a constant-factor overhead relative to the ordinary $d$-dimensional hypervolume-gradient computation; the magnitude gradient is simply a linear composition of hypervolume gradients in dimensions $1,\dots,d$.  Thus, in the fixed-dimensional setting considered in this paper, computing the gradient of the magnitude indicator has the same asymptotic time complexity in the number of points as computing the hypervolume gradient, up to this projection factor.

For the layered magnitude indicator, this projected-gradient formula is applied layer by layer.  If $a^{(i)}$ belongs to layer $\front{\ell(i)}$ and the layer structure is fixed, then
\[
\nabla_{a^{(i)}}\mathcal L_{\varepsilon}^{\Mag}(A)
=
\varepsilon^{\ell(i)-1}
\nabla_{a^{(i)}}\Mag_d\bigl(\Dom(\front{\ell(i)})\bigr),
\]
with the right-hand side again evaluated through the projected hypervolume gradients of the shadows of that layer.

\section{Piecewise-smooth structure of layered indicators}
The layered indicator is only piecewise smooth. There are two distinct sources of nonsmoothness.

First, the layer structure itself changes combinatorially when two points exchange dominance status. Second, even within a fixed layer, the functions $X(B)$, $Y(B)$, and $\HV(B)$ are only piecewise smooth because different points become extremal or active in the staircase representation of the dominated region. Consequently, the layered $\mathcal I$-indicator, in lexicographic or finite-$\varepsilon$ scalar form, is naturally associated with nonsmooth optimization.  For $\mathcal I=\Mag$, the low-dimensional formula includes additional nonsmooth maximum or projected-extent terms such as $X(B)$ and $Y(B)$, while for $\mathcal I=\HV$ the nonsmoothness comes from the active-staircase geometry of the orthogonal union.

\begin{remark}[Role of Clarke-style nonsmooth analysis]
The role of Clarke-style nonsmooth analysis in the present work is primarily structural and interpretive.  The layered indicators are smooth only on regions where the nondomination stratification and the active orthogonal-union geometry of the corresponding dominated sets remain fixed.  Across boundaries of such regions, classical gradients may fail to exist, and neighboring smooth gradients need not agree.  For finite-$\varepsilon$ surrogates that are locally Lipschitz on fixed layer chambers, Clarke calculus provides a natural language for describing such transitions and for interpreting directions obtained from nearby smooth pieces.  The numerical method used below, however, should not be read as an algorithm that explicitly computes Clarke subdifferentials or Clarke normal cones.  It is a projected finite-difference set-gradient ascent scheme for layered set indicators on a stratified nonsmooth landscape, motivated by the local piecewise-smooth structure.
\end{remark}

\begin{remark}[Finite differences at layer and geometry boundaries]
The finite-$\varepsilon$ scalarization is the real-valued objective used in the finite-difference implementation,
\[
J_\varepsilon(A)=\sum_{\ell\ge 1}\varepsilon^{\ell-1}\mathcal I(L_\ell(A)),
\]
where $L_\ell(A)$ denotes the $\ell$-th nondomination layer and $\mathcal I$ is the chosen base indicator, with the repulsion term added in the numerical runs when stated.  On a smooth chamber, where both the layer assignment and the active dominated-set geometry are fixed, symmetric finite differences approximate the ordinary derivative, equivalently the ordinary gradient with respect to the point coordinates, of this scalar surrogate.  At boundaries where only the active geometry changes, for example when orthogonal-union cells or projected shadow cells change, the objective is typically continuous but nonsmooth; the finite-difference direction samples neighboring smooth formulas and acts as a local regularization of the piecewise-smooth gradient field.  At boundaries where the nondomination layers themselves change, the hard layer assignment can cause value jumps, because a point may suddenly be weighted by $1$, $\varepsilon$, $\varepsilon^2$, and so on.  Near such a boundary, a finite-difference quotient may therefore reflect the layer-switching jump rather than a classical derivative.  In the projected normalized implementation this does not produce an unbounded step, but it may create a strong directional signal toward the chamber with the better layer rank.  Thus the method is best understood as projected finite-difference ascent on a stratified, partly nonsmooth and partly discontinuous surrogate, rather than as an exact Clarke-subgradient method for the full hard-layer scalarization.
\end{remark}

To keep the notation simple in the theoretical biobjective analysis, let
\[
J^{\mathcal I}_{\varepsilon,\tau}(A):=\sum_{i=1}^{L}\varepsilon^{i-1}\mathcal I(\front{i})-\tau R_\sigma(A),
\]
where the base indicator is either
\[
\mathcal I(B)=\HV(B),
\qquad\text{or}\qquad
\mathcal I(B)=\Mag(\Dom(B)).
\]
The origin anchor is suppressed in this notation, and $R_\sigma$ is a short-range repulsion term.  When the choice of base indicator is clear, we abbreviate $J^{\mathcal I}_{\varepsilon,\tau}$ to $J_{\varepsilon,\tau}$.

\begin{definition}[Short-range repulsion]
For $\sigma>0$ define
\[
R_\sigma(A):=\sum_{1\le i<j\le \mu}\exp\!\Bigl(-\frac{\|a^{(i)}-a^{(j)}\|_2^2}{\sigma^2}\Bigr).
\]
The term $-\tau R_\sigma(A)$ penalizes nearly coincident points and prevents exact duplicates in the numerical iterations.
\end{definition}

The repulsion term is not part of the theoretical layered indicator. It is introduced only as a numerical regularizer.

\begin{proposition}[Local smoothness on fixed combinatorial strata]
Fix $\mu$ and consider configurations $A=(a^{(1)},\dots,a^{(\mu)})\in(\R^2)^\mu$. For either base indicator $\mathcal I\in\{\HV,\Mag\circ\Dom\}$, the map $A\mapsto J^{\mathcal I}_{\varepsilon,\tau}(A)$ is smooth on every region of configuration space on which
\begin{enumerate}[label=(\roman*)]
\item the membership of every point in every nondomination layer is fixed,
\item the staircase combinatorics defining $\HV(\front{\ell})$ are fixed for each layer, and
\item in the magnitude case, the points realizing the projected extents $X(\front{\ell})$ and $Y(\front{\ell})$ are fixed for each layer.
\end{enumerate}
\end{proposition}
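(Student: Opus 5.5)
We write $J^{\mathcal I}_{\varepsilon,\tau}$ on such a region as a finite sum of explicit smooth functions. Each summand is either polynomial in the coordinates or a smooth function of them. The repulsion term $R_\sigma$ is a finite sum of Gaussians of the squared Euclidean distances, so it is $C^\infty$ on all of $(\R^2)^\mu$, regardless of strata. Hence it suffices to show that for each fixed layer index $\ell$ the map $A\mapsto\mathcal I(\front{\ell})$ is smooth on the region. A finite linear combination with the constant weights $\varepsilon^{\ell-1}$ and $-\tau$ is then smooth.

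By (i), the index set $\{i: a^{(i)}\in\front{\ell}\}$ does not vary over the region, so $\front{\ell}$ is a fixed subfamily of the coordinate variables. The number of layers $L$ is also constant. Write $\front{\ell}=\{a^{(i_1)},\dots,a^{(i_k)}\}$. By (ii), the staircase combinatorics are fixed. I will make this precise as follows: the set of points that contribute to the staircase, together with their order by the first coordinate, say $a^{(i_{\pi(1)})}_1<\dots<a^{(i_{\pi(k)})}_1$, does not change. Within a nondominated layer in two dimensions, that order is automatically reversed in the second coordinate.

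On the region, the origin-anchored area is then given by the single explicit formula
\[
\HV(\front{\ell})=\sum_{j=1}^{k}\bigl(a^{(i_{\pi(j)})}_1-a^{(i_{\pi(j-1)})}_1\bigr)\,a^{(i_{\pi(j)})}_2,
\]
with the convention $a^{(i_{\pi(0)})}_1:=0$. This is a polynomial in the coordinates, hence smooth. Points that are inactive under (ii), for instance because they lie below the anchor, contribute a term that is identically zero on the region. In the magnitude case, (iii) says that $X(\front{\ell})=a^{(p)}_1$ and $Y(\front{\ell})=a^{(q)}_2$ for fixed indices $p,q$. The maxima thus reduce to coordinate functions, which are linear. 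By the two-dimensional formula $\Mag(\Dom(B))=1+\tfrac12(X+Y)+\tfrac14\HV$, the magnitude of each layer is a polynomial in the coordinates as well.

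The main point needing care is the interpretation of "region" and of hypothesis (ii). The claim is that, on the set where the combinatorial data are constant, the indicator coincides with one fixed polynomial. Smoothness should be read on the interior of that set, since smoothness is a local notion on open sets. Alternatively, it can be read in the sense that $J$ is the restriction of a smooth function. I would state the result in the latter form: $J$ agrees on the stratum with a globally $C^\infty$ function, namely the polynomial plus $-\tau R_\sigma$. This gives smoothness on the interior, and one-sided smoothness up to the boundary of the stratum.

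I would also check that ties do not break the formula. Equal first coordinates inside a layer could make the ordering $\pi$ ambiguous. At such a tie, however, the offending difference term is zero, and adjacent orderings yield polynomials that agree there. So fixing any admissible ordering consistent with (ii) gives a valid representative.

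The final gradient is then block-diagonal across layers. Each point's gradient is weighted by $\varepsilon^{\ell(i)-1}$, in agreement with the layerwise formula stated in Section~\ref{subsec:magnitude-gradient-projections}.
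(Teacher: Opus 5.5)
Your proposal is correct and follows the paper's proof: a fixed index set per layer, a fixed polynomial staircase formula for $\HV(\front{\ell})$, affine extents $X$ and $Y$ in the magnitude case, and a repulsion term smooth everywhere, combined with constant weights. The extra care you take (the explicit staircase sum, which should run only over the active points as you say in words, and reading smoothness as agreement with a globally smooth function on the stratum) refines the paper's argument without changing it.
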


\begin{proof}
On such a region, each layer $\front{\ell}$ is represented by a fixed subset of indices. In two dimensions, once the staircase order of the active points is fixed, the anchored hypervolume $\HV(\front{\ell})$ is given by a fixed polynomial expression in the relevant coordinates. Hence the layered-hypervolume part is smooth. In the magnitude case, the additional terms $X(\front{\ell})$ and $Y(\front{\ell})$ are coordinate projections of fixed points on the stated region and are therefore affine; thus $\Mag(\Dom(\front{\ell}))=1+\tfrac12(X+Y)+\tfrac14\HV$ is smooth as well. The repulsion term is smooth everywhere, so the weighted sum defining $J^{\mathcal I}_{\varepsilon,\tau}$ is smooth for either choice of $\mathcal I$.
\end{proof}

The preceding proposition is a stratified smoothness statement. For actual estimates it is useful to record a chamberwise Lipschitz theorem that does \emph{not} require the active staircase combinatorics to remain fixed.

\begin{theorem}[Lipschitz continuity on chambers with fixed layer membership]
Fix $M>0$ and let $K_M:=([0,M]^2)^\mu$. Let $\mathcal P=(I_1,\dots,I_L)$ be an ordered partition of $\{1,\dots,\mu\}$ and let $\mathcal C_{\mathcal P}\subset K_M$ denote the set of configurations whose nondomination layers are exactly indexed by $I_1,\dots,I_L$. For either base indicator $\mathcal I\in\{\HV,\Mag\circ\Dom\}$, the restriction of $J^{\mathcal I}_{\varepsilon,\tau}$ to $\mathcal C_{\mathcal P}$ is Lipschitz on $K_M\cap \mathcal C_{\mathcal P}$ with respect to the product $\ell^1$ norm
\[
\|A-B\|_{1,\mu}:=\sum_{i=1}^\mu \|a^{(i)}-b^{(i)}\|_1.
\]
More precisely, there exists a constant $L=L(M,\mu,\varepsilon,\tau,\sigma)$ such that
\[
|J^{\mathcal I}_{\varepsilon,\tau}(A)-J^{\mathcal I}_{\varepsilon,\tau}(B)|\le L\,\|A-B\|_{1,\mu}
\qquad\text{for all }A,B\in K_M\cap \mathcal C_{\mathcal P}.
\]
\end{theorem}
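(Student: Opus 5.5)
The plan is to split $J^{\mathcal I}_{\varepsilon,\tau}$ into its finitely many pieces and bound each one with a Lipschitz estimate that holds on all of $K_M$. The point is that once layer membership is fixed, no single piece depends on the layer structure any more. On $\mathcal C_{\mathcal P}$ we have $\front{\ell}=\{a^{(i)}:i\in I_\ell\}$ for every configuration. Hence
\[
J^{\mathcal I}_{\varepsilon,\tau}(A)=\sum_{\ell=1}^{L}\varepsilon^{\ell-1}\,\mathcal I\bigl(\{a^{(i)}\}_{i\in I_\ell}\bigr)-\tau R_\sigma(A),
\]
and each summand is a fixed function $g_\ell$ of the coordinates indexed by $I_\ell$. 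We never need the staircase combinatorics to be fixed, because we work with globally Lipschitz functions on $K_M$ rather than with gradients. It then suffices to show three things: $\HV$ of a fixed labelled set of points in $[0,M]^2$ is Lipschitz in the product $\ell^1$ norm, $X$ and $Y$ are Lipschitz, and $R_\sigma$ is Lipschitz on $K_M$. Summing the constants with weights $\varepsilon^{\ell-1}\le 1$ (or $\max(1,\varepsilon^{L-1})$) and $\tau$ gives $L(M,\mu,\varepsilon,\tau,\sigma)$.

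For the extent terms, $X(B)=\max_{i\in I_\ell}b_1^{(i)}$ is a maximum of coordinate functions. A maximum of $1$-Lipschitz functions is $1$-Lipschitz, so
\[
|X(A)-X(B)|\le\max_i|a^{(i)}_1-b^{(i)}_1|\le\|A-B\|_{1,\mu},
\]
and the same holds for $Y$.

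For hypervolume I would use a symmetric-difference argument rather than the piecewise polynomial formula, since the latter breaks when the staircase changes. For two labelled sets with the same index set, the triangle inequality for unions gives
\[
|\HV(A_\ell)-\HV(B_\ell)|\le\lambda_2\bigl(\Dom(A_\ell)\,\triangle\,\Dom(B_\ell)\bigr)\le\sum_{i\in I_\ell}\lambda_2\bigl([0,a^{(i)}]\,\triangle\,[0,b^{(i)}]\bigr).
\]
Each box symmetric difference is bounded by $M\,\|a^{(i)}-b^{(i)}\|_1$. To see this, change one coordinate at a time: moving $a_1$ changes the box by a strip of width $|a_1-b_1|$ and height at most $M$. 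This yields Lipschitz constant $M$, or $1/4\cdot M$ plus $1/2\cdot 2$ in the $\Mag$ case.

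For the repulsion term, the map $t\mapsto e^{-t/\sigma^2}$ composed with the squared distance is smooth. On the compact convex set $K_M$ its gradient is bounded by $2\|a^{(i)}-a^{(j)}\|_2/\sigma^2\le 2\sqrt2 M/\sigma^2$. The mean value theorem on the convex set $K_M$ therefore gives a Lipschitz bound with constant of order $\mu M/\sigma^2$ in the $\ell^1$ norm, using $\|\cdot\|_2\le\|\cdot\|_1$. The only subtle point, and the one I would check most carefully, is that $\mathcal C_{\mathcal P}$ itself need not be convex or connected. For this reason I prove each estimate globally on $K_M$ for the fixed-index representation, and only then restrict to $\mathcal C_{\mathcal P}$. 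This avoids any path argument inside the chamber, and the restriction inherits the Lipschitz bound.
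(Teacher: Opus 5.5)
Your proposal is correct and follows essentially the same route as the paper: fixed index sets on the chamber, the $1$-Lipschitz maximum for $X$ and $Y$, the union/symmetric-difference bound with the strip estimate $M\|a^{(i)}-b^{(i)}\|_1$ for each anchored box, and smoothness of $R_\sigma$ on the compact convex set $K_M$. The only deviations are cosmetic. You get a slightly cruder extent constant ($1$ rather than the paper's $\tfrac12$ in front of the $\ell^1$ distance). You also state explicitly that the estimates are proved globally on $K_M$ before restricting to $\mathcal C_{\mathcal P}$, which the paper's argument uses only implicitly.
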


\begin{proof}
We first estimate both base indicators on a fixed labelled subset. Let $B=(b^{(1)},\dots,b^{(m)})$ and $C=(c^{(1)},\dots,c^{(m)})$ be two labelled $m$-point configurations in $([0,M]^2)^m$. For the magnitude terms, the maximum is $1$-Lipschitz with respect to the $\ell^1$ norm, so
\[
|X(B)-X(C)|\le \sum_{i=1}^m |b^{(i)}_1-c^{(i)}_1|,
\qquad
|Y(B)-Y(C)|\le \sum_{i=1}^m |b^{(i)}_2-c^{(i)}_2|.
\]
For the hypervolume term, write $Q(p):=[0,p_1]\times[0,p_2]$ for the anchored box generated by $p\in[0,M]^2$. Then
\[
\Dom(B)=\bigcup_{i=1}^m Q\bigl(b^{(i)}\bigr),
\qquad
\Dom(C)=\bigcup_{i=1}^m Q\bigl(c^{(i)}\bigr).
\]
The symmetric difference of the two unions is contained in the union of the individual symmetric differences, hence
\[
\bigl|\HV(B)-\HV(C)\bigr|
\le \operatorname{area}\bigl(\Dom(B)\,\triangle\,\Dom(C)\bigr)
\le \sum_{i=1}^m \operatorname{area}\Bigl(Q\bigl(b^{(i)}\bigr)\,\triangle\,Q\bigl(c^{(i)}\bigr)\Bigr).
\]
For each pair of anchored boxes one has
\[
\operatorname{area}\Bigl(Q\bigl(b^{(i)}\bigr)\,\triangle\,Q\bigl(c^{(i)}\bigr)\Bigr)
\le M\,\bigl\|b^{(i)}-c^{(i)}\bigr\|_1,
\]
because changing the $x$-extent by $\delta_x$ and the $y$-extent by $\delta_y$ can alter the box only by vertical and horizontal strips of total area at most $M(|\delta_x|+|\delta_y|)$. Therefore
\[
\bigl|\HV(B)-\HV(C)\bigr|\le M\sum_{i=1}^m \bigl\|b^{(i)}-c^{(i)}\bigr\|_1.
\]
The hypervolume estimate above already gives the required Lipschitz bound for $\mathcal I=\HV$. Combining it with the two maximum estimates gives the magnitude bound
\[
\bigl|\Mag(\Dom(B)) - \Mag(\Dom(C))\bigr|
\le \Bigl(\frac12+\frac{M}{4}\Bigr)\sum_{i=1}^m \bigl\|b^{(i)}-c^{(i)}\bigr\|_1.
\]
Now let $A,B\in K_M\cap\mathcal C_{\mathcal P}$. Since $A$ and $B$ have the same layer membership pattern, the $\ell$-th layers are obtained from the same index set $I_\ell$ in both configurations. Applying the corresponding estimate layer by layer and summing with weights $\varepsilon^{\ell-1}$ yields a Lipschitz bound for either the layered-hypervolume part or the layered-magnitude part. Finally, the repulsion term $R_\sigma$ is smooth on the compact set $K_M$, hence Lipschitz there. Adding the two bounds proves the claim.
\end{proof}

\begin{proposition}[The hard-layer scalarization is not continuous across layer switches]
Fix $\varepsilon>0$ and ignore the repulsion term for simplicity. For either base indicator $\mathcal I\in\{\HV,\Mag\circ\Dom\}$, the map
\[
A\mapsto \sum_{\ell}\varepsilon^{\ell-1}\mathcal I(\front{\ell}(A))
\]
need not be continuous at configurations where the nondomination layers change.
\end{proposition}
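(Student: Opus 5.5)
The plan is to exhibit an explicit two-point counterexample in $\R^2$, as announced in the abstract. I take $A_t=\{a^{(1)},a^{(2)}(t)\}$ with $a^{(1)}=(1,1)$ fixed and $a^{(2)}(t)=(1+t,1)$ for $t\in(-\delta,\delta)$. I then compute the hard-layer scalarization $J(t):=\sum_\ell \varepsilon^{\ell-1}\mathcal I(\front{\ell}(A_t))$ on each side of $t=0$ and show that the one-sided limits differ.

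First I determine the layers. For $t<0$, the point $a^{(1)}$ strictly dominates $a^{(2)}(t)$, so $\front{1}=\{(1,1)\}$ and $\front{2}=\{(1+t,1)\}$. For $t>0$, the roles swap: $\front{1}=\{(1+t,1)\}$ and $\front{2}=\{(1,1)\}$. At $t=0$ the two points coincide and form one layer.

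Next I evaluate the objective on each side.

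\textbf{Hypervolume.} For $t<0$,
\[
J(t)=1+\varepsilon(1+t)\to 1+\varepsilon \quad\text{as } t\uparrow 0 .
\]
For $t>0$, $J(t)=(1+t)+\varepsilon$, which tends to $1+\varepsilon$ as $t\downarrow 0$. At $t=0$, $J(0)=\HV(\{(1,1)\})=1$. The one-sided limits agree with each other, but $J(0)=1\neq 1+\varepsilon$, so $J$ is discontinuous at $t=0$.

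\textbf{Magnitude.} I use the two-dimensional formula $\Mag(\Dom(\{p\}))=1+\tfrac12(p_1+p_2)+\tfrac14p_1p_2$. At $p=(1,1)$ this gives $\tfrac94$. Both one-sided limits equal $\tfrac94(1+\varepsilon)$, while $J(0)=\tfrac94$. Again $J$ is discontinuous.

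The one subtlety is the convention at coincident points. If one prefers to treat the set at $t=0$ as a multiset with a single layer containing both copies, the dominated set is still a single box. Either way the value $\varepsilon\,\mathcal I(\{(1,1)\})$ is lost, so the argument is robust to that convention.

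To make the example even cleaner, and to avoid relying on coincident points, I would also present a non-coincident variant. Take $a^{(1)}=(2,1)$ and $a^{(2)}(t)=(1,1+t)$. For $t<0$, $a^{(2)}$ is dominated by $a^{(1)}$, which gives two layers and
\[
J=2+\varepsilon(1+t)\to 2+\varepsilon .
\]
For $t>0$, the two points are mutually nondominated and form one layer with $\HV=2+t$. This tends to $2$, so there is a genuine jump of size $\varepsilon$.

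The magnitude version of this variant also jumps. For $t<0$, $J=2+\tfrac54\varepsilon+O(t)$, which tends to $2+\tfrac94\varepsilon$. For $t>0$, we have $X=2$, $Y=1+t$ and $\HV=2+t$, so $J=1+\tfrac{3+t}{2}+\tfrac{2+t}{4}\to 3$. At $t<0$ the first layer $\{(2,1)\}$ has magnitude $1+\tfrac32+\tfrac12=3$, so the left limit is $3+\tfrac94\varepsilon\ne 3$.

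The only step needing care is checking the layer assignments and the magnitude arithmetic. No step is a real obstacle; the main choice is selecting a configuration where a layer disappears, so that the $\varepsilon$-weighted term is lost discontinuously.
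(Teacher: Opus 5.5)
Your proof is correct, and your non-coincident variant is essentially the paper's own counterexample. The paper takes $a=(1,\tfrac12)$ and $b_0=(\tfrac{9}{10},\tfrac12)$, so that $a$ dominates $b_0$ through a tie in the second coordinate. It then lifts $b$ to $(\tfrac{9}{10},\tfrac12+\eta)$. The second layer disappears, while the absorbed point adds only $O(\eta)$ to the first-layer indicator, so the term $\varepsilon\,\mathcal I(\{b_0\})$ is lost. This is exactly your $(2,1)$, $(1,1+t)$ construction.

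Your primary example shows a genuinely different phenomenon. There the two one-sided limits agree, and only the value at the collision configuration drops. It is valid under the paper's definitions: strict dominance requires $y\neq z$, so coincident points are mutually nondominated and share the first layer.

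However, your claim that it is robust to the convention is too strong. A sorting rule that breaks ties by index and places one duplicate in the next layer would give $J(0)=(1+\varepsilon)\,\mathcal I(\{(1,1)\})$, which makes $J$ continuous at $t=0$. Collision configurations are also exactly what the paper's repulsion term is meant to exclude. The paper's example, and your variant, gives a genuine jump between one-sided limits at a configuration of distinct points, independent of any tie-handling convention. That is the one to lead with.

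Finally, fix the slip in the magnitude computation for $t<0$. The correct value is $J=3+\varepsilon\bigl(\tfrac{9}{4}+\tfrac{3}{4}t\bigr)\to 3+\tfrac{9}{4}\varepsilon$, not $2+\tfrac{5}{4}\varepsilon+O(t)$. Your last sentence already states the correct left limit, and your right limit $3$ is correct.
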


\begin{proof}
Consider the two-point configuration $A_0=\{a,b_0\}$ with
\[
a=(1,\tfrac12),
\qquad
b_0=(\tfrac{9}{10},\tfrac12).
\]
Here $a$ dominates $b_0$, so the first layer is $\{a\}$ and the second layer is $\{b_0\}$. Hence
\[
J^{\mathcal I}_{\varepsilon,0}(A_0)=\mathcal I(\{a\}) + \varepsilon\,\mathcal I(\{b_0\}).
\]
For $\eta>0$ let
\[
b_\eta=(\tfrac{9}{10},\tfrac12+\eta),
\qquad
A_\eta=\{a,b_\eta\}.
\]
Then $a$ and $b_\eta$ are nondominated, so the whole configuration forms a single first layer and
\[
J^{\mathcal I}_{\varepsilon,0}(A_\eta)=\mathcal I(\{a,b_\eta\}).
\]
As $\eta\downarrow 0$, the anchored dominated set of $\{a,b_\eta\}$ converges to the anchored dominated set of $\{a\}$, both in hypervolume and in $\ell_1$-magnitude. Therefore
\[
\lim_{\eta\downarrow 0} J^{\mathcal I}_{\varepsilon,0}(A_\eta)=\mathcal I(\{a\}).
\]
Since $\mathcal I(\{b_0\})>0$ for both the hypervolume and magnitude indicators, this limit is strictly smaller than $J^{\mathcal I}_{\varepsilon,0}(A_0)$. Thus the hard-layer finite-$\varepsilon$ scalarization has a downward jump at $A_0$ and is not continuous there for either choice of base indicator.
\end{proof}

\begin{remark}
The theorem and counterexample together identify the real regularity picture. On chambers with fixed layer membership one has standard Lipschitz control and may invoke Clarke calculus in the usual way. Across layer-switching boundaries, however, the hard-layer finite-$\varepsilon$ scalarization can jump and therefore is not a locally Lipschitz function on the full configuration space. A full-space Clarke analysis is therefore unavailable for this hard-layer scalar surrogate; the discontinuity arises precisely from the abrupt reassignment of points between layers when dominance relations change.
\end{remark}

\section{Nonsmooth set-gradient ascent}
Let $\Omega\subset \R^2$ be a closed feasible region in objective space and let $\Omega^\mu$ be the feasible set for approximation sets of size $\mu$. There are two closely related algorithmic viewpoints.

The first is an \emph{idealized} projected Clarke-ascent step for a layered scalar surrogate $J_{\varepsilon,\tau}$, either the magnitude version or the hypervolume version.  The term set-gradient refers to the gradient, generalized gradient, or finite-difference direction with respect to all point coordinates of the approximation set.

\begin{definition}[Ideal projected Clarke-ascent step]
Given a current approximation set $A^k=(a^{k,(1)},\dots,a^{k,(\mu)})\in\Omega^\mu$, choose a generalized gradient
\[
g^k=(g^{k,(1)},\dots,g^{k,(\mu)})\in \clarke J_{\varepsilon,\tau}(A^k).
\]
For the normalized pointwise variant, define
\[
\widehat g^{k,(i)}=
\begin{cases}
\dfrac{g^{k,(i)}}{\|g^{k,(i)}\|_2}, & g^{k,(i)}\neq 0,\\[1ex]
0, & g^{k,(i)}=0,
\end{cases}
\qquad i=1,\dots,\mu,
\]
and set
\[
\widehat g^k=(\widehat g^{k,(1)},\dots,\widehat g^{k,(\mu)}).
\]
The projected update is then
\[
A^{k+1}=\Pi_{\Omega^\mu}\bigl(A^k+\alpha_k \widehat g^k\bigr),
\]
where $\Pi_{\Omega^\mu}$ denotes Euclidean projection onto the feasible set and $\alpha_k>0$ is chosen either by backtracking or as a small diffusion step size.
\end{definition}

The second viewpoint is the one actually used in the small biobjective experiments: a projected \emph{finite-difference} set-gradient ascent scheme motivated by the preceding Clarke picture. In a full nonsmooth analysis one would work directly with the Clarke subdifferential. For the small experiments below, however, we use symmetric finite differences to construct a direction field, then normalize pointwise when indicated, and finally project back to the feasible set. In the curved-front experiment this pointwise normalization prevents a single large local gradient from dominating the motion of the whole approximation set and produces a small-step gradient-diffusion dynamics that is easier to visualize and numerically more robust in the presence of temporary near-collisions.

For later reproducibility it is useful to separate the numerical core from the surrounding plotting or interactive code. The method used throughout the paper is summarized in Appendix~\ref{app:algorithm}, where we give commented pseudocode, explain all symbols before the algorithm, and provide a step-by-step walkthrough of one iteration.

\medskip
\noindent\textbf{Implemented finite-difference ascent scheme.}
The numerical method associated with the figures is the following: start from a feasible configuration, evaluate the chosen layered surrogate, build a set-gradient direction field either by symmetric finite differences or by an explicit indicator-gradient routine, optionally normalize the direction pointwise, take a small ascent step, and project the result back to the feasible set. Appendix~\ref{app:algorithm} records this scheme in pseudocode and makes explicit the two viewpoints used in the paper: objective-space optimization, where the state variable is the approximation set itself, and decision-space optimization, where the layered objective is evaluated only after mapping the decision points into objective space.

\subsection*{How the problem is solved numerically in practice}
In the present computations the optimization variables are the coordinates of the $\mu$ points themselves. For objective-space examples the state variable is directly the approximation set $A\in\Omega^\mu$. For decision-space examples the state variable is a decision configuration $X=(x^{(1)},\dots,x^{(\mu)})$, the objective vectors are $F(x^{(i)})$, and the layered objective is evaluated after mapping the decision points into objective space. The generic implementation uses a fixed finite-difference radius $h$, a step size $\alpha$, optional pointwise normalization, and Euclidean projection onto the feasible set; see Appendix~\ref{app:algorithm} for the exact pseudocode and default constants used in the reference implementation.

The small-step normalized variant is particularly convenient for visualization. In the numerical examples it first moves the points toward the first front and then lets the repulsion term separate them along that front. For nonlinear fronts, such as the quadratic ten-point example below, this produces a visible front-following diffusion and turns out to be quite robust.

\section{A simple biobjective test problem}
We consider the objective-space problem
\[
\max \{(F_1,F_2): (F_1,F_2)\in \Omega\},
\qquad
\Omega:=\{(F_1,F_2)\in [0,1]^2: F_1+F_2\le 1\}.
\]
The Pareto front is the line segment
\[
\mathcal{P}:=\{(F_1,F_2)\in [0,1]^2: F_1+F_2=1\}.
\]
Thus the interior of $\Omega$ consists of dominated objective vectors, while the upper boundary segment is the first nondominated front.

For all experiments we use the same scalar surrogate and the same numerical parameters,
\[
\varepsilon=10^{-3},\qquad \tau=10^{-2},\qquad \sigma=0.06.
\]
The examples differ only in the approximation-set size $\mu$.

\section{Numerical examples}
The experiments start from interior points and apply the chamberwise projected finite-difference ascent scheme to $J_{\varepsilon,\tau}$.  The examples illustrate movement toward the Pareto front, disappearance of deeper layers near the optimizer, and tangential redistribution by the repulsion term.

The first two ten-point runs shown in Figure~\ref{fig:ten-point-paths} use the same objective-space triangle problem but different initializations. In the left panel the approximation set starts from ten mutually nondominated interior points on the line $F_1+F_2=0.7$. In the right panel it starts from a dominated triangular $4+3+2+1$ configuration in the lower-left part of $\Omega$, with the points placed in the subtriangle with vertices $(0,0)$, $(0,0.5)$, and $(0.5,0)$. In both cases the ascent reaches a nearly evenly spread ten-point front approximation; the corresponding first-layer values are approximately $\HV\approx 0.44392$ and $\Mag\approx 2.11098$ for the nondominated line start, and $\HV\approx 0.44437$ and $\Mag\approx 2.11109$ for the dominated triangular start.

\begin{figure}[H]
\centering
\begin{minipage}[t]{0.48\linewidth}
\centering
\vspace{0pt}
\begin{tikzpicture}[scale=7.2, every node/.style={font=\scriptsize}]
  \draw[very thin,gray!30] (0,0) grid[step=0.1] (1,1);
  \fill[blue!4] (0,0) -- (1,0) -- (0,1) -- cycle;
  \draw[thick] (0,0) -- (1,0) -- (0,1) -- cycle;
  \draw[ultra thick,blue!70!black] (1,0) -- (0,1);
  \foreach \x/\lab in {0/0,1/1} {\draw[thin,gray!65] (\x,0) -- +(0,-0.014); \node[below,font=\scriptsize] at (\x,-0.022) {\lab};}
  \foreach \y/\lab in {0/0,1/1} {\draw[thin,gray!65] (0,\y) -- +(-0.014,0); \node[left,font=\scriptsize] at (-0.022,\y) {\lab};}
  \node[above] at (0.5,1.03) {$\mu=10$: line start};
  \draw[gray!70, line width=0.35pt, -{Latex[length=1.05mm,width=0.75mm]}] (0.020,0.680) -- (0.024,0.840);
  \draw[gray!62, line width=0.35pt, -{Latex[length=1.05mm,width=0.75mm]}] (0.024,0.840) -- (0.016,0.984);
  \draw[blue!40, line width=0.35pt, -{Latex[length=1.05mm,width=0.75mm]}] (0.016,0.984) -- (0.000,1.000);
  \draw[cyan!50!black, line width=0.35pt, -{Latex[length=1.05mm,width=0.75mm]}] (0.000,1.000) -- (0.000,1.000);
  \draw[teal!60!black, line width=0.35pt, -{Latex[length=1.05mm,width=0.75mm]}] (0.000,1.000) -- (0.000,1.000);
  \draw[green!50!black, line width=0.35pt, -{Latex[length=1.05mm,width=0.75mm]}] (0.000,1.000) -- (0.000,1.000);
  \draw[orange!70!black, line width=0.35pt, -{Latex[length=1.05mm,width=0.75mm]}] (0.000,1.000) -- (0.000,1.000);
  \draw[red!75!black, line width=0.35pt, -{Latex[length=1.05mm,width=0.75mm]}] (0.000,1.000) -- (0.000,1.000);
  \draw[gray!70, line width=0.35pt, -{Latex[length=1.05mm,width=0.75mm]}] (0.093,0.607) -- (0.158,0.752);
  \draw[gray!62, line width=0.35pt, -{Latex[length=1.05mm,width=0.75mm]}] (0.158,0.752) -- (0.182,0.818);
  \draw[blue!40, line width=0.35pt, -{Latex[length=1.05mm,width=0.75mm]}] (0.182,0.818) -- (0.173,0.827);
  \draw[cyan!50!black, line width=0.35pt, -{Latex[length=1.05mm,width=0.75mm]}] (0.173,0.827) -- (0.172,0.828);
  \draw[teal!60!black, line width=0.35pt, -{Latex[length=1.05mm,width=0.75mm]}] (0.172,0.828) -- (0.171,0.829);
  \draw[green!50!black, line width=0.35pt, -{Latex[length=1.05mm,width=0.75mm]}] (0.171,0.829) -- (0.171,0.829);
  \draw[orange!70!black, line width=0.35pt, -{Latex[length=1.05mm,width=0.75mm]}] (0.171,0.829) -- (0.171,0.829);
  \draw[red!75!black, line width=0.35pt, -{Latex[length=1.05mm,width=0.75mm]}] (0.171,0.829) -- (0.171,0.829);
  \draw[gray!70, line width=0.35pt, -{Latex[length=1.05mm,width=0.75mm]}] (0.167,0.533) -- (0.254,0.667);
  \draw[gray!62, line width=0.35pt, -{Latex[length=1.05mm,width=0.75mm]}] (0.254,0.667) -- (0.283,0.717);
  \draw[blue!40, line width=0.35pt, -{Latex[length=1.05mm,width=0.75mm]}] (0.283,0.717) -- (0.278,0.722);
  \draw[cyan!50!black, line width=0.35pt, -{Latex[length=1.05mm,width=0.75mm]}] (0.278,0.722) -- (0.277,0.723);
  \draw[teal!60!black, line width=0.35pt, -{Latex[length=1.05mm,width=0.75mm]}] (0.277,0.723) -- (0.277,0.723);
  \draw[green!50!black, line width=0.35pt, -{Latex[length=1.05mm,width=0.75mm]}] (0.277,0.723) -- (0.277,0.723);
  \draw[orange!70!black, line width=0.35pt, -{Latex[length=1.05mm,width=0.75mm]}] (0.277,0.723) -- (0.277,0.723);
  \draw[red!75!black, line width=0.35pt, -{Latex[length=1.05mm,width=0.75mm]}] (0.277,0.723) -- (0.277,0.723);
  \draw[gray!70, line width=0.35pt, -{Latex[length=1.05mm,width=0.75mm]}] (0.240,0.460) -- (0.340,0.584);
  \draw[gray!62, line width=0.35pt, -{Latex[length=1.05mm,width=0.75mm]}] (0.340,0.584) -- (0.372,0.628);
  \draw[blue!40, line width=0.35pt, -{Latex[length=1.05mm,width=0.75mm]}] (0.372,0.628) -- (0.370,0.630);
  \draw[cyan!50!black, line width=0.35pt, -{Latex[length=1.05mm,width=0.75mm]}] (0.370,0.630) -- (0.369,0.631);
  \draw[teal!60!black, line width=0.35pt, -{Latex[length=1.05mm,width=0.75mm]}] (0.369,0.631) -- (0.369,0.631);
  \draw[green!50!black, line width=0.35pt, -{Latex[length=1.05mm,width=0.75mm]}] (0.369,0.631) -- (0.369,0.631);
  \draw[orange!70!black, line width=0.35pt, -{Latex[length=1.05mm,width=0.75mm]}] (0.369,0.631) -- (0.369,0.631);
  \draw[red!75!black, line width=0.35pt, -{Latex[length=1.05mm,width=0.75mm]}] (0.369,0.631) -- (0.369,0.631);
  \draw[gray!70, line width=0.35pt, -{Latex[length=1.05mm,width=0.75mm]}] (0.313,0.387) -- (0.423,0.503);
  \draw[gray!62, line width=0.35pt, -{Latex[length=1.05mm,width=0.75mm]}] (0.423,0.503) -- (0.458,0.542);
  \draw[blue!40, line width=0.35pt, -{Latex[length=1.05mm,width=0.75mm]}] (0.458,0.542) -- (0.457,0.543);
  \draw[cyan!50!black, line width=0.35pt, -{Latex[length=1.05mm,width=0.75mm]}] (0.457,0.543) -- (0.457,0.543);
  \draw[teal!60!black, line width=0.35pt, -{Latex[length=1.05mm,width=0.75mm]}] (0.457,0.543) -- (0.457,0.543);
  \draw[green!50!black, line width=0.35pt, -{Latex[length=1.05mm,width=0.75mm]}] (0.457,0.543) -- (0.457,0.543);
  \draw[orange!70!black, line width=0.35pt, -{Latex[length=1.05mm,width=0.75mm]}] (0.457,0.543) -- (0.457,0.543);
  \draw[red!75!black, line width=0.35pt, -{Latex[length=1.05mm,width=0.75mm]}] (0.457,0.543) -- (0.457,0.543);
  \draw[gray!70, line width=0.35pt, -{Latex[length=1.05mm,width=0.75mm]}] (0.387,0.313) -- (0.503,0.423);
  \draw[gray!62, line width=0.35pt, -{Latex[length=1.05mm,width=0.75mm]}] (0.503,0.423) -- (0.542,0.458);
  \draw[blue!40, line width=0.35pt, -{Latex[length=1.05mm,width=0.75mm]}] (0.542,0.458) -- (0.543,0.457);
  \draw[cyan!50!black, line width=0.35pt, -{Latex[length=1.05mm,width=0.75mm]}] (0.543,0.457) -- (0.543,0.457);
  \draw[teal!60!black, line width=0.35pt, -{Latex[length=1.05mm,width=0.75mm]}] (0.543,0.457) -- (0.543,0.457);
  \draw[green!50!black, line width=0.35pt, -{Latex[length=1.05mm,width=0.75mm]}] (0.543,0.457) -- (0.543,0.457);
  \draw[orange!70!black, line width=0.35pt, -{Latex[length=1.05mm,width=0.75mm]}] (0.543,0.457) -- (0.543,0.457);
  \draw[red!75!black, line width=0.35pt, -{Latex[length=1.05mm,width=0.75mm]}] (0.543,0.457) -- (0.543,0.457);
  \draw[gray!70, line width=0.35pt, -{Latex[length=1.05mm,width=0.75mm]}] (0.460,0.240) -- (0.584,0.340);
  \draw[gray!62, line width=0.35pt, -{Latex[length=1.05mm,width=0.75mm]}] (0.584,0.340) -- (0.628,0.372);
  \draw[blue!40, line width=0.35pt, -{Latex[length=1.05mm,width=0.75mm]}] (0.628,0.372) -- (0.630,0.370);
  \draw[cyan!50!black, line width=0.35pt, -{Latex[length=1.05mm,width=0.75mm]}] (0.630,0.370) -- (0.631,0.369);
  \draw[teal!60!black, line width=0.35pt, -{Latex[length=1.05mm,width=0.75mm]}] (0.631,0.369) -- (0.631,0.369);
  \draw[green!50!black, line width=0.35pt, -{Latex[length=1.05mm,width=0.75mm]}] (0.631,0.369) -- (0.631,0.369);
  \draw[orange!70!black, line width=0.35pt, -{Latex[length=1.05mm,width=0.75mm]}] (0.631,0.369) -- (0.631,0.369);
  \draw[red!75!black, line width=0.35pt, -{Latex[length=1.05mm,width=0.75mm]}] (0.631,0.369) -- (0.631,0.369);
  \draw[gray!70, line width=0.35pt, -{Latex[length=1.05mm,width=0.75mm]}] (0.533,0.167) -- (0.667,0.254);
  \draw[gray!62, line width=0.35pt, -{Latex[length=1.05mm,width=0.75mm]}] (0.667,0.254) -- (0.717,0.283);
  \draw[blue!40, line width=0.35pt, -{Latex[length=1.05mm,width=0.75mm]}] (0.717,0.283) -- (0.722,0.278);
  \draw[cyan!50!black, line width=0.35pt, -{Latex[length=1.05mm,width=0.75mm]}] (0.722,0.278) -- (0.723,0.277);
  \draw[teal!60!black, line width=0.35pt, -{Latex[length=1.05mm,width=0.75mm]}] (0.723,0.277) -- (0.723,0.277);
  \draw[green!50!black, line width=0.35pt, -{Latex[length=1.05mm,width=0.75mm]}] (0.723,0.277) -- (0.723,0.277);
  \draw[orange!70!black, line width=0.35pt, -{Latex[length=1.05mm,width=0.75mm]}] (0.723,0.277) -- (0.723,0.277);
  \draw[red!75!black, line width=0.35pt, -{Latex[length=1.05mm,width=0.75mm]}] (0.723,0.277) -- (0.723,0.277);
  \draw[gray!70, line width=0.35pt, -{Latex[length=1.05mm,width=0.75mm]}] (0.607,0.093) -- (0.752,0.158);
  \draw[gray!62, line width=0.35pt, -{Latex[length=1.05mm,width=0.75mm]}] (0.752,0.158) -- (0.818,0.182);
  \draw[blue!40, line width=0.35pt, -{Latex[length=1.05mm,width=0.75mm]}] (0.818,0.182) -- (0.827,0.173);
  \draw[cyan!50!black, line width=0.35pt, -{Latex[length=1.05mm,width=0.75mm]}] (0.827,0.173) -- (0.828,0.172);
  \draw[teal!60!black, line width=0.35pt, -{Latex[length=1.05mm,width=0.75mm]}] (0.828,0.172) -- (0.829,0.171);
  \draw[green!50!black, line width=0.35pt, -{Latex[length=1.05mm,width=0.75mm]}] (0.829,0.171) -- (0.829,0.171);
  \draw[orange!70!black, line width=0.35pt, -{Latex[length=1.05mm,width=0.75mm]}] (0.829,0.171) -- (0.829,0.171);
  \draw[red!75!black, line width=0.35pt, -{Latex[length=1.05mm,width=0.75mm]}] (0.829,0.171) -- (0.829,0.171);
  \draw[gray!70, line width=0.35pt, -{Latex[length=1.05mm,width=0.75mm]}] (0.680,0.020) -- (0.840,0.024);
  \draw[gray!62, line width=0.35pt, -{Latex[length=1.05mm,width=0.75mm]}] (0.840,0.024) -- (0.984,0.016);
  \draw[blue!40, line width=0.35pt, -{Latex[length=1.05mm,width=0.75mm]}] (0.984,0.016) -- (1.000,0.000);
  \draw[cyan!50!black, line width=0.35pt, -{Latex[length=1.05mm,width=0.75mm]}] (1.000,0.000) -- (1.000,0.000);
  \draw[teal!60!black, line width=0.35pt, -{Latex[length=1.05mm,width=0.75mm]}] (1.000,0.000) -- (1.000,0.000);
  \draw[green!50!black, line width=0.35pt, -{Latex[length=1.05mm,width=0.75mm]}] (1.000,0.000) -- (1.000,0.000);
  \draw[orange!70!black, line width=0.35pt, -{Latex[length=1.05mm,width=0.75mm]}] (1.000,0.000) -- (1.000,0.000);
  \draw[red!75!black, line width=0.35pt, -{Latex[length=1.05mm,width=0.75mm]}] (1.000,0.000) -- (1.000,0.000);
  \filldraw[fill=darkblue, draw=white, line width=0.12pt] (0.02446,0.83993) circle (0.0068);
  \filldraw[fill=darkblue, draw=white, line width=0.12pt] (0.15846,0.75204) circle (0.0068);
  \filldraw[fill=darkblue, draw=white, line width=0.12pt] (0.25420,0.66666) circle (0.0068);
  \filldraw[fill=darkblue, draw=white, line width=0.12pt] (0.34043,0.58430) circle (0.0068);
  \filldraw[fill=darkblue, draw=white, line width=0.12pt] (0.42266,0.50341) circle (0.0068);
  \filldraw[fill=darkblue, draw=white, line width=0.12pt] (0.50341,0.42266) circle (0.0068);
  \filldraw[fill=darkblue, draw=white, line width=0.12pt] (0.58430,0.34043) circle (0.0068);
  \filldraw[fill=darkblue, draw=white, line width=0.12pt] (0.66666,0.25420) circle (0.0068);
  \filldraw[fill=darkblue, draw=white, line width=0.12pt] (0.75204,0.15846) circle (0.0068);
  \filldraw[fill=darkblue, draw=white, line width=0.12pt] (0.83993,0.02446) circle (0.0068);
  \filldraw[fill=darkblue, draw=white, line width=0.12pt] (0.01625,0.98375) circle (0.0068);
  \filldraw[fill=darkblue, draw=white, line width=0.12pt] (0.18239,0.81761) circle (0.0068);
  \filldraw[fill=darkblue, draw=white, line width=0.12pt] (0.28273,0.71727) circle (0.0068);
  \filldraw[fill=darkblue, draw=white, line width=0.12pt] (0.37225,0.62775) circle (0.0068);
  \filldraw[fill=darkblue, draw=white, line width=0.12pt] (0.45780,0.54221) circle (0.0068);
  \filldraw[fill=darkblue, draw=white, line width=0.12pt] (0.54221,0.45780) circle (0.0068);
  \filldraw[fill=darkblue, draw=white, line width=0.12pt] (0.62775,0.37225) circle (0.0068);
  \filldraw[fill=darkblue, draw=white, line width=0.12pt] (0.71727,0.28273) circle (0.0068);
  \filldraw[fill=darkblue, draw=white, line width=0.12pt] (0.81761,0.18239) circle (0.0068);
  \filldraw[fill=darkblue, draw=white, line width=0.12pt] (0.98375,0.01625) circle (0.0068);
  \filldraw[fill=darkblue, draw=white, line width=0.12pt] (0.00000,1.00000) circle (0.0068);
  \filldraw[fill=darkblue, draw=white, line width=0.12pt] (0.17329,0.82671) circle (0.0068);
  \filldraw[fill=darkblue, draw=white, line width=0.12pt] (0.27834,0.72166) circle (0.0068);
  \filldraw[fill=darkblue, draw=white, line width=0.12pt] (0.37000,0.63000) circle (0.0068);
  \filldraw[fill=darkblue, draw=white, line width=0.12pt] (0.45709,0.54291) circle (0.0068);
  \filldraw[fill=darkblue, draw=white, line width=0.12pt] (0.54291,0.45709) circle (0.0068);
  \filldraw[fill=darkblue, draw=white, line width=0.12pt] (0.63000,0.37000) circle (0.0068);
  \filldraw[fill=darkblue, draw=white, line width=0.12pt] (0.72166,0.27834) circle (0.0068);
  \filldraw[fill=darkblue, draw=white, line width=0.12pt] (0.82671,0.17329) circle (0.0068);
  \filldraw[fill=darkblue, draw=white, line width=0.12pt] (1.00000,0.00000) circle (0.0068);
  \filldraw[fill=darkblue, draw=white, line width=0.12pt] (0.00000,1.00000) circle (0.0068);
  \filldraw[fill=darkblue, draw=white, line width=0.12pt] (0.17154,0.82846) circle (0.0068);
  \filldraw[fill=darkblue, draw=white, line width=0.12pt] (0.27713,0.72287) circle (0.0068);
  \filldraw[fill=darkblue, draw=white, line width=0.12pt] (0.36928,0.63072) circle (0.0068);
  \filldraw[fill=darkblue, draw=white, line width=0.12pt] (0.45685,0.54315) circle (0.0068);
  \filldraw[fill=darkblue, draw=white, line width=0.12pt] (0.54315,0.45685) circle (0.0068);
  \filldraw[fill=darkblue, draw=white, line width=0.12pt] (0.63072,0.36928) circle (0.0068);
  \filldraw[fill=darkblue, draw=white, line width=0.12pt] (0.72287,0.27713) circle (0.0068);
  \filldraw[fill=darkblue, draw=white, line width=0.12pt] (0.82846,0.17154) circle (0.0068);
  \filldraw[fill=darkblue, draw=white, line width=0.12pt] (1.00000,0.00000) circle (0.0068);
  \filldraw[fill=darkblue, draw=white, line width=0.12pt] (0.00000,1.00000) circle (0.0068);
  \filldraw[fill=darkblue, draw=white, line width=0.12pt] (0.17120,0.82880) circle (0.0068);
  \filldraw[fill=darkblue, draw=white, line width=0.12pt] (0.27688,0.72311) circle (0.0068);
  \filldraw[fill=darkblue, draw=white, line width=0.12pt] (0.36913,0.63087) circle (0.0068);
  \filldraw[fill=darkblue, draw=white, line width=0.12pt] (0.45680,0.54320) circle (0.0068);
  \filldraw[fill=darkblue, draw=white, line width=0.12pt] (0.54320,0.45680) circle (0.0068);
  \filldraw[fill=darkblue, draw=white, line width=0.12pt] (0.63087,0.36913) circle (0.0068);
  \filldraw[fill=darkblue, draw=white, line width=0.12pt] (0.72311,0.27688) circle (0.0068);
  \filldraw[fill=darkblue, draw=white, line width=0.12pt] (0.82880,0.17120) circle (0.0068);
  \filldraw[fill=darkblue, draw=white, line width=0.12pt] (1.00000,0.00000) circle (0.0068);
  \filldraw[fill=darkblue, draw=white, line width=0.12pt] (0.00000,1.00000) circle (0.0068);
  \filldraw[fill=darkblue, draw=white, line width=0.12pt] (0.17113,0.82887) circle (0.0068);
  \filldraw[fill=darkblue, draw=white, line width=0.12pt] (0.27684,0.72316) circle (0.0068);
  \filldraw[fill=darkblue, draw=white, line width=0.12pt] (0.36911,0.63089) circle (0.0068);
  \filldraw[fill=darkblue, draw=white, line width=0.12pt] (0.45679,0.54321) circle (0.0068);
  \filldraw[fill=darkblue, draw=white, line width=0.12pt] (0.54321,0.45679) circle (0.0068);
  \filldraw[fill=darkblue, draw=white, line width=0.12pt] (0.63089,0.36911) circle (0.0068);
  \filldraw[fill=darkblue, draw=white, line width=0.12pt] (0.72316,0.27684) circle (0.0068);
  \filldraw[fill=darkblue, draw=white, line width=0.12pt] (0.82887,0.17113) circle (0.0068);
  \filldraw[fill=darkblue, draw=white, line width=0.12pt] (1.00000,0.00000) circle (0.0068);
  \filldraw[fill=darkblue, draw=white, line width=0.12pt] (0.00000,1.00000) circle (0.0068);
  \filldraw[fill=darkblue, draw=white, line width=0.12pt] (0.17111,0.82889) circle (0.0068);
  \filldraw[fill=darkblue, draw=white, line width=0.12pt] (0.27683,0.72317) circle (0.0068);
  \filldraw[fill=darkblue, draw=white, line width=0.12pt] (0.36910,0.63090) circle (0.0068);
  \filldraw[fill=darkblue, draw=white, line width=0.12pt] (0.45679,0.54321) circle (0.0068);
  \filldraw[fill=darkblue, draw=white, line width=0.12pt] (0.54321,0.45679) circle (0.0068);
  \filldraw[fill=darkblue, draw=white, line width=0.12pt] (0.63090,0.36910) circle (0.0068);
  \filldraw[fill=darkblue, draw=white, line width=0.12pt] (0.72317,0.27683) circle (0.0068);
  \filldraw[fill=darkblue, draw=white, line width=0.12pt] (0.82889,0.17111) circle (0.0068);
  \filldraw[fill=darkblue, draw=white, line width=0.12pt] (1.00000,0.00000) circle (0.0068);
  \filldraw[fill=darkblue, draw=white, line width=0.12pt] (0.020,0.680) circle (0.0075);
  \filldraw[fill=darkblue, draw=white, line width=0.12pt] (0.093,0.607) circle (0.0075);
  \filldraw[fill=darkblue, draw=white, line width=0.12pt] (0.167,0.533) circle (0.0075);
  \filldraw[fill=darkblue, draw=white, line width=0.12pt] (0.240,0.460) circle (0.0075);
  \filldraw[fill=darkblue, draw=white, line width=0.12pt] (0.313,0.387) circle (0.0075);
  \filldraw[fill=darkblue, draw=white, line width=0.12pt] (0.387,0.313) circle (0.0075);
  \filldraw[fill=darkblue, draw=white, line width=0.12pt] (0.460,0.240) circle (0.0075);
  \filldraw[fill=darkblue, draw=white, line width=0.12pt] (0.533,0.167) circle (0.0075);
  \filldraw[fill=darkblue, draw=white, line width=0.12pt] (0.607,0.093) circle (0.0075);
  \filldraw[fill=darkblue, draw=white, line width=0.12pt] (0.680,0.020) circle (0.0075);
  \fill[red!80!black] (0.000,1.000) circle (0.0095);
  \fill[red!80!black] (0.171,0.829) circle (0.0095);
  \fill[red!80!black] (0.277,0.723) circle (0.0095);
  \fill[red!80!black] (0.369,0.631) circle (0.0095);
  \fill[red!80!black] (0.457,0.543) circle (0.0095);
  \fill[red!80!black] (0.543,0.457) circle (0.0095);
  \fill[red!80!black] (0.631,0.369) circle (0.0095);
  \fill[red!80!black] (0.723,0.277) circle (0.0095);
  \fill[red!80!black] (0.829,0.171) circle (0.0095);
  \fill[red!80!black] (1.000,0.000) circle (0.0095);
  \node[below] at (0.5,-0.07) {$F_1$};
  \node[rotate=90] at (-0.09,0.5) {$F_2$};
\end{tikzpicture}
\par\smallskip{\scriptsize (a) Nondominated line start: ten interior points on $F_1+F_2=0.7$.}
\end{minipage}\hfill
\begin{minipage}[t]{0.48\linewidth}
\centering
\vspace{0pt}
\begin{tikzpicture}[scale=7.2, every node/.style={font=\scriptsize}]
  \draw[very thin,gray!30] (0,0) grid[step=0.1] (1,1);
  \fill[blue!4] (0,0) -- (1,0) -- (0,1) -- cycle;
  \draw[thick] (0,0) -- (1,0) -- (0,1) -- cycle;
  \draw[ultra thick,blue!70!black] (1,0) -- (0,1);
  \foreach \x/\lab in {0/0,1/1} {\draw[thin,gray!65] (\x,0) -- +(0,-0.014); \node[below,font=\scriptsize] at (\x,-0.022) {\lab};}
  \foreach \y/\lab in {0/0,1/1} {\draw[thin,gray!65] (0,\y) -- +(-0.014,0); \node[left,font=\scriptsize] at (-0.022,\y) {\lab};}
  \node[above] at (0.5,1.03) {$\mu=10$: triangular start};
  \draw[gray!70, line width=0.35pt, -{Latex[length=1.05mm,width=0.75mm]}] (0.03000,0.27000) -- (0.03846,0.50985);
  \draw[gray!62, line width=0.35pt, -{Latex[length=1.05mm,width=0.75mm]}] (0.03846,0.50985) -- (0.05065,0.74954);
  \draw[blue!40, line width=0.35pt, -{Latex[length=1.05mm,width=0.75mm]}] (0.05065,0.74954) -- (0.03118,0.96882);
  \draw[cyan!50!black, line width=0.35pt, -{Latex[length=1.05mm,width=0.75mm]}] (0.03118,0.96882) -- (0.00000,1.00000);
  \draw[teal!60!black, line width=0.35pt, -{Latex[length=1.05mm,width=0.75mm]}] (0.00000,1.00000) -- (0.00000,1.00000);
  \draw[green!50!black, line width=0.35pt, -{Latex[length=1.05mm,width=0.75mm]}] (0.00000,1.00000) -- (0.00000,1.00000);
  \draw[orange!70!black, line width=0.35pt, -{Latex[length=1.05mm,width=0.75mm]}] (0.00000,1.00000) -- (0.00000,1.00000);
  \draw[red!75!black, line width=0.35pt, -{Latex[length=1.05mm,width=0.75mm]}] (0.00000,1.00000) -- (0.00000,1.00000);
  \draw[gray!70, line width=0.35pt, -{Latex[length=1.05mm,width=0.75mm]}] (0.08000,0.22000) -- (0.21545,0.41768);
  \draw[gray!62, line width=0.35pt, -{Latex[length=1.05mm,width=0.75mm]}] (0.21545,0.41768) -- (0.34953,0.61666);
  \draw[blue!40, line width=0.35pt, -{Latex[length=1.05mm,width=0.75mm]}] (0.34953,0.61666) -- (0.33292,0.66708);
  \draw[cyan!50!black, line width=0.35pt, -{Latex[length=1.05mm,width=0.75mm]}] (0.33292,0.66708) -- (0.31528,0.68472);
  \draw[teal!60!black, line width=0.35pt, -{Latex[length=1.05mm,width=0.75mm]}] (0.31528,0.68472) -- (0.31289,0.68711);
  \draw[green!50!black, line width=0.35pt, -{Latex[length=1.05mm,width=0.75mm]}] (0.31289,0.68711) -- (0.31488,0.68512);
  \draw[orange!70!black, line width=0.35pt, -{Latex[length=1.05mm,width=0.75mm]}] (0.31488,0.68512) -- (0.31741,0.68259);
  \draw[red!75!black, line width=0.35pt, -{Latex[length=1.05mm,width=0.75mm]}] (0.31741,0.68259) -- (0.31981,0.68019);
  \draw[gray!70, line width=0.35pt, -{Latex[length=1.05mm,width=0.75mm]}] (0.13000,0.17000) -- (0.32775,0.30536);
  \draw[gray!62, line width=0.35pt, -{Latex[length=1.05mm,width=0.75mm]}] (0.32775,0.30536) -- (0.53274,0.42978);
  \draw[blue!40, line width=0.35pt, -{Latex[length=1.05mm,width=0.75mm]}] (0.53274,0.42978) -- (0.62932,0.37068);
  \draw[cyan!50!black, line width=0.35pt, -{Latex[length=1.05mm,width=0.75mm]}] (0.62932,0.37068) -- (0.62176,0.37824);
  \draw[teal!60!black, line width=0.35pt, -{Latex[length=1.05mm,width=0.75mm]}] (0.62176,0.37824) -- (0.63799,0.36201);
  \draw[green!50!black, line width=0.35pt, -{Latex[length=1.05mm,width=0.75mm]}] (0.63799,0.36201) -- (0.64450,0.35550);
  \draw[orange!70!black, line width=0.35pt, -{Latex[length=1.05mm,width=0.75mm]}] (0.64450,0.35550) -- (0.64883,0.35117);
  \draw[red!75!black, line width=0.35pt, -{Latex[length=1.05mm,width=0.75mm]}] (0.64883,0.35117) -- (0.65212,0.34788);
  \draw[gray!70, line width=0.35pt, -{Latex[length=1.05mm,width=0.75mm]}] (0.18000,0.12000) -- (0.41986,0.12821);
  \draw[gray!62, line width=0.35pt, -{Latex[length=1.05mm,width=0.75mm]}] (0.41986,0.12821) -- (0.65954,0.14052);
  \draw[blue!40, line width=0.35pt, -{Latex[length=1.05mm,width=0.75mm]}] (0.65954,0.14052) -- (0.87635,0.12365);
  \draw[cyan!50!black, line width=0.35pt, -{Latex[length=1.05mm,width=0.75mm]}] (0.87635,0.12365) -- (1.00000,0.00000);
  \draw[teal!60!black, line width=0.35pt, -{Latex[length=1.05mm,width=0.75mm]}] (1.00000,0.00000) -- (1.00000,0.00000);
  \draw[green!50!black, line width=0.35pt, -{Latex[length=1.05mm,width=0.75mm]}] (1.00000,0.00000) -- (1.00000,0.00000);
  \draw[orange!70!black, line width=0.35pt, -{Latex[length=1.05mm,width=0.75mm]}] (1.00000,0.00000) -- (1.00000,0.00000);
  \draw[red!75!black, line width=0.35pt, -{Latex[length=1.05mm,width=0.75mm]}] (1.00000,0.00000) -- (1.00000,0.00000);
  \draw[gray!70, line width=0.35pt, -{Latex[length=1.05mm,width=0.75mm]}] (0.05000,0.19000) -- (0.03981,0.40861);
  \draw[gray!62, line width=0.35pt, -{Latex[length=1.05mm,width=0.75mm]}] (0.03981,0.40861) -- (0.10350,0.63694);
  \draw[blue!40, line width=0.35pt, -{Latex[length=1.05mm,width=0.75mm]}] (0.10350,0.63694) -- (0.19936,0.80064);
  \draw[cyan!50!black, line width=0.35pt, -{Latex[length=1.05mm,width=0.75mm]}] (0.19936,0.80064) -- (0.20914,0.79086);
  \draw[teal!60!black, line width=0.35pt, -{Latex[length=1.05mm,width=0.75mm]}] (0.20914,0.79086) -- (0.20883,0.79117);
  \draw[green!50!black, line width=0.35pt, -{Latex[length=1.05mm,width=0.75mm]}] (0.20883,0.79117) -- (0.20935,0.79066);
  \draw[orange!70!black, line width=0.35pt, -{Latex[length=1.05mm,width=0.75mm]}] (0.20935,0.79066) -- (0.21078,0.78922);
  \draw[red!75!black, line width=0.35pt, -{Latex[length=1.05mm,width=0.75mm]}] (0.21078,0.78922) -- (0.21238,0.78762);
  \draw[gray!70, line width=0.35pt, -{Latex[length=1.05mm,width=0.75mm]}] (0.10000,0.14000) -- (0.22905,0.27261);
  \draw[gray!62, line width=0.35pt, -{Latex[length=1.05mm,width=0.75mm]}] (0.22905,0.27261) -- (0.38229,0.44944);
  \draw[blue!40, line width=0.35pt, -{Latex[length=1.05mm,width=0.75mm]}] (0.38229,0.44944) -- (0.47008,0.52992);
  \draw[cyan!50!black, line width=0.35pt, -{Latex[length=1.05mm,width=0.75mm]}] (0.47008,0.52992) -- (0.51373,0.48627);
  \draw[teal!60!black, line width=0.35pt, -{Latex[length=1.05mm,width=0.75mm]}] (0.51373,0.48627) -- (0.52521,0.47479);
  \draw[green!50!black, line width=0.35pt, -{Latex[length=1.05mm,width=0.75mm]}] (0.52521,0.47479) -- (0.53158,0.46842);
  \draw[orange!70!black, line width=0.35pt, -{Latex[length=1.05mm,width=0.75mm]}] (0.53158,0.46842) -- (0.53597,0.46403);
  \draw[red!75!black, line width=0.35pt, -{Latex[length=1.05mm,width=0.75mm]}] (0.53597,0.46403) -- (0.53940,0.46060);
  \draw[gray!70, line width=0.35pt, -{Latex[length=1.05mm,width=0.75mm]}] (0.15000,0.09000) -- (0.36773,0.06907);
  \draw[gray!62, line width=0.35pt, -{Latex[length=1.05mm,width=0.75mm]}] (0.36773,0.06907) -- (0.60714,0.08244);
  \draw[blue!40, line width=0.35pt, -{Latex[length=1.05mm,width=0.75mm]}] (0.60714,0.08244) -- (0.83857,0.07541);
  \draw[cyan!50!black, line width=0.35pt, -{Latex[length=1.05mm,width=0.75mm]}] (0.83857,0.07541) -- (0.89555,0.10445);
  \draw[teal!60!black, line width=0.35pt, -{Latex[length=1.05mm,width=0.75mm]}] (0.89555,0.10445) -- (0.87849,0.12151);
  \draw[green!50!black, line width=0.35pt, -{Latex[length=1.05mm,width=0.75mm]}] (0.87849,0.12151) -- (0.87974,0.12026);
  \draw[orange!70!black, line width=0.35pt, -{Latex[length=1.05mm,width=0.75mm]}] (0.87974,0.12026) -- (0.88153,0.11847);
  \draw[red!75!black, line width=0.35pt, -{Latex[length=1.05mm,width=0.75mm]}] (0.88153,0.11847) -- (0.88296,0.11704);
  \draw[gray!70, line width=0.35pt, -{Latex[length=1.05mm,width=0.75mm]}] (0.07000,0.11000) -- (0.03270,0.26254);
  \draw[gray!62, line width=0.35pt, -{Latex[length=1.05mm,width=0.75mm]}] (0.03270,0.26254) -- (0.04360,0.50211);
  \draw[blue!40, line width=0.35pt, -{Latex[length=1.05mm,width=0.75mm]}] (0.04360,0.50211) -- (0.07233,0.74037);
  \draw[cyan!50!black, line width=0.35pt, -{Latex[length=1.05mm,width=0.75mm]}] (0.07233,0.74037) -- (0.10575,0.89425);
  \draw[teal!60!black, line width=0.35pt, -{Latex[length=1.05mm,width=0.75mm]}] (0.10575,0.89425) -- (0.10456,0.89544);
  \draw[green!50!black, line width=0.35pt, -{Latex[length=1.05mm,width=0.75mm]}] (0.10456,0.89544) -- (0.10455,0.89546);
  \draw[orange!70!black, line width=0.35pt, -{Latex[length=1.05mm,width=0.75mm]}] (0.10455,0.89546) -- (0.10516,0.89484);
  \draw[red!75!black, line width=0.35pt, -{Latex[length=1.05mm,width=0.75mm]}] (0.10516,0.89484) -- (0.10594,0.89406);
  \draw[gray!70, line width=0.35pt, -{Latex[length=1.05mm,width=0.75mm]}] (0.12000,0.06000) -- (0.28843,0.00000);
  \draw[gray!62, line width=0.35pt, -{Latex[length=1.05mm,width=0.75mm]}] (0.28843,0.00000) -- (0.51338,0.03064);
  \draw[blue!40, line width=0.35pt, -{Latex[length=1.05mm,width=0.75mm]}] (0.51338,0.03064) -- (0.60697,0.24533);
  \draw[cyan!50!black, line width=0.35pt, -{Latex[length=1.05mm,width=0.75mm]}] (0.60697,0.24533) -- (0.74708,0.25292);
  \draw[teal!60!black, line width=0.35pt, -{Latex[length=1.05mm,width=0.75mm]}] (0.74708,0.25292) -- (0.75669,0.24331);
  \draw[green!50!black, line width=0.35pt, -{Latex[length=1.05mm,width=0.75mm]}] (0.75669,0.24331) -- (0.76082,0.23918);
  \draw[orange!70!black, line width=0.35pt, -{Latex[length=1.05mm,width=0.75mm]}] (0.76082,0.23918) -- (0.76418,0.23582);
  \draw[red!75!black, line width=0.35pt, -{Latex[length=1.05mm,width=0.75mm]}] (0.76418,0.23582) -- (0.76677,0.23323);
  \draw[gray!70, line width=0.35pt, -{Latex[length=1.05mm,width=0.75mm]}] (0.09000,0.03000) -- (0.11467,0.08694);
  \draw[gray!62, line width=0.35pt, -{Latex[length=1.05mm,width=0.75mm]}] (0.11467,0.08694) -- (0.20067,0.29212);
  \draw[blue!40, line width=0.35pt, -{Latex[length=1.05mm,width=0.75mm]}] (0.20067,0.29212) -- (0.37025,0.46034);
  \draw[cyan!50!black, line width=0.35pt, -{Latex[length=1.05mm,width=0.75mm]}] (0.37025,0.46034) -- (0.41566,0.58434);
  \draw[teal!60!black, line width=0.35pt, -{Latex[length=1.05mm,width=0.75mm]}] (0.41566,0.58434) -- (0.41767,0.58233);
  \draw[green!50!black, line width=0.35pt, -{Latex[length=1.05mm,width=0.75mm]}] (0.41767,0.58233) -- (0.42196,0.57804);
  \draw[orange!70!black, line width=0.35pt, -{Latex[length=1.05mm,width=0.75mm]}] (0.42196,0.57804) -- (0.42561,0.57439);
  \draw[red!75!black, line width=0.35pt, -{Latex[length=1.05mm,width=0.75mm]}] (0.42561,0.57439) -- (0.42869,0.57131);
  \filldraw[fill=darkblue, draw=white, line width=0.12pt] (0.03846,0.50985) circle (0.0068);
  \filldraw[fill=darkblue, draw=white, line width=0.12pt] (0.21545,0.41768) circle (0.0068);
  \filldraw[fill=darkblue, draw=white, line width=0.12pt] (0.32775,0.30536) circle (0.0068);
  \filldraw[fill=darkblue, draw=white, line width=0.12pt] (0.41986,0.12821) circle (0.0068);
  \filldraw[fill=darkgreen, draw=white, line width=0.12pt] (0.03981,0.40861) circle (0.0068);
  \filldraw[fill=darkgreen, draw=white, line width=0.12pt] (0.22905,0.27261) circle (0.0068);
  \filldraw[fill=darkgreen, draw=white, line width=0.12pt] (0.36773,0.06907) circle (0.0068);
  \filldraw[fill=darkorange, draw=white, line width=0.12pt] (0.03270,0.26254) circle (0.0068);
  \filldraw[fill=darkorange, draw=white, line width=0.12pt] (0.28843,0.00000) circle (0.0068);
  \filldraw[fill=darkorange, draw=white, line width=0.12pt] (0.11467,0.08694) circle (0.0068);
  \filldraw[fill=darkblue, draw=white, line width=0.12pt] (0.05065,0.74954) circle (0.0068);
  \filldraw[fill=darkblue, draw=white, line width=0.12pt] (0.34953,0.61666) circle (0.0068);
  \filldraw[fill=darkblue, draw=white, line width=0.12pt] (0.53274,0.42978) circle (0.0068);
  \filldraw[fill=darkblue, draw=white, line width=0.12pt] (0.65954,0.14052) circle (0.0068);
  \filldraw[fill=darkblue, draw=white, line width=0.12pt] (0.10350,0.63694) circle (0.0068);
  \filldraw[fill=darkblue, draw=white, line width=0.12pt] (0.38229,0.44944) circle (0.0068);
  \filldraw[fill=darkgreen, draw=white, line width=0.12pt] (0.60714,0.08244) circle (0.0068);
  \filldraw[fill=darkgreen, draw=white, line width=0.12pt] (0.04360,0.50211) circle (0.0068);
  \filldraw[fill=darkorange, draw=white, line width=0.12pt] (0.51338,0.03064) circle (0.0068);
  \filldraw[fill=darkgreen, draw=white, line width=0.12pt] (0.20067,0.29213) circle (0.0068);
  \filldraw[fill=darkblue, draw=white, line width=0.12pt] (0.03118,0.96882) circle (0.0068);
  \filldraw[fill=darkblue, draw=white, line width=0.12pt] (0.33292,0.66708) circle (0.0068);
  \filldraw[fill=darkblue, draw=white, line width=0.12pt] (0.62932,0.37068) circle (0.0068);
  \filldraw[fill=darkblue, draw=white, line width=0.12pt] (0.87635,0.12365) circle (0.0068);
  \filldraw[fill=darkblue, draw=white, line width=0.12pt] (0.19936,0.80064) circle (0.0068);
  \filldraw[fill=darkblue, draw=white, line width=0.12pt] (0.47008,0.52992) circle (0.0068);
  \filldraw[fill=darkgreen, draw=white, line width=0.12pt] (0.83857,0.07541) circle (0.0068);
  \filldraw[fill=darkgreen, draw=white, line width=0.12pt] (0.07233,0.74037) circle (0.0068);
  \filldraw[fill=darkgreen, draw=white, line width=0.12pt] (0.60698,0.24533) circle (0.0068);
  \filldraw[fill=darkgreen, draw=white, line width=0.12pt] (0.37025,0.46034) circle (0.0068);
  \filldraw[fill=darkblue, draw=white, line width=0.12pt] (0.00000,1.00000) circle (0.0068);
  \filldraw[fill=darkblue, draw=white, line width=0.12pt] (0.31528,0.68472) circle (0.0068);
  \filldraw[fill=darkblue, draw=white, line width=0.12pt] (0.62176,0.37824) circle (0.0068);
  \filldraw[fill=darkblue, draw=white, line width=0.12pt] (1.00000,0.00000) circle (0.0068);
  \filldraw[fill=darkblue, draw=white, line width=0.12pt] (0.20914,0.79086) circle (0.0068);
  \filldraw[fill=darkblue, draw=white, line width=0.12pt] (0.51374,0.48627) circle (0.0068);
  \filldraw[fill=darkblue, draw=white, line width=0.12pt] (0.89555,0.10445) circle (0.0068);
  \filldraw[fill=darkblue, draw=white, line width=0.12pt] (0.10575,0.89425) circle (0.0068);
  \filldraw[fill=darkblue, draw=white, line width=0.12pt] (0.74708,0.25292) circle (0.0068);
  \filldraw[fill=darkblue, draw=white, line width=0.12pt] (0.41566,0.58434) circle (0.0068);
  \filldraw[fill=darkblue, draw=white, line width=0.12pt] (0.00000,1.00000) circle (0.0068);
  \filldraw[fill=darkblue, draw=white, line width=0.12pt] (0.31289,0.68711) circle (0.0068);
  \filldraw[fill=darkblue, draw=white, line width=0.12pt] (0.63799,0.36201) circle (0.0068);
  \filldraw[fill=darkblue, draw=white, line width=0.12pt] (1.00000,0.00000) circle (0.0068);
  \filldraw[fill=darkblue, draw=white, line width=0.12pt] (0.20883,0.79117) circle (0.0068);
  \filldraw[fill=darkblue, draw=white, line width=0.12pt] (0.52521,0.47479) circle (0.0068);
  \filldraw[fill=darkblue, draw=white, line width=0.12pt] (0.87849,0.12151) circle (0.0068);
  \filldraw[fill=darkblue, draw=white, line width=0.12pt] (0.10456,0.89544) circle (0.0068);
  \filldraw[fill=darkblue, draw=white, line width=0.12pt] (0.75669,0.24331) circle (0.0068);
  \filldraw[fill=darkblue, draw=white, line width=0.12pt] (0.41767,0.58233) circle (0.0068);
  \filldraw[fill=darkblue, draw=white, line width=0.12pt] (0.00000,1.00000) circle (0.0068);
  \filldraw[fill=darkblue, draw=white, line width=0.12pt] (0.31488,0.68512) circle (0.0068);
  \filldraw[fill=darkblue, draw=white, line width=0.12pt] (0.64450,0.35550) circle (0.0068);
  \filldraw[fill=darkblue, draw=white, line width=0.12pt] (1.00000,0.00000) circle (0.0068);
  \filldraw[fill=darkblue, draw=white, line width=0.12pt] (0.20935,0.79065) circle (0.0068);
  \filldraw[fill=darkblue, draw=white, line width=0.12pt] (0.53158,0.46842) circle (0.0068);
  \filldraw[fill=darkblue, draw=white, line width=0.12pt] (0.87974,0.12026) circle (0.0068);
  \filldraw[fill=darkblue, draw=white, line width=0.12pt] (0.10454,0.89546) circle (0.0068);
  \filldraw[fill=darkblue, draw=white, line width=0.12pt] (0.76082,0.23918) circle (0.0068);
  \filldraw[fill=darkblue, draw=white, line width=0.12pt] (0.42196,0.57804) circle (0.0068);
  \filldraw[fill=darkblue, draw=white, line width=0.12pt] (0.00000,1.00000) circle (0.0068);
  \filldraw[fill=darkblue, draw=white, line width=0.12pt] (0.31741,0.68259) circle (0.0068);
  \filldraw[fill=darkblue, draw=white, line width=0.12pt] (0.64883,0.35117) circle (0.0068);
  \filldraw[fill=darkblue, draw=white, line width=0.12pt] (1.00000,0.00000) circle (0.0068);
  \filldraw[fill=darkblue, draw=white, line width=0.12pt] (0.21078,0.78922) circle (0.0068);
  \filldraw[fill=darkblue, draw=white, line width=0.12pt] (0.53597,0.46403) circle (0.0068);
  \filldraw[fill=darkblue, draw=white, line width=0.12pt] (0.88153,0.11847) circle (0.0068);
  \filldraw[fill=darkblue, draw=white, line width=0.12pt] (0.10516,0.89484) circle (0.0068);
  \filldraw[fill=darkblue, draw=white, line width=0.12pt] (0.76418,0.23583) circle (0.0068);
  \filldraw[fill=darkblue, draw=white, line width=0.12pt] (0.42561,0.57439) circle (0.0068);
  \filldraw[fill=darkblue, draw=white, line width=0.12pt] (0.03000,0.27000) circle (0.0075);
  \filldraw[fill=darkblue, draw=white, line width=0.12pt] (0.08000,0.22000) circle (0.0075);
  \filldraw[fill=darkblue, draw=white, line width=0.12pt] (0.13000,0.17000) circle (0.0075);
  \filldraw[fill=darkblue, draw=white, line width=0.12pt] (0.18000,0.12000) circle (0.0075);
  \filldraw[fill=darkgreen, draw=white, line width=0.12pt] (0.05000,0.19000) circle (0.0075);
  \filldraw[fill=darkgreen, draw=white, line width=0.12pt] (0.10000,0.14000) circle (0.0075);
  \filldraw[fill=darkgreen, draw=white, line width=0.12pt] (0.15000,0.09000) circle (0.0075);
  \filldraw[fill=darkorange, draw=white, line width=0.12pt] (0.07000,0.11000) circle (0.0075);
  \filldraw[fill=darkorange, draw=white, line width=0.12pt] (0.12000,0.06000) circle (0.0075);
  \filldraw[fill=darkorange, draw=white, line width=0.12pt] (0.09000,0.03000) circle (0.0075);
  \fill[red!80!black] (0.00000,1.00000) circle (0.0095);
  \fill[red!80!black] (0.31981,0.68019) circle (0.0095);
  \fill[red!80!black] (0.65212,0.34788) circle (0.0095);
  \fill[red!80!black] (1.00000,0.00000) circle (0.0095);
  \fill[red!80!black] (0.21238,0.78762) circle (0.0095);
  \fill[red!80!black] (0.53940,0.46060) circle (0.0095);
  \fill[red!80!black] (0.88296,0.11704) circle (0.0095);
  \fill[red!80!black] (0.10594,0.89406) circle (0.0095);
  \fill[red!80!black] (0.76677,0.23323) circle (0.0095);
  \fill[red!80!black] (0.42869,0.57131) circle (0.0095);
  \node[below] at (0.5,-0.07) {$F_1$};
  \node[rotate=90] at (-0.09,0.5) {$F_2$};
\end{tikzpicture}
\par\smallskip{\scriptsize (b) Dominated triangular start: $4+3+2+1$ points in the subtriangle with vertices $(0,0)$, $(0,0.5)$, and $(0.5,0)$.}
\end{minipage}
\caption{Ten-point triangle example with two different initializations. In both panels, the blue segment is the Pareto front; rank-colored markers and red points mark the sampled nonfinal iterates and last plotted iterates, respectively. Path colors progress from early to late segments, and arrow tips indicate increasing iteration. The marker colors encode the current dominance rank using the palette of Figure~\ref{fig:intro-layered-example}: blue for layer~1, green for layer~2, and orange for layer~3 or deeper.}
\label{fig:ten-point-paths}
\end{figure}

\subsection{A summed quadratic biobjective problem with ten points}

We next consider a smooth biobjective optimization problem in the decision space
\[
\Omega=[-2,2]^2.
\]
For $x=(x_1,x_2)\in\Omega$, define the component functions
\[
f_1(x)=\frac12\bigl(1-(x_1-1)^2\bigr)+\frac12\bigl(1-(x_2-1)^2\bigr),
\]
\[
f_2(x)=\frac12\bigl(1-x_1^2\bigr)+\frac12\bigl(1-x_2^2\bigr),
\]
and write the corresponding objective vector value as $F(x)=(F_1,F_2)=(f_1(x),f_2(x))$.
Thus each coordinate contributes a term favoring $x_i=1$ in the first objective and $x_i=0$ in the second. The problem is smooth in decision space, but its efficient set and Pareto front can still be described explicitly.

\begin{proposition}
For the above problem, the efficient set is
\[
\mathcal E=\{(t,t):\,t\in[0,1]\},
\]
and the Pareto front is the curve
\[
\mathcal F
=
\bigl\{(2t-t^2,\ 1-t^2):\, t\in[0,1]\bigr\}.
\]
\end{proposition}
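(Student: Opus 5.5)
The plan is to rewrite the objectives as negative squared distances and use the geometry of two Euclidean balls. Expanding gives
\[
f_1(x)=1-\tfrac12\|x-c_1\|_2^2,\qquad f_2(x)=1-\tfrac12\|x-c_2\|_2^2,\qquad c_1=(1,1),\ c_2=(0,0).
\]
Maximizing $(f_1,f_2)$ over $\Omega=[-2,2]^2$ is therefore equivalent to simultaneously minimizing the distances $d_1(x)=\|x-c_1\|_2$ and $d_2(x)=\|x-c_2\|_2$. Both are strictly increasing transformations of the objectives, so Pareto dominance is preserved. The claim then reduces to the classical fact that the efficient set of two Euclidean distance functions is the segment $[c_2,c_1]=\{(t,t):t\in[0,1]\}$. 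The segment lies in $\Omega$, so the box constraint will turn out to be inactive.

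\emph{Step 1: efficient points lie on the diagonal.} For $x\in\Omega$, write $x=p+q$, where $p=(s,s)$ with $s=(x_1+x_2)/2$ is the orthogonal projection onto the line through $c_1,c_2$, and $q\perp(1,1)$. By Pythagoras, $d_j(x)^2=d_j(p)^2+\|q\|^2$ for $j=1,2$. Hence if $q\neq0$, the point $p$ strictly improves both objectives. Moreover $p\in\Omega$, since $s\in[-2,2]$. So $x$ is strictly dominated within $\Omega$.

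\emph{Step 2: restrict to the diagonal.} For $x=(t,t)$ the objectives are $1-(t-1)^2$ and $1-t^2$. If $t<0$, then $(0,0)$ strictly improves both objectives. If $t>1$, then $(1,1)$ does. For $t\in[0,1]$, the first objective is strictly increasing in $t$ and the second strictly decreasing, so no other diagonal point dominates $(t,t)$. By Step 1, no off-diagonal point dominates it either: any off-diagonal $y$ is beaten strictly in both objectives by its projection $p$, and $p$ does not dominate $(t,t)$. This shows $\mathcal E=\{(t,t):t\in[0,1]\}$.

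\emph{Step 3: compute the front.} Evaluating $F$ on $\mathcal E$ gives $F(t,t)=(1-(t-1)^2,\,1-t^2)=(2t-t^2,\,1-t^2)$, which is the stated curve $\mathcal F$.

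The only point needing care is the transitivity used at the end of Step 2. If an off-diagonal $y$ dominated $(t,t)$, then its projection $p$ would also dominate $(t,t)$, with strict improvement in both objectives. That contradicts the monotone trade-off along the diagonal. Everything else is routine algebra.
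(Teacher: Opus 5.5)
Your proof is correct and is essentially the paper's argument in geometric form. Your orthogonal projection $p=(s,s)$ is exactly the paper's diagonal average $(t,t)$, and your Pythagoras identity is its inequality $x_1^2+x_2^2\ge 2t^2$; you simply do the restriction to $[0,1]$ along the diagonal rather than coordinatewise on the square beforehand. You also explicitly check that each $(t,t)$ with $t\in[0,1]$ is nondominated, using the monotone trade-off on the diagonal plus the projection/transitivity step, which the paper's proof asserts without verification, so your version is the more complete of the two.
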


\begin{proof}
If $x_i<0$, then increasing $x_i$ toward $0$ improves both
\[
\frac12(1-(x_i-1)^2)
\qquad\text{and}\qquad
\frac12(1-x_i^2),
\]
so no efficient point can have $x_i<0$. Likewise, if $x_i>1$, then decreasing $x_i$ toward $1$ improves both objectives, so no efficient point can have $x_i>1$. Hence every efficient point lies in $[0,1]^2$.

Now let $x=(x_1,x_2)\in[0,1]^2$, and set
\[
t:=\frac{x_1+x_2}{2}.
\]
Then
\[
f_2(x)=1-\frac{x_1^2+x_2^2}{2},
\qquad
f_1(x)=x_1+x_2-\frac{x_1^2+x_2^2}{2}.
\]
Since
\[
x_1^2+x_2^2 \ge 2t^2,
\]
with equality if and only if $x_1=x_2=t$, it follows that
\[
f_2(t,t)\ge f_2(x),
\qquad
f_1(t,t)\ge f_1(x),
\]
and the inequalities are strict unless $x_1=x_2$. Thus every off-diagonal point in $[0,1]^2$ is dominated by its diagonal average. Hence the efficient set is exactly the stated diagonal segment, and the stated front formula follows by substitution.
\end{proof}

This example is useful because it is smooth in decision space, has a one-dimensional Pareto set, and still makes the nonsmooth layered optimization visible through changes in the active dominated-set geometry. The two objective-space runs in Figure~\ref{fig:ten-point-paths} already illustrate that the method can start either from a fully nondominated interior set or from a dominated multilayer set. To make the nonlinear geometry in decision space more visible, we now choose a perturbed interior starting set.

\begin{example}[Perturbed ten-point gradient diffusion]
We initialize ten points in decision space by
\[
(0.10,0.74),\ (0.18,0.49),\ (0.12,0.61),\ (0.33,0.58),\ (0.46,0.28),
\]
\[
(0.41,0.45),\ (0.63,0.12),\ (0.57,0.26),\ (0.71,0.33),\ (0.82,0.08).
\]
All these points are feasible, mutually distinct, and lie away from the efficient diagonal. We then apply the implemented projected finite-difference ascent scheme in decision space: the layered objective is evaluated in objective space after mapping the decision points through $F$, symmetric finite differences are taken with respect to the decision coordinates, the resulting directions are normalized separately for each point, and small projected steps are taken in the box $[0,1]^2$. This tighter projection is sufficient here because Proposition~1 shows that all efficient points lie in $[0,1]^2$, so clipping the run to this box keeps the iterates in the relevant region while avoiding excursions that would only clutter the figure. In the implementation used for this example we employ the setting
\[
\quad \alpha=0.004,\quad \varepsilon=10^{-3},\qquad \tau=2\cdot 10^{-4},\quad \sigma=0.03,\quad h=10^{-5},\quad T=540,
\]
which is an example-specific override of the generic defaults listed in Appendix~\ref{app:algorithm}. Thus the numerical scheme should be read as a Clarke-motivated finite-difference diffusion rather than as a full projected subgradient method with convergence guarantees. Whenever the geometry is smooth, it reduces to an ordinary small-step ascent in decision space. At points where the active dominated-set geometry changes, the Clarke viewpoint remains the appropriate first-order interpretation.
\end{example}

\begin{figure}[t]
\centering
\begin{minipage}{0.48\textwidth}
\centering
\begin{tikzpicture}[scale=4.9, every node/.style={font=\scriptsize}]
  \draw[very thin,gray!25] (0.000,0.000) grid[step=0.1] (1.000,1.000);
  \draw[thick] (0.000,0.000) rectangle (1.000,1.000);
  \draw[ultra thick,blue!70!black] (0.000,0.000) -- (1.000,1.000);
  \foreach \x/\lab in {0/0,1/1} {\draw[thin,gray!65] (\x,0) -- +(0,-0.014); \node[below,font=\scriptsize] at (\x,-0.022) {\lab};}
  \foreach \y/\lab in {0/0,1/1} {\draw[thin,gray!65] (0,\y) -- +(-0.014,0); \node[left,font=\scriptsize] at (-0.022,\y) {\lab};}
  \node[above] at (0.5,1.03) {decision space};
  \draw[gray!70, line width=0.56pt, -{Latex[length=1.05mm,width=0.75mm]}] (0.100,0.740) -- (0.312,0.844);
  \draw[gray!62, line width=0.56pt, -{Latex[length=1.05mm,width=0.75mm]}] (0.312,0.844) -- (0.561,0.943);
  \draw[blue!40, line width=0.56pt, -{Latex[length=1.05mm,width=0.75mm]}] (0.561,0.943) -- (0.686,0.995);
  \draw[cyan!50!black, line width=0.56pt, -{Latex[length=1.05mm,width=0.75mm]}] (0.686,0.995) -- (0.744,0.958);
  \draw[teal!60!black, line width=0.56pt, -{Latex[length=1.05mm,width=0.75mm]}] (0.744,0.958) -- (0.860,0.860);
  \draw[green!50!black, line width=0.56pt] (0.860,0.860) -- (0.860,0.860);
  \draw[orange!70!black, line width=0.56pt, -{Latex[length=1.05mm,width=0.75mm]}] (0.860,0.860) -- (0.863,0.863);
  \draw[red!75!black, line width=0.56pt] (0.863,0.863) -- (0.863,0.863);
  \draw[gray!70, line width=0.56pt, -{Latex[length=1.05mm,width=0.75mm]}] (0.180,0.490) -- (0.093,0.237);
  \draw[gray!62, line width=0.56pt, -{Latex[length=1.05mm,width=0.75mm]}] (0.093,0.237) -- (0.011,0.011);
  \draw[blue!40, line width=0.56pt, -{Latex[length=1.05mm,width=0.75mm]}] (0.011,0.011) -- (0.014,0.014);
  \draw[cyan!50!black, line width=0.56pt] (0.014,0.014) -- (0.014,0.014);
  \draw[teal!60!black, line width=0.56pt, -{Latex[length=1.05mm,width=0.75mm]}] (0.014,0.014) -- (0.011,0.011);
  \draw[green!50!black, line width=0.56pt] (0.011,0.011) -- (0.011,0.011);
  \draw[orange!70!black, line width=0.56pt, -{Latex[length=1.05mm,width=0.75mm]}] (0.011,0.011) -- (0.014,0.014);
  \draw[red!75!black, line width=0.56pt] (0.014,0.014) -- (0.014,0.014);
  \draw[gray!70, line width=0.56pt, -{Latex[length=1.05mm,width=0.75mm]}] (0.120,0.610) -- (0.027,0.417);
  \draw[gray!62, line width=0.56pt, -{Latex[length=1.05mm,width=0.75mm]}] (0.027,0.417) -- (0.144,0.179);
  \draw[blue!40, line width=0.56pt, -{Latex[length=1.05mm,width=0.75mm]}] (0.144,0.179) -- (0.155,0.155);
  \draw[cyan!50!black, line width=0.56pt, -{Latex[length=1.05mm,width=0.75mm]}] (0.155,0.155) -- (0.161,0.161);
  \draw[teal!60!black, line width=0.56pt, -{Latex[length=1.05mm,width=0.75mm]}] (0.161,0.161) -- (0.163,0.163);
  \draw[green!50!black, line width=0.56pt] (0.163,0.163) -- (0.163,0.163);
  \draw[orange!70!black, line width=0.56pt, -{Latex[length=1.05mm,width=0.75mm]}] (0.163,0.163) -- (0.161,0.161);
  \draw[red!75!black, line width=0.56pt] (0.161,0.161) -- (0.161,0.161);
  \draw[gray!70, line width=0.56pt, -{Latex[length=1.05mm,width=0.75mm]}] (0.330,0.580) -- (0.586,0.586);
  \draw[gray!62, line width=0.56pt, -{Latex[length=1.05mm,width=0.75mm]}] (0.586,0.586) -- (0.733,0.733);
  \draw[blue!40, line width=0.56pt, -{Latex[length=1.05mm,width=0.75mm]}] (0.733,0.733) -- (0.821,0.821);
  \draw[cyan!50!black, line width=0.56pt] (0.821,0.821) -- (0.821,0.821);
  \draw[teal!60!black, line width=0.56pt, -{Latex[length=1.05mm,width=0.75mm]}] (0.821,0.821) -- (0.761,0.761);
  \draw[green!50!black, line width=0.56pt] (0.761,0.761) -- (0.761,0.761);
  \draw[orange!70!black, line width=0.56pt, -{Latex[length=1.05mm,width=0.75mm]}] (0.761,0.761) -- (0.758,0.758);
  \draw[red!75!black, line width=0.56pt] (0.758,0.758) -- (0.758,0.758);
  \draw[gray!70, line width=0.56pt, -{Latex[length=1.05mm,width=0.75mm]}] (0.460,0.280) -- (0.288,0.288);
  \draw[gray!62, line width=0.56pt, -{Latex[length=1.05mm,width=0.75mm]}] (0.288,0.288) -- (0.283,0.283);
  \draw[blue!40, line width=0.56pt, -{Latex[length=1.05mm,width=0.75mm]}] (0.283,0.283) -- (0.274,0.274);
  \draw[cyan!50!black, line width=0.56pt, -{Latex[length=1.05mm,width=0.75mm]}] (0.274,0.274) -- (0.280,0.280);
  \draw[teal!60!black, line width=0.56pt, -{Latex[length=1.05mm,width=0.75mm]}] (0.280,0.280) -- (0.277,0.277);
  \draw[green!50!black, line width=0.56pt] (0.277,0.277) -- (0.277,0.277);
  \draw[orange!70!black, line width=0.56pt, -{Latex[length=1.05mm,width=0.75mm]}] (0.277,0.277) -- (0.280,0.280);
  \draw[red!75!black, line width=0.56pt] (0.280,0.280) -- (0.280,0.280);
  \draw[gray!70, line width=0.56pt, -{Latex[length=1.05mm,width=0.75mm]}] (0.410,0.450) -- (0.396,0.396);
  \draw[gray!62, line width=0.56pt, -{Latex[length=1.05mm,width=0.75mm]}] (0.396,0.396) -- (0.503,0.503);
  \draw[blue!40, line width=0.56pt, -{Latex[length=1.05mm,width=0.75mm]}] (0.503,0.503) -- (0.591,0.591);
  \draw[cyan!50!black, line width=0.56pt] (0.591,0.591) -- (0.591,0.591);
  \draw[teal!60!black, line width=0.56pt, -{Latex[length=1.05mm,width=0.75mm]}] (0.591,0.591) -- (0.577,0.577);
  \draw[green!50!black, line width=0.56pt] (0.577,0.577) -- (0.577,0.577);
  \draw[orange!70!black, line width=0.56pt, -{Latex[length=1.05mm,width=0.75mm]}] (0.577,0.577) -- (0.574,0.574);
  \draw[red!75!black, line width=0.56pt] (0.574,0.574) -- (0.574,0.574);
  \draw[gray!70, line width=0.56pt, -{Latex[length=1.05mm,width=0.75mm]}] (0.630,0.120) -- (0.539,0.179);
  \draw[gray!62, line width=0.56pt, -{Latex[length=1.05mm,width=0.75mm]}] (0.539,0.179) -- (0.406,0.379);
  \draw[blue!40, line width=0.56pt, -{Latex[length=1.05mm,width=0.75mm]}] (0.406,0.379) -- (0.484,0.484);
  \draw[cyan!50!black, line width=0.56pt, -{Latex[length=1.05mm,width=0.75mm]}] (0.484,0.484) -- (0.489,0.489);
  \draw[teal!60!black, line width=0.56pt, -{Latex[length=1.05mm,width=0.75mm]}] (0.489,0.489) -- (0.481,0.481);
  \draw[green!50!black, line width=0.56pt] (0.481,0.481) -- (0.481,0.481);
  \draw[orange!70!black, line width=0.56pt, -{Latex[length=1.05mm,width=0.75mm]}] (0.481,0.481) -- (0.484,0.484);
  \draw[red!75!black, line width=0.56pt] (0.484,0.484) -- (0.484,0.484);
  \draw[gray!70, line width=0.56pt, -{Latex[length=1.05mm,width=0.75mm]}] (0.570,0.260) -- (0.595,0.383);
  \draw[gray!62, line width=0.56pt, -{Latex[length=1.05mm,width=0.75mm]}] (0.595,0.383) -- (0.615,0.615);
  \draw[blue!40, line width=0.56pt, -{Latex[length=1.05mm,width=0.75mm]}] (0.615,0.615) -- (0.703,0.703);
  \draw[cyan!50!black, line width=0.56pt] (0.703,0.703) -- (0.703,0.703);
  \draw[teal!60!black, line width=0.56pt, -{Latex[length=1.05mm,width=0.75mm]}] (0.703,0.703) -- (0.666,0.666);
  \draw[green!50!black, line width=0.56pt] (0.666,0.666) -- (0.666,0.666);
  \draw[orange!70!black, line width=0.56pt, -{Latex[length=1.05mm,width=0.75mm]}] (0.666,0.666) -- (0.669,0.669);
  \draw[red!75!black, line width=0.56pt] (0.669,0.669) -- (0.669,0.669);
  \draw[gray!70, line width=0.56pt, -{Latex[length=1.05mm,width=0.75mm]}] (0.710,0.330) -- (0.813,0.578);
  \draw[gray!62, line width=0.56pt, -{Latex[length=1.05mm,width=0.75mm]}] (0.813,0.578) -- (0.916,0.829);
  \draw[blue!40, line width=0.56pt, -{Latex[length=1.05mm,width=0.75mm]}] (0.916,0.829) -- (0.986,0.986);
  \draw[cyan!50!black, line width=0.56pt] (0.986,0.986) -- (0.986,0.986);
  \draw[teal!60!black, line width=0.56pt, -{Latex[length=1.05mm,width=0.75mm]}] (0.986,0.986) -- (0.989,0.989);
  \draw[green!50!black, line width=0.56pt, -{Latex[length=1.05mm,width=0.75mm]}] (0.989,0.989) -- (0.992,0.992);
  \draw[orange!70!black, line width=0.56pt, -{Latex[length=1.05mm,width=0.75mm]}] (0.992,0.992) -- (0.990,0.990);
  \draw[red!75!black, line width=0.56pt] (0.990,0.990) -- (0.990,0.990);
  \draw[gray!70, line width=0.56pt, -{Latex[length=1.05mm,width=0.75mm]}] (0.820,0.080) -- (0.830,0.134);
  \draw[gray!62, line width=0.56pt, -{Latex[length=1.05mm,width=0.75mm]}] (0.830,0.134) -- (0.566,0.136);
  \draw[blue!40, line width=0.56pt, -{Latex[length=1.05mm,width=0.75mm]}] (0.566,0.136) -- (0.418,0.341);
  \draw[cyan!50!black, line width=0.56pt, -{Latex[length=1.05mm,width=0.75mm]}] (0.418,0.341) -- (0.383,0.383);
  \draw[teal!60!black, line width=0.56pt, -{Latex[length=1.05mm,width=0.75mm]}] (0.383,0.383) -- (0.385,0.385);
  \draw[green!50!black, line width=0.56pt] (0.385,0.385) -- (0.385,0.385);
  \draw[orange!70!black, line width=0.56pt, -{Latex[length=1.05mm,width=0.75mm]}] (0.385,0.385) -- (0.383,0.383);
  \draw[red!75!black, line width=0.56pt] (0.383,0.383) -- (0.383,0.383);
  \filldraw[fill=darkgreen, draw=white, line width=0.12pt] (0.31200,0.84400) circle (0.0105);
  \filldraw[fill=darkblue, draw=white, line width=0.12pt] (0.09300,0.23700) circle (0.0105);
  \filldraw[fill=darkgreen, draw=white, line width=0.12pt] (0.02700,0.41700) circle (0.0105);
  \filldraw[fill=darkblue, draw=white, line width=0.12pt] (0.58600,0.58600) circle (0.0105);
  \filldraw[fill=darkblue, draw=white, line width=0.12pt] (0.28800,0.28800) circle (0.0105);
  \filldraw[fill=darkblue, draw=white, line width=0.12pt] (0.39600,0.39600) circle (0.0105);
  \filldraw[fill=darkgreen, draw=white, line width=0.12pt] (0.53900,0.17900) circle (0.0105);
  \filldraw[fill=darkblue, draw=white, line width=0.12pt] (0.59500,0.38300) circle (0.0105);
  \filldraw[fill=darkblue, draw=white, line width=0.12pt] (0.81300,0.57800) circle (0.0105);
  \filldraw[fill=darkgreen, draw=white, line width=0.12pt] (0.83000,0.13400) circle (0.0105);
  \filldraw[fill=darkgreen, draw=white, line width=0.12pt] (0.56100,0.94300) circle (0.0105);
  \filldraw[fill=darkblue, draw=white, line width=0.12pt] (0.01100,0.01100) circle (0.0105);
  \filldraw[fill=darkblue, draw=white, line width=0.12pt] (0.14400,0.17900) circle (0.0105);
  \filldraw[fill=darkblue, draw=white, line width=0.12pt] (0.73300,0.73300) circle (0.0105);
  \filldraw[fill=darkblue, draw=white, line width=0.12pt] (0.28300,0.28300) circle (0.0105);
  \filldraw[fill=darkblue, draw=white, line width=0.12pt] (0.50300,0.50300) circle (0.0105);
  \filldraw[fill=darkblue, draw=white, line width=0.12pt] (0.40600,0.37900) circle (0.0105);
  \filldraw[fill=darkblue, draw=white, line width=0.12pt] (0.61500,0.61500) circle (0.0105);
  \filldraw[fill=darkblue, draw=white, line width=0.12pt] (0.91600,0.82900) circle (0.0105);
  \filldraw[fill=darkgreen, draw=white, line width=0.12pt] (0.56600,0.13600) circle (0.0105);
  \filldraw[fill=darkgreen, draw=white, line width=0.12pt] (0.68600,0.99500) circle (0.0105);
  \filldraw[fill=darkblue, draw=white, line width=0.12pt] (0.01400,0.01400) circle (0.0105);
  \filldraw[fill=darkblue, draw=white, line width=0.12pt] (0.15500,0.15500) circle (0.0105);
  \filldraw[fill=darkblue, draw=white, line width=0.12pt] (0.82100,0.82100) circle (0.0105);
  \filldraw[fill=darkblue, draw=white, line width=0.12pt] (0.27400,0.27400) circle (0.0105);
  \filldraw[fill=darkblue, draw=white, line width=0.12pt] (0.59100,0.59100) circle (0.0105);
  \filldraw[fill=darkblue, draw=white, line width=0.12pt] (0.48400,0.48400) circle (0.0105);
  \filldraw[fill=darkblue, draw=white, line width=0.12pt] (0.70300,0.70300) circle (0.0105);
  \filldraw[fill=darkblue, draw=white, line width=0.12pt] (0.98600,0.98600) circle (0.0105);
  \filldraw[fill=darkblue, draw=white, line width=0.12pt] (0.41800,0.34100) circle (0.0105);
  \filldraw[fill=darkgreen, draw=white, line width=0.12pt] (0.74400,0.95800) circle (0.0105);
  \filldraw[fill=darkblue, draw=white, line width=0.12pt] (0.01400,0.01400) circle (0.0105);
  \filldraw[fill=darkblue, draw=white, line width=0.12pt] (0.16100,0.16100) circle (0.0105);
  \filldraw[fill=darkblue, draw=white, line width=0.12pt] (0.82100,0.82100) circle (0.0105);
  \filldraw[fill=darkblue, draw=white, line width=0.12pt] (0.28000,0.28000) circle (0.0105);
  \filldraw[fill=darkblue, draw=white, line width=0.12pt] (0.59100,0.59100) circle (0.0105);
  \filldraw[fill=darkblue, draw=white, line width=0.12pt] (0.48900,0.48900) circle (0.0105);
  \filldraw[fill=darkblue, draw=white, line width=0.12pt] (0.70300,0.70300) circle (0.0105);
  \filldraw[fill=darkblue, draw=white, line width=0.12pt] (0.98600,0.98600) circle (0.0105);
  \filldraw[fill=darkblue, draw=white, line width=0.12pt] (0.38300,0.38300) circle (0.0105);
  \filldraw[fill=darkblue, draw=white, line width=0.12pt] (0.86000,0.86000) circle (0.0105);
  \filldraw[fill=darkblue, draw=white, line width=0.12pt] (0.01100,0.01100) circle (0.0105);
  \filldraw[fill=darkblue, draw=white, line width=0.12pt] (0.16300,0.16300) circle (0.0105);
  \filldraw[fill=darkblue, draw=white, line width=0.12pt] (0.76100,0.76100) circle (0.0105);
  \filldraw[fill=darkblue, draw=white, line width=0.12pt] (0.27700,0.27700) circle (0.0105);
  \filldraw[fill=darkblue, draw=white, line width=0.12pt] (0.57700,0.57700) circle (0.0105);
  \filldraw[fill=darkblue, draw=white, line width=0.12pt] (0.48100,0.48100) circle (0.0105);
  \filldraw[fill=darkblue, draw=white, line width=0.12pt] (0.66600,0.66600) circle (0.0105);
  \filldraw[fill=darkblue, draw=white, line width=0.12pt] (0.98900,0.98900) circle (0.0105);
  \filldraw[fill=darkblue, draw=white, line width=0.12pt] (0.38500,0.38500) circle (0.0105);
  \filldraw[fill=darkblue, draw=white, line width=0.12pt] (0.86000,0.86000) circle (0.0105);
  \filldraw[fill=darkblue, draw=white, line width=0.12pt] (0.01100,0.01100) circle (0.0105);
  \filldraw[fill=darkblue, draw=white, line width=0.12pt] (0.16300,0.16300) circle (0.0105);
  \filldraw[fill=darkblue, draw=white, line width=0.12pt] (0.76100,0.76100) circle (0.0105);
  \filldraw[fill=darkblue, draw=white, line width=0.12pt] (0.27700,0.27700) circle (0.0105);
  \filldraw[fill=darkblue, draw=white, line width=0.12pt] (0.57700,0.57700) circle (0.0105);
  \filldraw[fill=darkblue, draw=white, line width=0.12pt] (0.48100,0.48100) circle (0.0105);
  \filldraw[fill=darkblue, draw=white, line width=0.12pt] (0.66600,0.66600) circle (0.0105);
  \filldraw[fill=darkblue, draw=white, line width=0.12pt] (0.99200,0.99200) circle (0.0105);
  \filldraw[fill=darkblue, draw=white, line width=0.12pt] (0.38500,0.38500) circle (0.0105);
  \filldraw[fill=darkblue, draw=white, line width=0.12pt] (0.86300,0.86300) circle (0.0105);
  \filldraw[fill=darkblue, draw=white, line width=0.12pt] (0.01400,0.01400) circle (0.0105);
  \filldraw[fill=darkblue, draw=white, line width=0.12pt] (0.16100,0.16100) circle (0.0105);
  \filldraw[fill=darkblue, draw=white, line width=0.12pt] (0.75800,0.75800) circle (0.0105);
  \filldraw[fill=darkblue, draw=white, line width=0.12pt] (0.28000,0.28000) circle (0.0105);
  \filldraw[fill=darkblue, draw=white, line width=0.12pt] (0.57400,0.57400) circle (0.0105);
  \filldraw[fill=darkblue, draw=white, line width=0.12pt] (0.48400,0.48400) circle (0.0105);
  \filldraw[fill=darkblue, draw=white, line width=0.12pt] (0.66900,0.66900) circle (0.0105);
  \filldraw[fill=darkblue, draw=white, line width=0.12pt] (0.99000,0.99000) circle (0.0105);
  \filldraw[fill=darkblue, draw=white, line width=0.12pt] (0.38300,0.38300) circle (0.0105);
  \filldraw[fill=darkorange, draw=white, line width=0.12pt] (0.100,0.740) circle (0.014);
  \filldraw[fill=darkblue, draw=white, line width=0.12pt] (0.180,0.490) circle (0.014);
  \filldraw[fill=darkgreen, draw=white, line width=0.12pt] (0.120,0.610) circle (0.014);
  \filldraw[fill=darkblue, draw=white, line width=0.12pt] (0.330,0.580) circle (0.014);
  \filldraw[fill=darkblue, draw=white, line width=0.12pt] (0.460,0.280) circle (0.014);
  \filldraw[fill=darkblue, draw=white, line width=0.12pt] (0.410,0.450) circle (0.014);
  \filldraw[fill=darkorange, draw=white, line width=0.12pt] (0.630,0.120) circle (0.014);
  \filldraw[fill=darkgreen, draw=white, line width=0.12pt] (0.570,0.260) circle (0.014);
  \filldraw[fill=darkblue, draw=white, line width=0.12pt] (0.710,0.330) circle (0.014);
  \filldraw[fill=darkorange, draw=white, line width=0.12pt] (0.820,0.080) circle (0.014);
  \fill[red!80!black] (0.863,0.863) circle (0.018);
  \fill[red!80!black] (0.014,0.014) circle (0.018);
  \fill[red!80!black] (0.161,0.161) circle (0.018);
  \fill[red!80!black] (0.758,0.758) circle (0.018);
  \fill[red!80!black] (0.280,0.280) circle (0.018);
  \fill[red!80!black] (0.574,0.574) circle (0.018);
  \fill[red!80!black] (0.484,0.484) circle (0.018);
  \fill[red!80!black] (0.669,0.669) circle (0.018);
  \fill[red!80!black] (0.990,0.990) circle (0.018);
  \fill[red!80!black] (0.383,0.383) circle (0.018);
  \node[below] at (0.5,-0.09) {$x_1$};
  \node[rotate=90] at (-0.12,0.5) {$x_2$};
\end{tikzpicture}
\end{minipage}\hfill
\begin{minipage}{0.48\textwidth}
\centering
\begin{tikzpicture}[scale=4.9, every node/.style={font=\scriptsize}]
  \draw[very thin,gray!25] (0.000,0.000) grid[step=0.1] (1.000,1.000);
  \draw[thick] (0.000,0.000) rectangle (1.000,1.000);
  \draw[ultra thick,blue!70!black] plot[smooth] coordinates {(0.000,1.000) (0.190,0.990) (0.360,0.960) (0.510,0.910) (0.640,0.840) (0.750,0.750) (0.840,0.640) (0.910,0.510) (0.960,0.360) (0.990,0.190) (1.000,0.000)};
  \foreach \x/\lab in {0/0,1/1} {\draw[thin,gray!65] (\x,0) -- +(0,-0.014); \node[below,font=\scriptsize] at (\x,-0.022) {\lab};}
  \foreach \y/\lab in {0/0,1/1} {\draw[thin,gray!65] (0,\y) -- +(-0.014,0); \node[left,font=\scriptsize] at (-0.022,\y) {\lab};}
  \node[above] at (0.5,1.03) {objective space};
  \draw[gray!70, line width=0.56pt, -{Latex[length=1.05mm,width=0.75mm]}] (0.561,0.721) -- (0.751,0.595);
  \draw[gray!62, line width=0.56pt, -{Latex[length=1.05mm,width=0.75mm]}] (0.751,0.595) -- (0.902,0.398);
  \draw[blue!40, line width=0.56pt, -{Latex[length=1.05mm,width=0.75mm]}] (0.902,0.398) -- (0.951,0.269);
  \draw[cyan!50!black, line width=0.56pt, -{Latex[length=1.05mm,width=0.75mm]}] (0.951,0.269) -- (0.966,0.264);
  \draw[teal!60!black, line width=0.56pt, -{Latex[length=1.05mm,width=0.75mm]}] (0.966,0.264) -- (0.980,0.260);
  \draw[green!50!black, line width=0.56pt] (0.980,0.260) -- (0.980,0.260);
  \draw[orange!70!black, line width=0.56pt, -{Latex[length=1.05mm,width=0.75mm]}] (0.980,0.260) -- (0.981,0.255);
  \draw[red!75!black, line width=0.56pt] (0.981,0.255) -- (0.981,0.255);
  \draw[gray!70, line width=0.56pt, -{Latex[length=1.05mm,width=0.75mm]}] (0.534,0.864) -- (0.297,0.968);
  \draw[gray!62, line width=0.56pt, -{Latex[length=1.05mm,width=0.75mm]}] (0.297,0.968) -- (0.022,1.000);
  \draw[blue!40, line width=0.56pt, -{Latex[length=1.05mm,width=0.75mm]}] (0.022,1.000) -- (0.028,1.000);
  \draw[cyan!50!black, line width=0.56pt] (0.028,1.000) -- (0.028,1.000);
  \draw[teal!60!black, line width=0.56pt, -{Latex[length=1.05mm,width=0.75mm]}] (0.028,1.000) -- (0.022,1.000);
  \draw[green!50!black, line width=0.56pt] (0.022,1.000) -- (0.022,1.000);
  \draw[orange!70!black, line width=0.56pt, -{Latex[length=1.05mm,width=0.75mm]}] (0.022,1.000) -- (0.028,1.000);
  \draw[red!75!black, line width=0.56pt] (0.028,1.000) -- (0.028,1.000);
  \draw[gray!70, line width=0.56pt, -{Latex[length=1.05mm,width=0.75mm]}] (0.537,0.807) -- (0.357,0.913);
  \draw[gray!62, line width=0.56pt, -{Latex[length=1.05mm,width=0.75mm]}] (0.357,0.913) -- (0.296,0.974);
  \draw[blue!40, line width=0.56pt, -{Latex[length=1.05mm,width=0.75mm]}] (0.296,0.974) -- (0.286,0.976);
  \draw[cyan!50!black, line width=0.56pt, -{Latex[length=1.05mm,width=0.75mm]}] (0.286,0.976) -- (0.295,0.974);
  \draw[teal!60!black, line width=0.56pt, -{Latex[length=1.05mm,width=0.75mm]}] (0.295,0.974) -- (0.300,0.973);
  \draw[green!50!black, line width=0.56pt] (0.300,0.973) -- (0.300,0.973);
  \draw[orange!70!black, line width=0.56pt, -{Latex[length=1.05mm,width=0.75mm]}] (0.300,0.973) -- (0.295,0.974);
  \draw[red!75!black, line width=0.56pt] (0.295,0.974) -- (0.295,0.974);
  \draw[gray!70, line width=0.56pt, -{Latex[length=1.05mm,width=0.75mm]}] (0.687,0.777) -- (0.828,0.657);
  \draw[gray!62, line width=0.56pt, -{Latex[length=1.05mm,width=0.75mm]}] (0.828,0.657) -- (0.929,0.463);
  \draw[blue!40, line width=0.56pt, -{Latex[length=1.05mm,width=0.75mm]}] (0.929,0.463) -- (0.968,0.327);
  \draw[cyan!50!black, line width=0.56pt] (0.968,0.327) -- (0.968,0.327);
  \draw[teal!60!black, line width=0.56pt, -{Latex[length=1.05mm,width=0.75mm]}] (0.968,0.327) -- (0.943,0.421);
  \draw[green!50!black, line width=0.56pt] (0.943,0.421) -- (0.943,0.421);
  \draw[orange!70!black, line width=0.56pt, -{Latex[length=1.05mm,width=0.75mm]}] (0.943,0.421) -- (0.942,0.425);
  \draw[red!75!black, line width=0.56pt] (0.942,0.425) -- (0.942,0.425);
  \draw[gray!70, line width=0.56pt, -{Latex[length=1.05mm,width=0.75mm]}] (0.595,0.855) -- (0.494,0.917);
  \draw[gray!62, line width=0.56pt, -{Latex[length=1.05mm,width=0.75mm]}] (0.494,0.917) -- (0.486,0.920);
  \draw[blue!40, line width=0.56pt, -{Latex[length=1.05mm,width=0.75mm]}] (0.486,0.920) -- (0.473,0.925);
  \draw[cyan!50!black, line width=0.56pt, -{Latex[length=1.05mm,width=0.75mm]}] (0.473,0.925) -- (0.481,0.922);
  \draw[teal!60!black, line width=0.56pt, -{Latex[length=1.05mm,width=0.75mm]}] (0.481,0.922) -- (0.477,0.923);
  \draw[green!50!black, line width=0.56pt] (0.477,0.923) -- (0.477,0.923);
  \draw[orange!70!black, line width=0.56pt, -{Latex[length=1.05mm,width=0.75mm]}] (0.477,0.923) -- (0.481,0.922);
  \draw[red!75!black, line width=0.56pt] (0.481,0.922) -- (0.481,0.922);
  \draw[gray!70, line width=0.56pt, -{Latex[length=1.05mm,width=0.75mm]}] (0.675,0.815) -- (0.635,0.843);
  \draw[gray!62, line width=0.56pt, -{Latex[length=1.05mm,width=0.75mm]}] (0.635,0.843) -- (0.753,0.747);
  \draw[blue!40, line width=0.56pt, -{Latex[length=1.05mm,width=0.75mm]}] (0.753,0.747) -- (0.833,0.651);
  \draw[cyan!50!black, line width=0.56pt] (0.833,0.651) -- (0.833,0.651);
  \draw[teal!60!black, line width=0.56pt, -{Latex[length=1.05mm,width=0.75mm]}] (0.833,0.651) -- (0.821,0.667);
  \draw[green!50!black, line width=0.56pt] (0.821,0.667) -- (0.821,0.667);
  \draw[orange!70!black, line width=0.56pt, -{Latex[length=1.05mm,width=0.75mm]}] (0.821,0.667) -- (0.819,0.670);
  \draw[red!75!black, line width=0.56pt] (0.819,0.670) -- (0.819,0.670);
  \draw[gray!70, line width=0.56pt, -{Latex[length=1.05mm,width=0.75mm]}] (0.544,0.794) -- (0.557,0.839);
  \draw[gray!62, line width=0.56pt, -{Latex[length=1.05mm,width=0.75mm]}] (0.557,0.839) -- (0.631,0.846);
  \draw[blue!40, line width=0.56pt, -{Latex[length=1.05mm,width=0.75mm]}] (0.631,0.846) -- (0.733,0.766);
  \draw[cyan!50!black, line width=0.56pt, -{Latex[length=1.05mm,width=0.75mm]}] (0.733,0.766) -- (0.739,0.761);
  \draw[teal!60!black, line width=0.56pt, -{Latex[length=1.05mm,width=0.75mm]}] (0.739,0.761) -- (0.730,0.769);
  \draw[green!50!black, line width=0.56pt] (0.730,0.769) -- (0.730,0.769);
  \draw[orange!70!black, line width=0.56pt, -{Latex[length=1.05mm,width=0.75mm]}] (0.730,0.769) -- (0.733,0.766);
  \draw[red!75!black, line width=0.56pt] (0.733,0.766) -- (0.733,0.766);
  \draw[gray!70, line width=0.56pt, -{Latex[length=1.05mm,width=0.75mm]}] (0.634,0.804) -- (0.728,0.749);
  \draw[gray!62, line width=0.56pt, -{Latex[length=1.05mm,width=0.75mm]}] (0.728,0.749) -- (0.852,0.622);
  \draw[blue!40, line width=0.56pt, -{Latex[length=1.05mm,width=0.75mm]}] (0.852,0.622) -- (0.912,0.506);
  \draw[cyan!50!black, line width=0.56pt] (0.912,0.506) -- (0.912,0.506);
  \draw[teal!60!black, line width=0.56pt, -{Latex[length=1.05mm,width=0.75mm]}] (0.912,0.506) -- (0.889,0.556);
  \draw[green!50!black, line width=0.56pt] (0.889,0.556) -- (0.889,0.556);
  \draw[orange!70!black, line width=0.56pt, -{Latex[length=1.05mm,width=0.75mm]}] (0.889,0.556) -- (0.890,0.553);
  \draw[red!75!black, line width=0.56pt] (0.890,0.553) -- (0.890,0.553);
  \draw[gray!70, line width=0.56pt, -{Latex[length=1.05mm,width=0.75mm]}] (0.734,0.694) -- (0.893,0.503);
  \draw[gray!62, line width=0.56pt, -{Latex[length=1.05mm,width=0.75mm]}] (0.893,0.503) -- (0.982,0.237);
  \draw[blue!40, line width=0.56pt, -{Latex[length=1.05mm,width=0.75mm]}] (0.982,0.237) -- (1.000,0.028);
  \draw[cyan!50!black, line width=0.56pt] (1.000,0.028) -- (1.000,0.028);
  \draw[teal!60!black, line width=0.56pt, -{Latex[length=1.05mm,width=0.75mm]}] (1.000,0.028) -- (1.000,0.022);
  \draw[green!50!black, line width=0.56pt, -{Latex[length=1.05mm,width=0.75mm]}] (1.000,0.022) -- (1.000,0.015);
  \draw[orange!70!black, line width=0.56pt, -{Latex[length=1.05mm,width=0.75mm]}] (1.000,0.015) -- (1.000,0.021);
  \draw[red!75!black, line width=0.56pt] (1.000,0.021) -- (1.000,0.021);
  \draw[gray!70, line width=0.56pt, -{Latex[length=1.05mm,width=0.75mm]}] (0.561,0.661) -- (0.610,0.647);
  \draw[gray!62, line width=0.56pt, -{Latex[length=1.05mm,width=0.75mm]}] (0.610,0.647) -- (0.533,0.831);
  \draw[blue!40, line width=0.56pt, -{Latex[length=1.05mm,width=0.75mm]}] (0.533,0.831) -- (0.613,0.855);
  \draw[cyan!50!black, line width=0.56pt, -{Latex[length=1.05mm,width=0.75mm]}] (0.613,0.855) -- (0.619,0.854);
  \draw[teal!60!black, line width=0.56pt, -{Latex[length=1.05mm,width=0.75mm]}] (0.619,0.854) -- (0.622,0.851);
  \draw[green!50!black, line width=0.56pt] (0.622,0.851) -- (0.622,0.851);
  \draw[orange!70!black, line width=0.56pt, -{Latex[length=1.05mm,width=0.75mm]}] (0.622,0.851) -- (0.619,0.854);
  \draw[red!75!black, line width=0.56pt] (0.619,0.854) -- (0.619,0.854);
  \filldraw[fill=darkgreen, draw=white, line width=0.12pt] (0.75100,0.59500) circle (0.0105);
  \filldraw[fill=darkblue, draw=white, line width=0.12pt] (0.29700,0.96800) circle (0.0105);
  \filldraw[fill=darkgreen, draw=white, line width=0.12pt] (0.35700,0.91300) circle (0.0105);
  \filldraw[fill=darkblue, draw=white, line width=0.12pt] (0.82800,0.65700) circle (0.0105);
  \filldraw[fill=darkblue, draw=white, line width=0.12pt] (0.49400,0.91700) circle (0.0105);
  \filldraw[fill=darkblue, draw=white, line width=0.12pt] (0.63500,0.84300) circle (0.0105);
  \filldraw[fill=darkgreen, draw=white, line width=0.12pt] (0.55700,0.83900) circle (0.0105);
  \filldraw[fill=darkblue, draw=white, line width=0.12pt] (0.72800,0.74900) circle (0.0105);
  \filldraw[fill=darkblue, draw=white, line width=0.12pt] (0.89300,0.50300) circle (0.0105);
  \filldraw[fill=darkgreen, draw=white, line width=0.12pt] (0.61000,0.64700) circle (0.0105);
  \filldraw[fill=darkgreen, draw=white, line width=0.12pt] (0.90200,0.39800) circle (0.0105);
  \filldraw[fill=darkblue, draw=white, line width=0.12pt] (0.02200,1.00000) circle (0.0105);
  \filldraw[fill=darkblue, draw=white, line width=0.12pt] (0.29600,0.97400) circle (0.0105);
  \filldraw[fill=darkblue, draw=white, line width=0.12pt] (0.92900,0.46300) circle (0.0105);
  \filldraw[fill=darkblue, draw=white, line width=0.12pt] (0.48600,0.92000) circle (0.0105);
  \filldraw[fill=darkblue, draw=white, line width=0.12pt] (0.75300,0.74700) circle (0.0105);
  \filldraw[fill=darkblue, draw=white, line width=0.12pt] (0.63100,0.84600) circle (0.0105);
  \filldraw[fill=darkblue, draw=white, line width=0.12pt] (0.85200,0.62200) circle (0.0105);
  \filldraw[fill=darkblue, draw=white, line width=0.12pt] (0.98200,0.23700) circle (0.0105);
  \filldraw[fill=darkgreen, draw=white, line width=0.12pt] (0.53300,0.83100) circle (0.0105);
  \filldraw[fill=darkgreen, draw=white, line width=0.12pt] (0.95100,0.26900) circle (0.0105);
  \filldraw[fill=darkblue, draw=white, line width=0.12pt] (0.02800,1.00000) circle (0.0105);
  \filldraw[fill=darkblue, draw=white, line width=0.12pt] (0.28600,0.97600) circle (0.0105);
  \filldraw[fill=darkblue, draw=white, line width=0.12pt] (0.96800,0.32700) circle (0.0105);
  \filldraw[fill=darkblue, draw=white, line width=0.12pt] (0.47300,0.92500) circle (0.0105);
  \filldraw[fill=darkblue, draw=white, line width=0.12pt] (0.83300,0.65100) circle (0.0105);
  \filldraw[fill=darkblue, draw=white, line width=0.12pt] (0.73300,0.76600) circle (0.0105);
  \filldraw[fill=darkblue, draw=white, line width=0.12pt] (0.91200,0.50600) circle (0.0105);
  \filldraw[fill=darkblue, draw=white, line width=0.12pt] (1.00000,0.02800) circle (0.0105);
  \filldraw[fill=darkblue, draw=white, line width=0.12pt] (0.61300,0.85500) circle (0.0105);
  \filldraw[fill=darkgreen, draw=white, line width=0.12pt] (0.96600,0.26400) circle (0.0105);
  \filldraw[fill=darkblue, draw=white, line width=0.12pt] (0.02800,1.00000) circle (0.0105);
  \filldraw[fill=darkblue, draw=white, line width=0.12pt] (0.29500,0.97400) circle (0.0105);
  \filldraw[fill=darkblue, draw=white, line width=0.12pt] (0.96800,0.32700) circle (0.0105);
  \filldraw[fill=darkblue, draw=white, line width=0.12pt] (0.48100,0.92200) circle (0.0105);
  \filldraw[fill=darkblue, draw=white, line width=0.12pt] (0.83300,0.65100) circle (0.0105);
  \filldraw[fill=darkblue, draw=white, line width=0.12pt] (0.73900,0.76100) circle (0.0105);
  \filldraw[fill=darkblue, draw=white, line width=0.12pt] (0.91200,0.50600) circle (0.0105);
  \filldraw[fill=darkblue, draw=white, line width=0.12pt] (1.00000,0.02800) circle (0.0105);
  \filldraw[fill=darkblue, draw=white, line width=0.12pt] (0.61900,0.85400) circle (0.0105);
  \filldraw[fill=darkblue, draw=white, line width=0.12pt] (0.98000,0.26000) circle (0.0105);
  \filldraw[fill=darkblue, draw=white, line width=0.12pt] (0.02200,1.00000) circle (0.0105);
  \filldraw[fill=darkblue, draw=white, line width=0.12pt] (0.30000,0.97300) circle (0.0105);
  \filldraw[fill=darkblue, draw=white, line width=0.12pt] (0.94300,0.42100) circle (0.0105);
  \filldraw[fill=darkblue, draw=white, line width=0.12pt] (0.47700,0.92300) circle (0.0105);
  \filldraw[fill=darkblue, draw=white, line width=0.12pt] (0.82100,0.66700) circle (0.0105);
  \filldraw[fill=darkblue, draw=white, line width=0.12pt] (0.73000,0.76900) circle (0.0105);
  \filldraw[fill=darkblue, draw=white, line width=0.12pt] (0.88900,0.55600) circle (0.0105);
  \filldraw[fill=darkblue, draw=white, line width=0.12pt] (1.00000,0.02200) circle (0.0105);
  \filldraw[fill=darkblue, draw=white, line width=0.12pt] (0.62200,0.85100) circle (0.0105);
  \filldraw[fill=darkblue, draw=white, line width=0.12pt] (0.98000,0.26000) circle (0.0105);
  \filldraw[fill=darkblue, draw=white, line width=0.12pt] (0.02200,1.00000) circle (0.0105);
  \filldraw[fill=darkblue, draw=white, line width=0.12pt] (0.30000,0.97300) circle (0.0105);
  \filldraw[fill=darkblue, draw=white, line width=0.12pt] (0.94300,0.42100) circle (0.0105);
  \filldraw[fill=darkblue, draw=white, line width=0.12pt] (0.47700,0.92300) circle (0.0105);
  \filldraw[fill=darkblue, draw=white, line width=0.12pt] (0.82100,0.66700) circle (0.0105);
  \filldraw[fill=darkblue, draw=white, line width=0.12pt] (0.73000,0.76900) circle (0.0105);
  \filldraw[fill=darkblue, draw=white, line width=0.12pt] (0.88900,0.55600) circle (0.0105);
  \filldraw[fill=darkblue, draw=white, line width=0.12pt] (1.00000,0.01500) circle (0.0105);
  \filldraw[fill=darkblue, draw=white, line width=0.12pt] (0.62200,0.85100) circle (0.0105);
  \filldraw[fill=darkblue, draw=white, line width=0.12pt] (0.98100,0.25500) circle (0.0105);
  \filldraw[fill=darkblue, draw=white, line width=0.12pt] (0.02800,1.00000) circle (0.0105);
  \filldraw[fill=darkblue, draw=white, line width=0.12pt] (0.29500,0.97400) circle (0.0105);
  \filldraw[fill=darkblue, draw=white, line width=0.12pt] (0.94200,0.42500) circle (0.0105);
  \filldraw[fill=darkblue, draw=white, line width=0.12pt] (0.48100,0.92200) circle (0.0105);
  \filldraw[fill=darkblue, draw=white, line width=0.12pt] (0.81900,0.67000) circle (0.0105);
  \filldraw[fill=darkblue, draw=white, line width=0.12pt] (0.73300,0.76600) circle (0.0105);
  \filldraw[fill=darkblue, draw=white, line width=0.12pt] (0.89000,0.55300) circle (0.0105);
  \filldraw[fill=darkblue, draw=white, line width=0.12pt] (1.00000,0.02100) circle (0.0105);
  \filldraw[fill=darkblue, draw=white, line width=0.12pt] (0.61900,0.85400) circle (0.0105);
  \filldraw[fill=darkorange, draw=white, line width=0.12pt] (0.561,0.721) circle (0.014);
  \filldraw[fill=darkblue, draw=white, line width=0.12pt] (0.534,0.864) circle (0.014);
  \filldraw[fill=darkgreen, draw=white, line width=0.12pt] (0.537,0.807) circle (0.014);
  \filldraw[fill=darkblue, draw=white, line width=0.12pt] (0.687,0.777) circle (0.014);
  \filldraw[fill=darkblue, draw=white, line width=0.12pt] (0.595,0.855) circle (0.014);
  \filldraw[fill=darkblue, draw=white, line width=0.12pt] (0.675,0.815) circle (0.014);
  \filldraw[fill=darkorange, draw=white, line width=0.12pt] (0.544,0.794) circle (0.014);
  \filldraw[fill=darkgreen, draw=white, line width=0.12pt] (0.634,0.804) circle (0.014);
  \filldraw[fill=darkblue, draw=white, line width=0.12pt] (0.734,0.694) circle (0.014);
  \filldraw[fill=darkorange, draw=white, line width=0.12pt] (0.561,0.661) circle (0.014);
  \fill[red!80!black] (0.981,0.255) circle (0.018);
  \fill[red!80!black] (0.028,1.000) circle (0.018);
  \fill[red!80!black] (0.295,0.974) circle (0.018);
  \fill[red!80!black] (0.942,0.425) circle (0.018);
  \fill[red!80!black] (0.481,0.922) circle (0.018);
  \fill[red!80!black] (0.819,0.670) circle (0.018);
  \fill[red!80!black] (0.733,0.766) circle (0.018);
  \fill[red!80!black] (0.890,0.553) circle (0.018);
  \fill[red!80!black] (1.000,0.021) circle (0.018);
  \fill[red!80!black] (0.619,0.854) circle (0.018);
  \node[below] at (0.5,-0.09) {$F_1$};
  \node[rotate=90] at (-0.12,0.5) {$F_2$};
\end{tikzpicture}
\end{minipage}
\caption{Projected finite-difference diffusion for the perturbed $10$-point summed quadratic example using the active repulsion term ($\tau=2\cdot 10^{-4}$, $\sigma=0.03$), a smaller step size $\alpha=0.004$, and $540$ projected iterations in $[0,1]^2$. Left: decision-space paths approaching the efficient diagonal. Right: the corresponding objective-space paths approaching the curved Pareto front. Rank-colored markers denote sampled nonfinal iterates, including the initial set, and red points denote the last plotted iterates. The trajectory colors encode time along the sampled path, progressing from gray through blue/teal and orange to red; arrow tips indicate increasing iteration. The marker colors encode the current objective-space dominance rank using the same layer palette as Figure~\ref{fig:intro-layered-example}: blue for layer~1, green for layer~2, and orange for layer~3 or deeper.}
\label{fig:quadratic-ten-point-both}
\end{figure}

The perturbation makes the nonlinear geometry visible in both spaces. In decision space, the paths bend toward the diagonal efficient set $\{(t,t):t\in[0,1]\}$. In objective space, the images of these paths bend toward the curved front $\mathcal F$. The final approximation set is numerically close to
\[
(0.013,1.000),\ (0.281,0.977),\ (0.443,0.936),\ (0.589,0.872),\ (0.681,0.811),
\]
\[
(0.770,0.729),\ (0.839,0.641),\ (0.920,0.485),\ (0.969,0.320),\ (1.000,0.017).
\]
Thus the longer projected diffusion drives the approximation set close to the Pareto front while the repulsion term keeps the points separated along the curve. In particular, this ten-point example shows that the normalized projected finite-difference scheme is already effective on a genuinely nonlinear front: the trajectories bend in both decision and objective space, yet the final point set remains well distributed rather than collapsing into a few clusters.

\section{A three-objective supersphere benchmark}
\label{sec:three-objective-supersphere}

We now record three-objective experiments for both the layered hypervolume and layered magnitude indicators.  The purpose of this section is not to replace the biobjective analysis above, but to show that the same layered construction extends naturally from the anchored staircase in the plane to the anchored orthogonal union in three dimensions.  For a finite first-front set $B\subseteq [0,\infty)^3$, the three-dimensional magnitude indicator used here is
\[
\Mag_3(\Dom(B))
=1+\frac{L_1(B)+L_2(B)+L_3(B)}{2}
 +\frac{A_{12}(B)+A_{13}(B)+A_{23}(B)}{4}
 +\frac{\HV_3(B)}{8},
\]
where $L_i(B)=\max_{b\in B} b_i$, $A_{ij}(B)$ is the two-dimensional dominated area of the projection onto the $(i,j)$ coordinate plane, and $\HV_3(B)$ is the ordinary anchored three-dimensional hypervolume.  Thus hypervolume appears as the volume term of the magnitude expansion, while magnitude also keeps the edge and face contributions.  The layered three-dimensional surrogate is obtained exactly as before by evaluating each nondomination layer separately and multiplying layer $\ell$ by $\varepsilon^{\ell-1}$.

The benchmark is the simplex-constrained supersphere problem described in Appendix~\ref{app:supersphere-benchmark}.  We first show the three Pareto-front surfaces in Figure~\ref{fig:supersphere-gamma-surfaces}, because the curvature parameter determines how strongly the interior of the front bulges away from the coordinate planes.  We then compare magnitude and hypervolume on these same three cases in Table~\ref{tab:three-d-hv-vs-mag}, followed by point-set visualizations for increasing Das--Dennis budgets.  In each run for Table~\ref{tab:three-d-hv-vs-mag} we used Das--Dennis initialization with $H=3$, a small Gaussian perturbation $\sigma_{\rm DD}=0.01$, and therefore $\mu=(H+1)(H+2)/2=10$ points.  Magnitude and hypervolume runs start from the same seed for each value of $\gamma\in\{0.25,0.5,1.0\}$.  The reference archive for the IGD value was generated by sampling the decision simplex.  The implementation switches between exact inclusion--exclusion for small fronts and sweep-based forward derivatives once the first front exceeds the prescribed threshold.  In Table~\ref{tab:three-d-hv-vs-mag}, the final sets are cross-evaluated by both $\Mag_3$ and $\HV_3$.

\newcommand{\superspheresurface}[2]{%
\begin{tikzpicture}
\begin{axis}[
  width=0.31\linewidth,
  view={135}{27},
  xlabel={$F_1$}, ylabel={$F_2$}, zlabel={$F_3$},
  xmin=0,xmax=1,ymin=0,ymax=1,zmin=0,zmax=1,
  xtick={0,1},ytick={0,1},ztick={0,1},
  title={#2},
  title style={font=\small},
  tick label style={font=\scriptsize},
  label style={font=\scriptsize},
]
\addplot3[
  surf,
  shader=flat,
  samples=15,
  samples y=15,
  domain=0:1,
  y domain=0:1,
  draw=blue!35,
  fill opacity=0.45,
] ({1-((((x-1)^2+((1-x)*y)^2+((1-x)*(1-y))^2)/2)^(#1))},
   {1-(((x^2+(((1-x)*y)-1)^2+((1-x)*(1-y))^2)/2)^(#1))},
   {1-(((x^2+((1-x)*y)^2+(((1-x)*(1-y))-1)^2)/2)^(#1))});
\end{axis}
\end{tikzpicture}%
}

\begin{figure}[htbp]
\centering
\superspheresurface{0.25}{$\gamma=0.25$}\hfill
\superspheresurface{0.5}{$\gamma=0.5$}\hfill
\superspheresurface{1.0}{$\gamma=1$}
\caption{Analytic supersphere Pareto-front surfaces for the three bulge parameters used in the experiments.  The vertices remain fixed at the three unit vectors in objective space; changing $\gamma$ changes the position of the interior of the surface.}
\label{fig:supersphere-gamma-surfaces}
\end{figure}

\begin{table}[htbp]
\centering
\small
\setlength{\tabcolsep}{4pt}
\begin{tabular}{clrrrrrrr}
\hline
$\gamma$ & Optimized & iters & acc. & $\Mag_3^0$ & $\Mag_3^f$ & $\HV_3^0$ & $\HV_3^f$ & IGD$^f$ \\
\hline
0.25 & Magnitude & 31 & 1 & 2.5235 & 2.59869 & 0.0243 & 0.02505 & 0.10389 \\
0.25 & Hypervolume & 80 & 78 & 2.5235 & 1.88994 & 0.0243 & 0.03311 & 0.0749 \\
0.5 & Magnitude & 31 & 1 & 2.76845 & 2.77634 & 0.12395 & 0.12768 & 0.13779 \\
0.5 & Hypervolume & 80 & 78 & 2.76845 & 2.45964 & 0.12395 & 0.16074 & 0.10531 \\
1.0 & Magnitude & 80 & 75 & 3.07394 & 3.08901 & 0.43042 & 0.46661 & 0.14698 \\
1.0 & Hypervolume & 80 & 76 & 3.07394 & 3.0022 & 0.43042 & 0.51512 & 0.14814 \\
\hline
\end{tabular}
\caption{Three-objective comparison on the supersphere benchmark for the three bulge parameters shown in Figure~\ref{fig:supersphere-gamma-surfaces}.  Each pair of rows uses a common perturbed Das--Dennis initialization with $H=3$, $\mu=10$, seed $8$, $80$ maximum iterations, and exact-front threshold $6$.  The columns $\Mag_3^0,\HV_3^0$ report the common initial values for the corresponding $\gamma$, while $\Mag_3^f,\HV_3^f$ and IGD$^f$ report final cross-evaluations of the approximation set produced by the optimized indicator.}
\label{tab:three-d-hv-vs-mag}
\end{table}

Across the three curvatures, the hypervolume run consistently obtains the larger final volume term, while the magnitude-indicator run gives the larger final value of $\Mag_3$.  For $\gamma=0.25$ and $\gamma=0.5$, the magnitude-indicator ascent quickly stabilizes after preserving the extreme vertices; for $\gamma=1$ it also accepts many interior moves.  This behavior is consistent with the formula above: in three dimensions, $\HV_3$ measures only the volume contribution, while $\Mag_3$ also rewards the coordinate-axis extents and the projected dominated areas.

Figure~\ref{fig:supersphere-3d-comparison} combines the point-set comparisons for $\gamma=0.25$, $0.5$, and $1.0$ in a single float.  Rows correspond to the three front curvatures, and columns to the Das--Dennis budgets $H=2,3,4$.  Red circles denote the final approximation set obtained by optimizing $\Mag_3$, while black triangles denote the final approximation set obtained by optimizing $\HV_3$.  As the front becomes less strongly bulged, both optimized sets move further away from the coordinate axes and deeper into the interior of the surface; the characteristic difference remains visible, with magnitude retaining more extreme extent and hypervolum\begin{figure}[p]
\centering
{\small\bfseries $\gamma=0.25$}\par\vspace{0.15em}
\begin{minipage}{0.32\linewidth}
\centering
\begin{tikzpicture}
\begin{axis}[
  width=\linewidth,
  view={135}{27},
  xlabel={$F_1$}, ylabel={$F_2$}, zlabel={$F_3$},
  xmin=0,xmax=1,ymin=0,ymax=1,zmin=0,zmax=1,
  xtick={0,1}, ytick={0,1}, ztick={0,1},
  title={$H=2,\ \mu=6$},
  title style={font=\small},
  tick label style={font=\scriptsize},
  label style={font=\scriptsize},
]
\addplot3[surf,shader=flat,opacity=0.22,samples=13,samples y=13,domain=0:1,y domain=0:1,draw=blue!35,fill opacity=0.22]
({1-((((x-1)^2+((1-x)*y)^2+((1-x)*(1-y))^2)/2)^0.25)},
 {1-(((x^2+(((1-x)*y)-1)^2+((1-x)*(1-y))^2)/2)^0.25)},
 {1-(((x^2+((1-x)*y)^2+(((1-x)*(1-y))-1)^2)/2)^0.25)});
\addplot3[only marks,mark=*,mark size=1.4pt,red!80!black] coordinates {
(0.0000,0.0000,1.0000) (0.0747,0.2614,0.3257) (0.0000,1.0000,0.0000)
(0.3232,0.0725,0.2637) (0.2665,0.3202,0.0725) (1.0000,0.0000,0.0000)};
\addplot3[only marks,mark=triangle*,mark size=1.6pt,black] coordinates {
(0.1282,0.1282,0.4581) (0.1962,0.2093,0.3116) (0.1285,0.4576,0.1285)
(0.3144,0.1957,0.2069) (0.2078,0.3123,0.1971) (0.4526,0.1308,0.1308)};
\end{axis}
\end{tikzpicture}
\end{minipage}\hfill
\begin{minipage}{0.32\linewidth}
\centering
\begin{tikzpicture}
\begin{axis}[
  width=\linewidth,
  view={135}{27},
  xlabel={$F_1$}, ylabel={$F_2$}, zlabel={$F_3$},
  xmin=0,xmax=1,ymin=0,ymax=1,zmin=0,zmax=1,
  xtick={0,1}, ytick={0,1}, ztick={0,1},
  title={$H=3,\ \mu=10$},
  title style={font=\small},
  tick label style={font=\scriptsize},
  label style={font=\scriptsize},
]
\addplot3[surf,shader=flat,opacity=0.22,samples=13,samples y=13,domain=0:1,y domain=0:1,draw=blue!35,fill opacity=0.22]
({1-((((x-1)^2+((1-x)*y)^2+((1-x)*(1-y))^2)/2)^0.25)},
 {1-(((x^2+(((1-x)*y)-1)^2+((1-x)*(1-y))^2)/2)^0.25)},
 {1-(((x^2+((1-x)*y)^2+(((1-x)*(1-y))-1)^2)/2)^0.25)});
\addplot3[only marks,mark=*,mark size=1.4pt,red!80!black] coordinates {
(0.0000,0.0000,1.0000) (0.0602,0.1559,0.4637) (0.0724,0.3955,0.2031)
(0.0000,1.0000,0.0000) (0.1770,0.0805,0.4289) (0.2378,0.2571,0.2253)
(0.1605,0.4565,0.0600) (0.3990,0.0758,0.2000) (0.4546,0.1618,0.0582)
(1.0000,0.0000,0.0000)};
\addplot3[only marks,mark=triangle*,mark size=1.6pt,black] coordinates {
(0.0995,0.0995,0.5229) (0.1426,0.1833,0.3845) (0.1580,0.2862,0.2660)
(0.1006,0.5202,0.1006) (0.2238,0.1579,0.3295) (0.2734,0.2706,0.1692)
(0.1734,0.3752,0.1650) (0.3417,0.1376,0.2258) (0.3867,0.1826,0.1409)
(0.5170,0.1020,0.1020)};
\end{axis}
\end{tikzpicture}
\end{minipage}\hfill
\begin{minipage}{0.32\linewidth}
\centering
\begin{tikzpicture}
\begin{axis}[
  width=\linewidth,
  view={135}{27},
  xlabel={$F_1$}, ylabel={$F_2$}, zlabel={$F_3$},
  xmin=0,xmax=1,ymin=0,ymax=1,zmin=0,zmax=1,
  xtick={0,1}, ytick={0,1}, ztick={0,1},
  title={$H=4,\ \mu=15$},
  title style={font=\small},
  tick label style={font=\scriptsize},
  label style={font=\scriptsize},
]
\addplot3[surf,shader=flat,opacity=0.22,samples=13,samples y=13,domain=0:1,y domain=0:1,draw=blue!35,fill opacity=0.22]
({1-((((x-1)^2+((1-x)*y)^2+((1-x)*(1-y))^2)/2)^0.25)},
 {1-(((x^2+(((1-x)*y)-1)^2+((1-x)*(1-y))^2)/2)^0.25)},
 {1-(((x^2+((1-x)*y)^2+(((1-x)*(1-y))-1)^2)/2)^0.25)});
\addplot3[only marks,mark=*,mark size=1.25pt,red!80!black] coordinates {
(0.0000,0.0000,1.0000) (0.0502,0.1221,0.5211) (0.0726,0.2870,0.2988)
(0.0515,0.5143,0.1258) (0.0044,0.8997,0.0044) (0.1372,0.0528,0.4945)
(0.1791,0.1818,0.3538) (0.1872,0.3410,0.1871) (0.1400,0.4896,0.0532)
(0.2887,0.0748,0.2969) (0.3509,0.1767,0.1872) (0.2942,0.2914,0.0761)
(0.5307,0.0475,0.1170) (0.4966,0.1359,0.0535) (1.0000,0.0000,0.0000)};
\addplot3[only marks,mark=triangle*,mark size=1.45pt,black] coordinates {
(0.0626,0.0604,0.6246) (0.1214,0.1408,0.4508) (0.1470,0.2797,0.2797)
(0.0880,0.5227,0.1091) (0.0000,1.0000,0.0000) (0.1353,0.1046,0.4725)
(0.1879,0.1811,0.3460) (0.1877,0.3337,0.1944) (0.1403,0.4651,0.1056)
(0.2733,0.1488,0.2850) (0.3419,0.1894,0.1841) (0.2857,0.2714,0.1508)
(0.4472,0.1191,0.1456) (0.4708,0.1371,0.1038) (0.6164,0.0649,0.0636)};
\end{axis}
\end{tikzpicture}
\end{minipage}
\vspace{0.45em}
{\small\bfseries $\gamma=0.5$}\par\vspace{0.15em}
\begin{minipage}{0.32\linewidth}
\centering
\begin{tikzpicture}
\begin{axis}[
  width=\linewidth,
  view={135}{27},
  xlabel={$F_1$}, ylabel={$F_2$}, zlabel={$F_3$},
  xmin=0,xmax=1,ymin=0,ymax=1,zmin=0,zmax=1,
  xtick={0,1}, ytick={0,1}, ztick={0,1},
  title={$H=2,\ \mu=6$},
  title style={font=\small},
  tick label style={font=\scriptsize},
  label style={font=\scriptsize},
]
\addplot3[surf,shader=flat,opacity=0.22,samples=13,samples y=13,domain=0:1,y domain=0:1,draw=blue!35,fill opacity=0.22]
({1-((((x-1)^2+((1-x)*y)^2+((1-x)*(1-y))^2)/2)^0.5)},
 {1-(((x^2+(((1-x)*y)-1)^2+((1-x)*(1-y))^2)/2)^0.5)},
 {1-(((x^2+((1-x)*y)^2+(((1-x)*(1-y))-1)^2)/2)^0.5)});
\addplot3[only marks,mark=*,mark size=1.4pt,red!80!black] coordinates {
(0.0000,0.0000,1.0000) (0.1478,0.4552,0.5443) (0.0000,1.0000,0.0000)
(0.5410,0.1434,0.4587) (0.4628,0.5370,0.1433) (1.0000,0.0000,0.0000)};
\addplot3[only marks,mark=triangle*,mark size=1.6pt,black] coordinates {
(0.2518,0.2506,0.6914) (0.3544,0.3741,0.5264) (0.2479,0.6959,0.2479)
(0.5324,0.3529,0.3682) (0.3697,0.5347,0.3484) (0.6923,0.2505,0.2505)};
\end{axis}
\end{tikzpicture}
\end{minipage}\hfill
\begin{minipage}{0.32\linewidth}
\centering
\begin{tikzpicture}
\begin{axis}[
  width=\linewidth,
  view={135}{27},
  xlabel={$F_1$}, ylabel={$F_2$}, zlabel={$F_3$},
  xmin=0,xmax=1,ymin=0,ymax=1,zmin=0,zmax=1,
  xtick={0,1}, ytick={0,1}, ztick={0,1},
  title={$H=3,\ \mu=10$},
  title style={font=\small},
  tick label style={font=\scriptsize},
  label style={font=\scriptsize},
]
\addplot3[surf,shader=flat,opacity=0.22,samples=13,samples y=13,domain=0:1,y domain=0:1,draw=blue!35,fill opacity=0.22]
({1-((((x-1)^2+((1-x)*y)^2+((1-x)*(1-y))^2)/2)^0.5)},
 {1-(((x^2+(((1-x)*y)-1)^2+((1-x)*(1-y))^2)/2)^0.5)},
 {1-(((x^2+((1-x)*y)^2+(((1-x)*(1-y))-1)^2)/2)^0.5)});
\addplot3[only marks,mark=*,mark size=1.4pt,red!80!black] coordinates {
(0.0000,0.0000,1.0000) (0.1193,0.2878,0.7119) (0.1405,0.6351,0.3643)
(0.0000,1.0000,0.0000) (0.3642,0.1570,0.6334) (0.4203,0.4475,0.3991)
(0.2954,0.7044,0.1188) (0.6371,0.1438,0.3619) (0.7023,0.2977,0.1155) (1.0000,0.0000,0.0000)};
\addplot3[only marks,mark=triangle*,mark size=1.6pt,black] coordinates {
(0.1874,0.1874,0.7746) (0.2526,0.3400,0.6222) (0.2839,0.5022,0.4534)
(0.1894,0.7720,0.1894) (0.3869,0.2927,0.5588) (0.4707,0.4670,0.3131)
(0.3190,0.6159,0.2912) (0.5560,0.2562,0.4121) (0.6172,0.3346,0.2695) (0.7689,0.1919,0.1919)};
\end{axis}
\end{tikzpicture}
\end{minipage}\hfill
\begin{minipage}{0.32\linewidth}
\centering
\begin{tikzpicture}
\begin{axis}[
  width=\linewidth,
  view={135}{27},
  xlabel={$F_1$}, ylabel={$F_2$}, zlabel={$F_3$},
  xmin=0,xmax=1,ymin=0,ymax=1,zmin=0,zmax=1,
  xtick={0,1}, ytick={0,1}, ztick={0,1},
  title={$H=4,\ \mu=15$},
  title style={font=\small},
  tick label style={font=\scriptsize},
  label style={font=\scriptsize},
]
\addplot3[surf,shader=flat,opacity=0.22,samples=13,samples y=13,domain=0:1,y domain=0:1,draw=blue!35,fill opacity=0.22]
({1-((((x-1)^2+((1-x)*y)^2+((1-x)*(1-y))^2)/2)^0.5)},
 {1-(((x^2+(((1-x)*y)-1)^2+((1-x)*(1-y))^2)/2)^0.5)},
 {1-(((x^2+((1-x)*y)^2+(((1-x)*(1-y))-1)^2)/2)^0.5)});
\addplot3[only marks,mark=*,mark size=1.25pt,red!80!black] coordinates {
(0.0000,0.0000,1.0000) (0.0974,0.2255,0.7744) (0.1451,0.4914,0.5084)
(0.0999,0.7679,0.2320) (0.0120,0.9861,0.0120) (0.2591,0.1050,0.7408)
(0.3235,0.3281,0.5862) (0.3371,0.5696,0.3366) (0.2640,0.7359,0.1059)
(0.4942,0.1478,0.5054) (0.5824,0.3196,0.3368) (0.5039,0.4954,0.1529)
(0.7827,0.0945,0.2172) (0.7339,0.2657,0.1158) (1.0000,0.0000,0.0000)};
\addplot3[only marks,mark=triangle*,mark size=1.45pt,black] coordinates {
(0.1496,0.1497,0.8218) (0.2225,0.2225,0.7294) (0.2629,0.4869,0.4804)
(0.1795,0.7848,0.1791) (0.0000,1.0000,0.0000) (0.2948,0.2948,0.6312)
(0.3438,0.3592,0.5468) (0.3396,0.5678,0.3365) (0.2716,0.6636,0.2716)
(0.4860,0.2617,0.4819) (0.5247,0.3691,0.3617) (0.4841,0.4837,0.2619)
(0.6453,0.2318,0.3254) (0.6637,0.3094,0.2217) (0.7926,0.1731,0.1731)};
\end{axis}
\end{tikzpicture}
\end{minipage}
\vspace{0.45em}
{\small\bfseries $\gamma=1.0$}\par\vspace{0.15em}
\begin{minipage}{0.32\linewidth}
\centering
\begin{tikzpicture}
\begin{axis}[
  width=\linewidth,
  view={135}{27},
  xlabel={$F_1$}, ylabel={$F_2$}, zlabel={$F_3$},
  xmin=0,xmax=1,ymin=0,ymax=1,zmin=0,zmax=1,
  xtick={0,1}, ytick={0,1}, ztick={0,1},
  title={$H=2,\ \mu=6$},
  title style={font=\small},
  tick label style={font=\scriptsize},
  label style={font=\scriptsize},
]
\addplot3[surf,shader=flat,opacity=0.22,samples=13,samples y=13,domain=0:1,y domain=0:1,draw=blue!35,fill opacity=0.22]
({1-((((x-1)^2+((1-x)*y)^2+((1-x)*(1-y))^2)/2)^1.0)},
 {1-(((x^2+(((1-x)*y)-1)^2+((1-x)*(1-y))^2)/2)^1.0)},
 {1-(((x^2+((1-x)*y)^2+(((1-x)*(1-y))-1)^2)/2)^1.0)});
\addplot3[only marks,mark=*,mark size=1.4pt,red!80!black] coordinates {
(0.1975,0.1975,0.9849) (0.5399,0.5399,0.8338) (0.1978,0.9849,0.1978)
(0.8334,0.5405,0.5402) (0.5396,0.8341,0.5396) (0.9856,0.1936,0.1937)};
\addplot3[only marks,mark=triangle*,mark size=1.6pt,black] coordinates {
(0.4425,0.4425,0.9030) (0.6004,0.6004,0.7704) (0.4510,0.8981,0.4510)
(0.7701,0.6006,0.6006) (0.6070,0.7619,0.6072) (0.9008,0.4464,0.4464)};
\end{axis}
\end{tikzpicture}
\end{minipage}\hfill
\begin{minipage}{0.32\linewidth}
\centering
\begin{tikzpicture}
\begin{axis}[
  width=\linewidth,
  view={135}{27},
  xlabel={$F_1$}, ylabel={$F_2$}, zlabel={$F_3$},
  xmin=0,xmax=1,ymin=0,ymax=1,zmin=0,zmax=1,
  xtick={0,1}, ytick={0,1}, ztick={0,1},
  title={$H=3,\ \mu=10$},
  title style={font=\small},
  tick label style={font=\scriptsize},
  label style={font=\scriptsize},
]
\addplot3[surf,shader=flat,opacity=0.22,samples=13,samples y=13,domain=0:1,y domain=0:1,draw=blue!35,fill opacity=0.22]
({1-((((x-1)^2+((1-x)*y)^2+((1-x)*(1-y))^2)/2)^1.0)},
 {1-(((x^2+(((1-x)*y)-1)^2+((1-x)*(1-y))^2)/2)^1.0)},
 {1-(((x^2+((1-x)*y)^2+(((1-x)*(1-y))-1)^2)/2)^1.0)});
\addplot3[only marks,mark=*,mark size=1.4pt,red!80!black] coordinates {
(0.1248,0.1248,0.9943) (0.3274,0.5040,0.9054) (0.3019,0.8593,0.6053)
(0.0996,0.9965,0.0996) (0.6158,0.3030,0.8529) (0.6614,0.6759,0.6625)
(0.5013,0.9074,0.3194) (0.8459,0.3070,0.6268) (0.8993,0.5183,0.3301) (0.9961,0.1044,0.1044)};
\addplot3[only marks,mark=triangle*,mark size=1.6pt,black] coordinates {
(0.4009,0.4009,0.9243) (0.5206,0.5873,0.8187) (0.4548,0.7686,0.6957)
(0.3880,0.9302,0.3876) (0.6615,0.5152,0.7731) (0.7226,0.7051,0.5429)
(0.5503,0.8468,0.4960) (0.8098,0.4341,0.6495) (0.8498,0.5693,0.4591) (0.9364,0.3731,0.3731)};
\end{axis}
\end{tikzpicture}
\end{minipage}\hfill
\begin{minipage}{0.32\linewidth}
\centering
\begin{tikzpicture}
\begin{axis}[
  width=\linewidth,
  view={135}{27},
  xlabel={$F_1$}, ylabel={$F_2$}, zlabel={$F_3$},
  xmin=0,xmax=1,ymin=0,ymax=1,zmin=0,zmax=1,
  xtick={0,1}, ytick={0,1}, ztick={0,1},
  title={$H=4,\ \mu=15$},
  title style={font=\small},
  tick label style={font=\scriptsize},
  label style={font=\scriptsize},
]
\addplot3[surf,shader=flat,opacity=0.22,samples=13,samples y=13,domain=0:1,y domain=0:1,draw=blue!35,fill opacity=0.22]
({1-((((x-1)^2+((1-x)*y)^2+((1-x)*(1-y))^2)/2)^1.0)},
 {1-(((x^2+(((1-x)*y)-1)^2+((1-x)*(1-y))^2)/2)^1.0)},
 {1-(((x^2+((1-x)*y)^2+(((1-x)*(1-y))-1)^2)/2)^1.0)});
\addplot3[only marks,mark=*,mark size=1.25pt,red!80!black] coordinates {
(0.0588,0.0591,0.9988) (0.2399,0.3656,0.9560) (0.3122,0.7409,0.7563)
(0.2576,0.9482,0.3920) (0.0648,0.9985,0.0648) (0.4735,0.2389,0.9240)
(0.5251,0.5375,0.8412) (0.5481,0.8294,0.5415) (0.4890,0.9178,0.2453)
(0.7417,0.3117,0.7555) (0.8352,0.5157,0.5589) (0.7534,0.7437,0.3138)
(0.9544,0.2456,0.3704) (0.9176,0.4898,0.2453) (0.9986,0.0637,0.0637)};
\addplot3[only marks,mark=triangle*,mark size=1.45pt,black] coordinates {
(0.3609,0.3609,0.9413) (0.4139,0.5913,0.8503) (0.4462,0.7624,0.7060)
(0.3235,0.9330,0.4227) (0.0000,1.0000,0.0000) (0.5498,0.4096,0.8720)
(0.6014,0.6148,0.7605) (0.5687,0.8107,0.5598) (0.5030,0.8977,0.3795)
(0.7466,0.4774,0.7118) (0.8027,0.6163,0.5183) (0.7160,0.7546,0.4427)
(0.8469,0.3886,0.6068) (0.8842,0.5112,0.4268) (0.9530,0.3284,0.3284)};
\end{axis}
\end{tikzpicture}
\end{minipage}
\caption{Final approximation sets for the three supersphere fronts and the three Das--Dennis point budgets. Rows correspond to $\gamma=0.25$, $0.5$, and $1.0$; columns correspond to $H=2,3,4$ (i.e., $\mu=6,10,15$). Red circles show the final set obtained by optimizing $\Mag_3$, and black triangles show the final set obtained by optimizing $\HV_3$.}
\label{fig:supersphere-3d-comparison}
\end{figure}

\begin{table}[htbp]
\centering
\small
\setlength{\tabcolsep}{5pt}
\begin{tabular}{rrclrrr}
\hline
$H$ & $\mu$ & Optimized & iters & $\Mag_3^f$ & $\HV_3^f$ & IGD$^f$ \\
\hline
2 & 6 & Magnitude & 31 & 2.56904 & 0.01487 & 0.17032 \\
2 & 6 & Hypervolume & 80 & 1.77975 & 0.029 & 0.09878 \\
3 & 10 & Magnitude & 31 & 2.59869 & 0.02505 & 0.10389 \\
3 & 10 & Hypervolume & 80 & 1.88994 & 0.03311 & 0.0749 \\
4 & 15 & Magnitude & 44 & 2.56597 & 0.03067 & 0.07675 \\
4 & 15 & Hypervolume & 36 & 2.23831 & 0.03477 & 0.06501 \\
\hline
\end{tabular}
\caption{Point-set sensitivity for the $\gamma=0.25$ supersphere case shown in Figure~\ref{fig:supersphere-3d-comparison}.  The Das--Dennis parameter $H$ gives $\mu=(H+1)(H+2)/2$ initial points.  For each $H$, magnitude and hypervolume use the same perturbed initial point set and are cross-evaluated by both indicators.}
\label{tab:three-d-pointset-sensitivity}
\end{table}

\section{Convergence behaviour}
\label{sec:convergence-behaviour}

This section records seven representative convergence traces rather than an exhaustive performance study.  The aim is to show how layered ascent behaves when the initial approximation set is away from the final indicator configuration.  We use a curved two-objective quadratic front, the three-objective supersphere instance with $\gamma=1$ and a simplex-constrained Das--Dennis point set, a three-objective layered-start experiment in which the same $\gamma=1$ objective is optimized in the mildly enlarged decision box $[-0.4,1.4]^3$, and four longer 500-episode variants of the layered-start run with stochastic stagnation recovery.  The longer variants use two point budgets, $H=4$ with $\mu=15$ and $H=5$ with $\mu=(H+1)(H+2)/2=21$, and are run once with the layered magnitude indicator as the optimized indicator and once with the layered hypervolume indicator.  The layered-start experiments are included specifically to make the layer dynamics visible without using an extreme starting cloud: the initial points are sampled from $[-0.25,1.25]^3$, so several points begin outside the natural nondominated range, but the set does not collapse to one layer immediately.  In all convergence plots and layer tables, points are reported every twentieth iteration or episode, with the final endpoint included explicitly, so that the figure remains self-contained and the manuscript does not depend on external graphics files.

In the two-objective case we use $\mu=10$ points and the quadratic map
\[
 f_1(x)=1-\frac{(x_1-1)^2+(x_2-1)^2}{2},\qquad
 f_2(x)=1-\frac{x_1^2+x_2^2}{2}.
\]
Its efficient set is the diagonal segment $x_1=x_2=t$, $0\le t\le 1$, and the Pareto front is the curve $(2t-t^2,1-t^2)$.  The initial points are deliberately placed off this diagonal, which produces several nondomination layers.  The simplex-constrained three-objective run uses the supersphere benchmark with $\gamma=1$ and a perturbed Das--Dennis grid with $H=4$, hence $\mu=15$ points.  In this case the initial set is already on the Pareto-front surface and all points remain mutually nondominated; the convergence therefore consists mainly of redistribution on the front.  The layered-start three-objective run also uses $\mu=15$ points, but samples the initial decisions in $[-0.25,1.25]^3$ and projects onto $[-0.4,1.4]^3$.  It uses the same layer weight $\varepsilon=10^{-3}$ as the other convergence traces.  The sampled layer profile evolves from $8+5+2$ through $11+4$ and reaches a single nondominated layer only after several ascent steps.

The 500-episode variants replace the previous small one-point stochastic restart by a larger backtracking perturbation strategy.  When the optimized layered indicator grows by less than $5\cdot 10^{-3}$ over a ten-episode window, several points are perturbed simultaneously with step length about $0.16$.  The perturbed state is allowed to have a temporarily lower indicator value; after this backtracking move, the deterministic gradient ascent continues until the next stagnation.  To avoid reporting a run that ends immediately after a random move, perturbations are disabled during the final ten episodes.  With the fixed seed used here, each of the four recovery runs makes $38$ accepted perturbation/backtracking moves.  The hypervolume runs use exactly the same starting clouds and recovery rule as the magnitude-indicator runs, but replace the front contribution by the ordinary anchored hypervolume of each layer.

As a gradient-free reference, we also use a projected stochastic hillclimber.  Starting from the same approximation set and feasible projection, it repeatedly selects one point, perturbs it in a random unit direction, projects the whole set back to the feasible domain, and accepts the trial set only if the chosen layered indicator does not decrease.  Failed trials shrink the step size, while stagnation can trigger the same recovery logic as in the gradient runs.  Thus the method is an accept--reject baseline for the same nonsmooth scalar objective rather than a different quality indicator; Algorithm~\ref{alg:stochastic-reference} gives the complete pseudocode and the source-code reference is listed in the reproducibility paragraph.

The CPU times in Table~\ref{tab:convergence-growth-summary} are process CPU seconds measured by the reproduction script on the execution machine; they are intended as order-of-magnitude reproducibility information rather than hardware-independent benchmark times. We do not repeat the simplex-constrained $H=4$, $\mu=15$ supersphere view here, since the corresponding final point-set comparison for $\gamma=1$ is already shown in Figure~\ref{fig:supersphere-3d-comparison}. The convergence section therefore keeps only the layered-start three-objective view in Figure~\ref{fig:convergence-layered-start-3d}, which is the nonredundant case: it starts with several dominated layers and visibly collapses toward a single nondominated layer.

\def\ConvTwoDCoords{(0,1.97332) (20,2.19575) (40,2.19838) (50,2.19842)}
\def\ConvThreeDCoords{(0,3.11902) (20,3.12557) (40,3.12568) (60,3.12617) (70,3.12721)}
\def\ConvThreeDFarCoords{(0,2.70275) (20,3.11784) (40,3.12891) (45,3.12922)}
\def\ConvThreeDRecoveryCoords{(0,2.70275) (20,3.04563) (40,3.11894) (60,3.12791) (80,3.12811) (100,3.12476) (120,3.12891) (140,3.12968) (160,3.12785) (180,3.11622) (200,3.1296) (220,3.12935) (240,3.12462) (260,3.13029) (280,3.12864) (300,3.12394) (320,3.13017) (340,3.12986) (360,3.12462) (380,3.13051) (400,3.12905) (420,3.11921) (440,3.13007) (460,3.12925) (480,3.12569) (500,3.13047)}
\def\ConvThreeDHFiveRecoveryCoords{(0,2.90496) (20,3.10371) (40,3.1387) (60,3.14385) (80,3.13844) (100,3.14895) (120,3.14875) (140,3.14565) (160,3.14916) (180,3.14847) (200,3.14497) (220,3.1495) (240,3.14809) (260,3.14129) (280,3.14861) (300,3.1484) (320,3.1428) (340,3.14859) (360,3.1487) (380,3.14641) (400,3.14935) (420,3.14858) (440,3.14662) (460,3.14911) (480,3.14776) (500,3.14948)}
\def\ConvThreeDRecoveryHVCoords{(0,0.32132) (20,0.48307) (40,0.52525) (60,0.53459) (80,0.53498) (100,0.53202) (120,0.53646) (140,0.53524) (160,0.53309) (180,0.52111) (200,0.53586) (220,0.53452) (240,0.5333) (260,0.52854) (280,0.53602) (300,0.53687) (320,0.52813) (340,0.53963) (360,0.53749) (380,0.53408) (400,0.53894) (420,0.53933) (440,0.5383) (460,0.52916) (480,0.53985) (500,0.54011)}
\def\ConvThreeDHFiveRecoveryHVCoords{(0,0.37968) (20,0.51546) (40,0.54353) (60,0.55058) (80,0.55344) (100,0.5501) (120,0.55649) (140,0.55579) (160,0.55405) (180,0.55025) (200,0.55556) (220,0.55642) (240,0.55101) (260,0.55684) (280,0.55497) (300,0.55312) (320,0.55602) (340,0.55538) (360,0.55553) (380,0.55206) (400,0.55613) (420,0.55642) (440,0.55489) (460,0.54747) (480,0.55593) (500,0.55632)}
\def\ConvTwoDInitial{(-0.08040,0.95960) (0.18195,0.89195) (0.14235,0.99235) (0.45980,0.87980) (0.23100,0.97100) (0.55715,0.88715) (0.43635,0.88635) (0.65375,0.80375) (0.44880,0.76880) (0.62720,0.66720)}
\def\ConvTwoDFinal{(0.26304,0.97997) (0.68426,0.80807) (0.02047,0.99989) (0.86221,0.60461) (0.56932,0.88184) (0.92717,0.46690) (0.78014,0.71792) (0.99988,0.02181) (0.43427,0.93857) (0.97574,0.28726)}
\def\ConvTwoDPathA{(-0.08040,0.95960) (0.24211,0.98325) (0.25684,0.98098) (0.26304,0.97997)}
\def\ConvTwoDPathB{(0.18195,0.89195) (0.56965,0.88159) (0.67407,0.81587) (0.68426,0.80807)}
\def\ConvTwoDPathC{(0.14235,0.99235) (0.04014,0.99919) (0.01515,0.99994) (0.02047,0.99989)}
\def\ConvTwoDPathD{(0.45980,0.87980) (0.76819,0.73112) (0.85557,0.61564) (0.86221,0.60461)}
\def\ConvTwoDPathE{(0.23100,0.97100) (0.45548,0.93131) (0.56037,0.88646) (0.56932,0.88184)}
\def\ConvTwoDPathF{(0.55715,0.88715) (0.86641,0.59741) (0.92522,0.47215) (0.92717,0.46690)}
\def\ConvTwoDPathG{(0.43635,0.88635) (0.65441,0.82990) (0.76967,0.72953) (0.78014,0.71792)}
\def\ConvTwoDPathH{(0.65375,0.80375) (0.99806,0.08610) (0.99982,0.02692) (0.99988,0.02181)}
\def\ConvTwoDPathI{(0.44880,0.76880) (0.42360,0.94202) (0.42090,0.94287) (0.43427,0.93857)}
\def\ConvTwoDPathJ{(0.62720,0.66720) (0.94346,0.41898) (0.97492,0.29167) (0.97574,0.28726)}
\def\ConvThreeDInitial{(0.00012,0.00024,1.00000) (0.19104,0.42143,0.94267) (0.25000,0.74968,0.75032) (0.19575,0.93950,0.43129) (0.00000,1.00000,0.00000) (0.43126,0.18541,0.93956) (0.55513,0.56048,0.81712) (0.57161,0.80263,0.57263) (0.43841,0.93720,0.18804) (0.74329,0.24996,0.75662) (0.81388,0.55008,0.57168) (0.75974,0.73998,0.26068) (0.94715,0.17704,0.40693) (0.93130,0.45552,0.19341) (0.99994,0.00750,0.01505)}
\def\ConvThreeDFinal{(0.06610,0.06610,0.99848) (0.27160,0.37647,0.95082) (0.31160,0.78937,0.70421) (0.25894,0.94964,0.38596) (0.06352,0.99859,0.06352) (0.51521,0.25202,0.90695) (0.46420,0.59849,0.83287) (0.60780,0.82483,0.47268) (0.49025,0.91734,0.24637) (0.71729,0.31895,0.77749) (0.83311,0.46281,0.59882) (0.78150,0.71279,0.31797) (0.95074,0.26985,0.37752) (0.90814,0.51246,0.25134) (0.99842,0.06731,0.06731)}
\def\ConvThreeDFarInitial{(0.92688,0.37612,0.03290) (-0.84716,0.34796,0.49719) (0.42595,0.61024,0.33143) (0.48400,0.30517,-0.91450) (0.62285,-0.61288,0.43122) (-0.19041,0.84088,0.35830) (0.58438,0.76884,0.17729) (-0.11108,0.70843,0.67328) (0.11755,-0.23001,0.69029) (0.40933,-0.03361,-0.08624) (0.91969,0.47040,0.08967) (0.81800,0.52380,0.20113) (-0.38610,0.21937,0.28621) (0.53409,0.85470,0.48021) (0.84580,0.46122,0.54178)}
\def\ConvThreeDFarFinal{(0.99840,0.06775,0.06775) (0.26865,0.55069,0.89014) (0.46325,0.90077,0.42776) (0.66931,0.81797,0.29372) (0.83090,0.29272,0.65116) (0.29069,0.83696,0.64237) (0.30592,0.95960,0.30951) (0.37725,0.29945,0.94759) (0.08250,0.08250,0.99760) (0.77214,0.68775,0.46407) (0.94886,0.30599,0.36779) (0.89186,0.54743,0.26537) (0.42860,0.70963,0.76435) (0.04882,0.99918,0.04882) (0.65466,0.34606,0.82346)}
\def\ConvThreeDRecoveryInitial{(0.92688,0.37612,0.03290) (-0.84716,0.34796,0.49719) (0.42595,0.61024,0.33143) (0.48400,0.30517,-0.91450) (0.62285,-0.61288,0.43122) (-0.19041,0.84088,0.35830) (0.58438,0.76884,0.17729) (-0.11108,0.70843,0.67328) (0.11755,-0.23001,0.69029) (0.40933,-0.03361,-0.08624) (0.91969,0.47040,0.08967) (0.81800,0.52380,0.20113) (-0.38610,0.21937,0.28621) (0.53409,0.85470,0.48021) (0.84580,0.46122,0.54178)}
\def\ConvThreeDRecoveryFinal{(0.99874,0.06030,0.06030) (0.52922,0.36219,0.89014) (0.41699,0.76298,0.71469) (0.74032,0.74132,0.40671) (0.85967,0.29165,0.60578) (0.31084,0.95865,0.31085) (0.30874,0.88662,0.55210) (0.05927,0.05927,0.99878) (0.30441,0.30438,0.96060) (0.57825,0.87568,0.27940) (0.94981,0.32563,0.34899) (0.87841,0.57285,0.28111) (0.28786,0.61280,0.85577) (0.05409,0.99896,0.05553) (0.71258,0.41331,0.76579)}
\def\ConvThreeDHFiveRecoveryInitial{(0.92688,0.37612,0.03290) (-0.84716,0.34796,0.49719) (0.42595,0.61024,0.33143) (0.48400,0.30517,-0.91450) (0.62285,-0.61288,0.43122) (-0.19041,0.84088,0.35830) (0.58438,0.76884,0.17729) (-0.11108,0.70843,0.67328) (0.11755,-0.23001,0.69029) (0.40933,-0.03361,-0.08624) (0.91969,0.47040,0.08967) (0.81800,0.52380,0.20113) (-0.38610,0.21937,0.28621) (0.53409,0.85470,0.48021) (0.84580,0.46122,0.54178) (0.75777,-0.23690,0.35714) (-0.56368,0.55926,0.49094) (0.02314,0.97881,-0.24806) (0.63104,0.35229,0.80238) (0.97907,0.13055,-0.13739) (0.78554,0.47648,0.31484)}
\def\ConvThreeDHFiveRecoveryFinal{(0.96666,0.27542,0.28991) (0.26849,0.57435,0.87828) (0.23821,0.97731,0.23818) (0.82888,0.64606,0.34941) (0.81150,0.32695,0.67440) (0.58630,0.87182,0.26997) (0.40502,0.93768,0.32353) (0.24682,0.24543,0.97561) (0.38034,0.38547,0.93232) (0.71984,0.77359,0.33443) (0.91562,0.49489,0.24357) (0.66560,0.67354,0.66073) (0.35535,0.71158,0.77807) (0.24792,0.90716,0.51506) (0.56221,0.26351,0.88471) (0.90665,0.25305,0.51581) (0.32371,0.82173,0.66079) (0.03240,0.99964,0.03250) (0.03060,0.03018,0.99969) (0.99939,0.04327,0.04081) (0.69773,0.35673,0.78953)}
\def\ConvThreeDRecoveryHVFinal{(0.94443,0.35275,0.35272) (0.42621,0.81762,0.64091) (0.49313,0.71507,0.73715) (0.84426,0.40971,0.60386) (0.66377,0.69284,0.64139) (0.36010,0.94164,0.36003) (0.50953,0.87029,0.47741) (0.33999,0.33999,0.94906) (0.48403,0.45855,0.88542) (0.78526,0.66840,0.47030) (0.87993,0.52255,0.42593) (0.67548,0.79391,0.42285) (0.64223,0.42759,0.81639) (0.41633,0.61842,0.83422) (0.73970,0.50232,0.70813)}
\def\ConvThreeDHFiveRecoveryHVFinal{(0.88824,0.39580,0.51893) (0.68390,0.41133,0.79020) (0.43842,0.73935,0.73342) (0.69721,0.65686,0.64359) (0.83032,0.39225,0.63303) (0.65910,0.80189,0.43842) (0.55187,0.87652,0.38261) (0.42366,0.47075,0.89943) (0.57106,0.38784,0.86628) (0.59613,0.77168,0.60244) (0.90220,0.49771,0.36559) (0.58525,0.60103,0.77900) (0.37615,0.62907,0.83579) (0.39316,0.90626,0.47129) (0.75080,0.48814,0.70244) (0.82275,0.60391,0.48553) (0.76124,0.72170,0.39772) (0.32173,0.95522,0.32174) (0.32763,0.32758,0.95330) (0.95375,0.32663,0.32584) (0.40980,0.83130,0.62563)}

\begin{figure}[htbp]
\centering
\begin{tikzpicture}
\begin{axis}[
  width=0.64\linewidth,
  height=0.46\linewidth,
  view={135}{27},
  xlabel={$F_1$}, ylabel={$F_2$}, zlabel={$F_3$},
  xmin=0,xmax=1,ymin=0,ymax=1,zmin=0,zmax=1,
  xtick={0,1}, ytick={0,1}, ztick={0,1},
  tick label style={font=\scriptsize},
  label style={font=\scriptsize},
  title={layered-start 3-D box run, $\gamma=1$, $\mu=15$},
  title style={font=\small},
]
\addplot3[surf,shader=flat,opacity=0.16,samples=13,samples y=13,domain=0:1,y domain=0:1,draw=blue!30,fill opacity=0.16]
({1-((((x-1)^2+((1-x)*y)^2+((1-x)*(1-y))^2)/2)^1.0)},
 {1-(((x^2+(((1-x)*y)-1)^2+((1-x)*(1-y))^2)/2)^1.0)},
 {1-(((x^2+((1-x)*y)^2+(((1-x)*(1-y))-1)^2)/2)^1.0)});
\addplot3[only marks,mark=triangle*,mark size=1.15pt,gray!70!black] coordinates {\ConvThreeDFarInitial};
\addplot3[only marks,mark=*,mark size=1.45pt,red!80!black] coordinates {\ConvThreeDFarFinal};
\end{axis}
\end{tikzpicture}
\caption{Layered-start three-objective run in objective space.  The gray triangles are the initial objective vectors produced by random decision points sampled mildly outside the natural front range, from $[-0.25,1.25]^3$ and then projected to $[-0.4,1.4]^3$.  The red circles are the final points after $45$ projected ascent iterations.  The layer profile is initially $8+5+2$, then $11+4$, and becomes a single nondominated layer only after several ascent steps.}
\label{fig:convergence-layered-start-3d}
\end{figure}

\begin{figure}[htbp]
\centering
\begin{minipage}{0.32\linewidth}
\centering
\begin{tikzpicture}
\begin{axis}[
  width=\linewidth,
  height=0.70\linewidth,
  xlabel={iteration}, ylabel={layered magnitude indicator},
  tick label style={font=\scriptsize},
  label style={font=\scriptsize},
  title={2-D curved front},
  title style={font=\small},
]
\addplot[mark=*,mark size=1.2pt,thick] coordinates {\ConvTwoDCoords};
\end{axis}
\end{tikzpicture}
\end{minipage}\hfill
\begin{minipage}{0.32\linewidth}
\centering
\begin{tikzpicture}
\begin{axis}[
  width=\linewidth,
  height=0.70\linewidth,
  xlabel={iteration}, ylabel={layered magnitude indicator},
  tick label style={font=\scriptsize},
  label style={font=\scriptsize},
  title={3-D simplex},
  title style={font=\small},
]
\addplot[mark=*,mark size=1.2pt,thick] coordinates {\ConvThreeDCoords};
\end{axis}
\end{tikzpicture}
\end{minipage}\hfill
\begin{minipage}{0.32\linewidth}
\centering
\begin{tikzpicture}
\begin{axis}[
  width=\linewidth,
  height=0.70\linewidth,
  xlabel={iteration}, ylabel={layered magnitude indicator},
  tick label style={font=\scriptsize},
  label style={font=\scriptsize},
  title={3-D layered start},
  title style={font=\small},
]
\addplot[mark=*,mark size=1.2pt,thick] coordinates {\ConvThreeDFarCoords};
\end{axis}
\end{tikzpicture}
\end{minipage}
\caption{Layered magnitude growth for the three baseline representative runs.  Only every twentieth recorded iteration is plotted, with the final endpoint added when it is not a multiple of twenty.  The layered-start three-objective run shows the largest growth because several points are initially dominated, but the initial cloud is only mildly outside the natural Pareto-front range.}
\label{fig:convergence-growth}
\end{figure}

\begin{figure}[htbp]
\centering
\begin{minipage}{0.48\linewidth}
\centering
\begin{tikzpicture}
\begin{axis}[
  width=\linewidth,
  height=0.72\linewidth,
  xlabel={episode}, ylabel={layered magnitude indicator},
  tick label style={font=\scriptsize},
  label style={font=\scriptsize},
  title={magnitude-optimized runs},
  title style={font=\small},
]
\addplot[mark=*,mark size=1.0pt,thick] coordinates {\ConvThreeDRecoveryCoords};
\addplot[mark=square*,mark size=0.95pt,thick,densely dashed] coordinates {\ConvThreeDHFiveRecoveryCoords};
\end{axis}
\end{tikzpicture}
\end{minipage}\hfill
\begin{minipage}{0.48\linewidth}
\centering
\begin{tikzpicture}
\begin{axis}[
  width=\linewidth,
  height=0.72\linewidth,
  xlabel={episode}, ylabel={layered hypervolume},
  tick label style={font=\scriptsize},
  label style={font=\scriptsize},
  title={hypervolume-optimized runs},
  title style={font=\small},
]
\addplot[mark=triangle*,mark size=1.0pt,thick] coordinates {\ConvThreeDRecoveryHVCoords};
\addplot[mark=diamond*,mark size=0.95pt,thick,densely dashed] coordinates {\ConvThreeDHFiveRecoveryHVCoords};
\end{axis}
\end{tikzpicture}
\end{minipage}

\caption{Longer 500-episode 3-D layered-start runs with stagnation recovery.  The left panel optimizes the layered magnitude indicator and the right panel optimizes the layered hypervolume indicator.  In both panels, the solid curve is the $H=4$ budget with $\mu=15$ and the dashed curve is the $H=5$ budget with $\mu=21$.  The plotted curves use every twentieth recorded episode and include the endpoint at episode 500.  After a ten-episode window with indicator growth below $5\cdot 10^{-3}$, the algorithm applies a larger multi-point perturbation, allows a temporary backtracking drop in the optimized indicator, and then resumes gradient ascent.  Perturbations are disabled in the final ten episodes.}
\label{fig:convergence-stochastic-recovery}
\end{figure}

\begin{figure}[htbp]
\centering
\begin{minipage}{0.48\linewidth}
\centering
\begin{tikzpicture}
\begin{axis}[
  width=\linewidth,
  height=0.78\linewidth,
  view={135}{27},
  xlabel={$F_1$}, ylabel={$F_2$}, zlabel={$F_3$},
  xmin=0,xmax=1,ymin=0,ymax=1,zmin=0,zmax=1,
  xtick={0,1}, ytick={0,1}, ztick={0,1},
  tick label style={font=\scriptsize},
  label style={font=\scriptsize},
  title={$H=4$, $\mu=15$, 500 episodes},
  title style={font=\small},
]
\addplot3[surf,shader=flat,opacity=0.16,samples=13,samples y=13,domain=0:1,y domain=0:1,draw=blue!30,fill opacity=0.16]
({1-((((x-1)^2+((1-x)*y)^2+((1-x)*(1-y))^2)/2)^1.0)},
 {1-(((x^2+(((1-x)*y)-1)^2+((1-x)*(1-y))^2)/2)^1.0)},
 {1-(((x^2+((1-x)*y)^2+(((1-x)*(1-y))-1)^2)/2)^1.0)});
\addplot3[only marks,mark=*,mark size=1.25pt,red!80!black] coordinates {\ConvThreeDRecoveryFinal};
\addplot3[only marks,mark=triangle*,mark size=1.25pt,black] coordinates {\ConvThreeDRecoveryHVFinal};
\end{axis}
\end{tikzpicture}
\end{minipage}\hfill
\begin{minipage}{0.48\linewidth}
\centering
\begin{tikzpicture}
\begin{axis}[
  width=\linewidth,
  height=0.78\linewidth,
  view={135}{27},
  xlabel={$F_1$}, ylabel={$F_2$}, zlabel={$F_3$},
  xmin=0,xmax=1,ymin=0,ymax=1,zmin=0,zmax=1,
  xtick={0,1}, ytick={0,1}, ztick={0,1},
  tick label style={font=\scriptsize},
  label style={font=\scriptsize},
  title={$H=5$, $\mu=21$, 500 episodes},
  title style={font=\small},
]
\addplot3[surf,shader=flat,opacity=0.16,samples=13,samples y=13,domain=0:1,y domain=0:1,draw=blue!30,fill opacity=0.16]
({1-((((x-1)^2+((1-x)*y)^2+((1-x)*(1-y))^2)/2)^1.0)},
 {1-(((x^2+(((1-x)*y)-1)^2+((1-x)*(1-y))^2)/2)^1.0)},
 {1-(((x^2+((1-x)*y)^2+(((1-x)*(1-y))-1)^2)/2)^1.0)});
\addplot3[only marks,mark=*,mark size=1.05pt,red!80!black] coordinates {\ConvThreeDHFiveRecoveryFinal};
\addplot3[only marks,mark=triangle*,mark size=1.05pt,black] coordinates {\ConvThreeDHFiveRecoveryHVFinal};
\end{axis}
\end{tikzpicture}
\end{minipage}
\caption{Final objective-space approximation sets for the 500-episode recovery runs.  The transparent blue surface is the analytic supersphere Pareto front for $\gamma=1$.  Red circles are the final sets obtained by optimizing the layered magnitude indicator, and black triangles are the final sets obtained by optimizing the layered hypervolume indicator.  The hypervolume-optimized sets move more strongly toward balanced interior volume, while the magnitude-indicator-optimized sets retain more emphasis on edge and face contributions.}
\label{fig:convergence-recovery-final-sets}
\end{figure}

\begin{table}[htbp]
\centering
\small
\setlength{\tabcolsep}{3pt}
\begin{tabular}{llrrrrrr}
\hline
case & crit. & $\mu$ & iters & CPU [s] & value$_0$ & value$_f$ & growth \\
\hline
curved 2-D front & Mag & 10 & 50 & 2.70 & 1.97332 & 2.19842 & $0.2251$ $(+11.41\%)$ \\
3-D supersphere, $\gamma=1$, $H=4$ & Mag & 15 & 70 & 0.50 & 3.11902 & 3.12721 & $0.00819$ $(+0.26\%)$ \\
3-D layered box, $\gamma=1$ & Mag & 15 & 45 & 0.13 & 2.70275 & 3.12922 & $0.42648$ $(+15.78\%)$ \\
3-D recovery box, $\gamma=1$ & Mag & 15 & 500 & 1.75 & 2.70275 & 3.13047 & $0.42772$ $(+15.83\%)$ \\
3-D recovery box, $\gamma=1$, $H=5$ & Mag & 21 & 500 & 3.12 & 2.90496 & 3.14948 & $0.24452$ $(+8.42\%)$ \\
3-D recovery box, $\gamma=1$ & HV & 15 & 500 & 1.52 & 0.32132 & 0.54011 & $0.21879$ $(+68.09\%)$ \\
3-D recovery box, $\gamma=1$, $H=5$ & HV & 21 & 500 & 2.66 & 0.37968 & 0.55632 & $0.17664$ $(+46.52\%)$ \\
\hline
\end{tabular}
\caption{Overall optimized-indicator growth in the convergence runs.  Rows marked Mag report the layered magnitude indicator; rows marked HV report layered hypervolume.  The simplex-constrained 3-D run starts already on the front, whereas the layered-start box runs begin with several dominated layers and therefore have a much larger indicator increase.  The four recovery rows use the same multi-point backtracking perturbation rule for 500 episodes.  CPU time is reported in process seconds measured by the reproduction script on the execution machine.}
\label{tab:convergence-growth-summary}
\end{table}

\begin{table}[htbp]
\centering
\small
\setlength{\tabcolsep}{4pt}
\begin{tabular}{lrrrl}
\hline
case & $\mu$ & iter. & $\Mag^\ell$ & layer sizes \\
\hline
2-D curved front & 10 & 0 & 1.97332 & $4+5+1$ \\
2-D curved front & 10 & 20 & 2.19575 & $10$ \\
2-D curved front & 10 & 40 & 2.19838 & $10$ \\
2-D curved front & 10 & 50 & 2.19842 & $10$ \\
3-D supersphere & 15 & 0 & 3.11902 & $15$ \\
3-D supersphere & 15 & 20 & 3.12557 & $15$ \\
3-D supersphere & 15 & 40 & 3.12568 & $15$ \\
3-D supersphere & 15 & 60 & 3.12617 & $15$ \\
3-D supersphere & 15 & 70 & 3.12721 & $15$ \\
3-D layered box & 15 & 0 & 2.70275 & $8+5+2$ \\
3-D layered box & 15 & 20 & 3.11784 & $15$ \\
3-D layered box & 15 & 40 & 3.12891 & $15$ \\
3-D layered box & 15 & 45 & 3.12922 & $15$ \\
\hline
\end{tabular}
\caption{Layer-size snapshots for the baseline convergence runs.  Rows are reported every twentieth iteration, with the endpoint included explicitly.  The notation $8+5+2$ means eight points in the first nondominated layer, five points in the second layer, and two points in the third.}
\label{tab:convergence-layer-snapshots}
\end{table}

\begin{table}[htbp]
\centering
\small
\setlength{\tabcolsep}{4pt}
\begin{tabular}{rrrrl}
\hline
iter. & $\Mag^\ell$ $(H=4)$ & layers $(H=4)$ & $\Mag^\ell$ $(H=5)$ & layers $(H=5)$ \\
\hline
0 & 2.70275 & $8+5+2$ & 2.90496 & $11+8+2$ \\
20 & 3.04563 & $11+4$ & 3.10371 & $17+4$ \\
40 & 3.11894 & $15$ & 3.1387 & $20+1$ \\
60 & 3.12791 & $15$ & 3.14385 & $21$ \\
80 & 3.12811 & $15$ & 3.13844 & $21$ \\
100 & 3.12476 & $15$ & 3.14895 & $21$ \\
120 & 3.12891 & $15$ & 3.14875 & $21$ \\
140 & 3.12968 & $15$ & 3.14565 & $21$ \\
160 & 3.12785 & $15$ & 3.14916 & $21$ \\
180 & 3.11622 & $15$ & 3.14847 & $21$ \\
200 & 3.1296 & $15$ & 3.14497 & $21$ \\
220 & 3.12935 & $15$ & 3.1495 & $21$ \\
240 & 3.12462 & $15$ & 3.14809 & $21$ \\
260 & 3.13029 & $15$ & 3.14128 & $21$ \\
280 & 3.12864 & $15$ & 3.14861 & $21$ \\
300 & 3.12394 & $15$ & 3.1484 & $21$ \\
320 & 3.13017 & $15$ & 3.1428 & $21$ \\
340 & 3.12986 & $15$ & 3.14859 & $21$ \\
360 & 3.12462 & $15$ & 3.1487 & $21$ \\
380 & 3.13051 & $15$ & 3.14641 & $21$ \\
400 & 3.12905 & $15$ & 3.14935 & $21$ \\
420 & 3.11921 & $15$ & 3.14858 & $21$ \\
440 & 3.13007 & $15$ & 3.14662 & $21$ \\
460 & 3.12925 & $15$ & 3.14911 & $21$ \\
480 & 3.12569 & $15$ & 3.14776 & $21$ \\
500 & 3.13047 & $15$ & 3.14948 & $21$ \\
\hline
\end{tabular}
\caption{Layer-size snapshots for the 500-episode recovery runs, sampled every twentieth episode.  The $H=4$ run uses $\mu=15$ points and the $H=5$ run uses $\mu=21$ points.  Both runs start with multiple layers and then reach a single nondominated layer.}
\label{tab:convergence-recovery-layer-snapshots}
\end{table}

\begin{table}[htbp]
\centering
\small
\setlength{\tabcolsep}{4pt}
\begin{tabular}{rrrrl}
\hline
iter. & $\HV^\ell$ $(H=4)$ & layers $(H=4)$ & $\HV^\ell$ $(H=5)$ & layers $(H=5)$ \\
\hline
0 & 0.32132 & $8+5+2$ & 0.37968 & $11+8+2$ \\
20 & 0.48307 & $11+4$ & 0.51546 & $16+5$ \\
40 & 0.52525 & $13+1+1$ & 0.54353 & $18+2+1$ \\
60 & 0.53459 & $14+1$ & 0.55058 & $20+1$ \\
80 & 0.53498 & $14+1$ & 0.55344 & $20+1$ \\
100 & 0.53202 & $14+1$ & 0.5501 & $21$ \\
120 & 0.53646 & $14+1$ & 0.55649 & $21$ \\
140 & 0.53524 & $14+1$ & 0.55579 & $21$ \\
160 & 0.53309 & $14+1$ & 0.55405 & $21$ \\
180 & 0.52111 & $14+1$ & 0.55025 & $21$ \\
200 & 0.53586 & $14+1$ & 0.55556 & $21$ \\
220 & 0.53452 & $14+1$ & 0.55642 & $21$ \\
240 & 0.5333 & $14+1$ & 0.55101 & $21$ \\
260 & 0.52854 & $14+1$ & 0.55684 & $21$ \\
280 & 0.53602 & $14+1$ & 0.55497 & $21$ \\
300 & 0.53687 & $15$ & 0.55312 & $21$ \\
320 & 0.52813 & $15$ & 0.55602 & $21$ \\
340 & 0.53963 & $15$ & 0.55538 & $21$ \\
360 & 0.53749 & $15$ & 0.55553 & $21$ \\
380 & 0.53408 & $15$ & 0.55206 & $21$ \\
400 & 0.53894 & $15$ & 0.55613 & $21$ \\
420 & 0.53933 & $15$ & 0.55642 & $21$ \\
440 & 0.53831 & $15$ & 0.55489 & $21$ \\
460 & 0.52916 & $15$ & 0.54747 & $21$ \\
480 & 0.53985 & $15$ & 0.55593 & $21$ \\
500 & 0.54011 & $15$ & 0.55632 & $21$ \\
\hline
\end{tabular}
\caption{Layer-size snapshots for the 500-episode hypervolume-optimized recovery runs, sampled every twentieth episode.  The $H=4$ run uses $\mu=15$ points and the $H=5$ run uses $\mu=21$ points.  These runs use the same starting clouds and perturbation/backtracking strategy as the magnitude-optimized recovery runs, but optimize layered hypervolume.}
\label{tab:convergence-hv-recovery-layer-snapshots}
\end{table}

The traces illustrate the same layer mechanism under both indicators.  In two objectives the first phase is dominated by promotion through nondomination layers: the profile $4+5+1$ has become a single first layer by the first sampled checkpoint at iteration $20$.  In the simplex-constrained supersphere run, all points are mutually nondominated from the beginning and the magnitude-indicator increase reflects redistribution on the surface.  In the layered-start box run, several points begin just outside the nondominated range.  The layer profile remains multi-layered initially and reaches a single nondominated layer by the first sampled checkpoint at iteration $20$.  The overall layered magnitude indicator grows by about $15.8\%$ and then continues with a shorter redistribution phase on the front.

The 500-episode variants use the same initial layer structures but replace the small accept-only restart by a larger backtracking perturbation.  The convergence curves therefore show small nonmonotone drops after some stagnation windows; these drops are deliberate and are followed by gradient-only recovery phases.  In the magnitude-indicator-optimized $H=4$ run, the final layered magnitude-indicator value is $3.13047$, slightly above the 45-episode deterministic run.  In the magnitude-indicator-optimized $H=5$ run, the larger budget reaches a single nondominated layer and a final magnitude-indicator value $3.14948$.  The hypervolume-optimized runs show the corresponding behavior for the volume term: the final layered hypervolume values are $0.54011$ for $H=4$ and $0.55632$ for $H=5$.  The absence of perturbations in the last ten episodes ensures that each reported endpoint is produced by a final deterministic polishing phase rather than by a last random move.

Figure~\ref{fig:h5-iter30-magnitude-vectorfield} complements the sampled convergence traces with a single instantaneous vector-field snapshot. It shows the $H=5$ magnitude-indicator-optimized run at iteration~30, where the layer profile is still $18+3$. The left panel displays the decision-space ascent directions obtained by pulling the layered magnitude-indicator gradient back through the objective Jacobians and normalizing rowwise before projection. The right panel shows the corresponding objective-space gradient directions. Thus the figure separates two geometries of the same update: the objective-space arrows show the local layered magnitude-indicator ascent direction, while the decision-space arrows show the feasible pullback direction available to the projected optimizer on the supersphere decision domain.

\begin{figure}[htbp]
\centering
\begin{tikzpicture}
\pgfplotsset{
  vectorfield axis/.style={
    width=0.48\linewidth,
    height=0.60\linewidth,
    view={315}{24},
    tick label style={font=\scriptsize},
    label style={font=\scriptsize},
    title style={font=\small, yshift=-0.5ex},
    grid=major,
    grid style={gray!30},
    axis background/.style={fill=gray!4},
    axis lines=box,
    clip=false,
    every axis plot/.append style={line join=round},
  }
}
\begin{axis}[
  vectorfield axis,
  name=decaxis,
  at={(0cm,0cm)},
  xlabel={$x_1$}, ylabel={$x_2$}, zlabel={$x_3$},
  title={Decision space: projected ascent},
  xmin=-0.45,xmax=1.45,
  ymin=-0.45,ymax=1.45,
  zmin=-0.45,zmax=1.45,
  xtick={-0.25,0,0.5,1.0,1.25},
  ytick={-0.25,0,0.5,1.0,1.25},
  ztick={-0.25,0,0.5,1.0,1.25},
]
\addplot3[densely dashed, gray!75, line width=0.45pt] coordinates {(1,0,0) (0,1,0) (0,0,1) (1,0,0)};
\addplot3[only marks, mark=*, mark size=1.70pt, blue!72!black] coordinates {(0.78748,0.10626,0.10626) (-0.0541,0.58359,0.8089) (0.25675,0.53212,0.36454) (0.63873,-0.02496,0.53736) (0.10616,0.78724,0.11306) (0.36032,0.61385,0.02539) (0.03957,0.257,0.70328) (0.31026,0.11901,0.86861) (0.63453,0.25569,0.1098) (0.52739,0.42378,0.04918) (0.11227,0.63226,0.25547) (0.35023,0.04496,0.60462) (0.66823,0.05269,0.34998) (0.00968,0.58036,0.60756) (0.02329,0.95349,0.02322) (0.03348,0.033,0.93199) (0.96104,0.01948,0.01948) (0.35947,0.18197,0.39718)};
\addplot3[only marks, mark=triangle*, mark size=1.95pt, orange!90!black] coordinates {(0.7056,0.65793,-0.05024) (0.7654,0.45499,0.59597) (0.46513,0.74231,0.87153)};
\draw[-{Latex[length=0.75mm,width=0.60mm]}, blue!70!black, line width=0.24pt, opacity=0.88] (axis cs:0.78748,0.10626,0.10626) -- (axis cs:0.68134,0.15933,0.15934);
\draw[-{Latex[length=0.75mm,width=0.60mm]}, blue!70!black, line width=0.24pt, opacity=0.88] (axis cs:-0.0541,0.58359,0.8089) -- (axis cs:-0.02349,0.47662,0.74167);
\draw[-{Latex[length=0.75mm,width=0.60mm]}, blue!70!black, line width=0.24pt, opacity=0.88] (axis cs:0.25675,0.53212,0.36454) -- (axis cs:0.13595,0.48531,0.3753);
\draw[-{Latex[length=0.75mm,width=0.60mm]}, blue!70!black, line width=0.24pt, opacity=0.88] (axis cs:0.7056,0.65793,-0.05024) -- (axis cs:0.60913,0.58374,-0.00452);
\draw[-{Latex[length=0.75mm,width=0.60mm]}, blue!70!black, line width=0.24pt, opacity=0.88] (axis cs:0.63873,-0.02496,0.53736) -- (axis cs:0.57663,0.01634,0.43089);
\draw[-{Latex[length=0.75mm,width=0.60mm]}, blue!70!black, line width=0.24pt, opacity=0.88] (axis cs:0.10616,0.78724,0.11306) -- (axis cs:0.16263,0.67641,0.15083);
\draw[-{Latex[length=0.75mm,width=0.60mm]}, blue!70!black, line width=0.24pt, opacity=0.88] (axis cs:0.36032,0.61385,0.02539) -- (axis cs:0.45128,0.52102,0.02815);
\draw[-{Latex[length=0.75mm,width=0.60mm]}, blue!70!black, line width=0.24pt, opacity=0.88] (axis cs:0.03957,0.257,0.70328) -- (axis cs:0.04594,0.34587,0.60862);
\draw[-{Latex[length=0.75mm,width=0.60mm]}, blue!70!black, line width=0.24pt, opacity=0.88] (axis cs:0.31026,0.11901,0.86861) -- (axis cs:0.20667,0.06241,0.81415);
\draw[-{Latex[length=0.75mm,width=0.60mm]}, blue!70!black, line width=0.24pt, opacity=0.88] (axis cs:0.7654,0.45499,0.59597) -- (axis cs:0.68664,0.36036,0.55424);
\draw[-{Latex[length=0.75mm,width=0.60mm]}, blue!70!black, line width=0.24pt, opacity=0.88] (axis cs:0.63453,0.25569,0.1098) -- (axis cs:0.74056,0.20675,0.05268);
\draw[-{Latex[length=0.75mm,width=0.60mm]}, blue!70!black, line width=0.24pt, opacity=0.88] (axis cs:0.52739,0.42378,0.04918) -- (axis cs:0.43231,0.51221,0.05538);
\draw[-{Latex[length=0.75mm,width=0.60mm]}, blue!70!black, line width=0.24pt, opacity=0.88] (axis cs:0.46513,0.74231,0.87153) -- (axis cs:0.38636,0.64051,0.85337);
\draw[-{Latex[length=0.75mm,width=0.60mm]}, blue!70!black, line width=0.24pt, opacity=0.88] (axis cs:0.11227,0.63226,0.25547) -- (axis cs:0.07197,0.73745,0.19058);
\draw[-{Latex[length=0.75mm,width=0.60mm]}, blue!70!black, line width=0.24pt, opacity=0.88] (axis cs:0.35023,0.04496,0.60462) -- (axis cs:0.25073,0.06377,0.68615);
\draw[-{Latex[length=0.75mm,width=0.60mm]}, blue!70!black, line width=0.24pt, opacity=0.88] (axis cs:0.66823,0.05269,0.34998) -- (axis cs:0.56988,0.048,0.2651);
\draw[-{Latex[length=0.75mm,width=0.60mm]}, blue!70!black, line width=0.24pt, opacity=0.88] (axis cs:0.00968,0.58036,0.60756) -- (axis cs:0.03572,0.4765,0.53384);
\draw[-{Latex[length=0.75mm,width=0.60mm]}, blue!70!black, line width=0.24pt, opacity=0.88] (axis cs:0.02329,0.95349,0.02322) -- (axis cs:-0.03,1.05964,-0.02963);
\draw[-{Latex[length=0.75mm,width=0.60mm]}, blue!70!black, line width=0.24pt, opacity=0.88] (axis cs:0.03348,0.033,0.93199) -- (axis cs:-0.01732,-0.01388,1.04209);
\draw[-{Latex[length=0.75mm,width=0.60mm]}, blue!70!black, line width=0.24pt, opacity=0.88] (axis cs:0.96104,0.01948,0.01948) -- (axis cs:1.06719,-0.03359,-0.03359);
\draw[-{Latex[length=0.75mm,width=0.60mm]}, blue!70!black, line width=0.24pt, opacity=0.88] (axis cs:0.35947,0.18197,0.39718) -- (axis cs:0.32629,0.17707,0.52278);
\end{axis}
\begin{axis}[
  vectorfield axis,
  at={(0.55\linewidth,0cm)},
  anchor=south west,
  xlabel={$F_1$}, ylabel={$F_2$}, zlabel={$F_3$},
  title={Objective space: magnitude gradient},
  xmin=-0.13726, xmax=1.08424,
  ymin=-0.08, ymax=1.08,
  zmin=-0.09823, zmax=1.08135,
]
\addplot3[only marks, mark=*, mark size=0.38pt, blue!45, opacity=0.16] coordinates {(0,0,1) (0.0475,0.0975,0.9975) (0.09,0.19,0.99) (0.1275,0.2775,0.9775) (0.16,0.36,0.96) (0.1875,0.4375,0.9375) (0.21,0.51,0.91) (0.2275,0.5775,0.8775) (0.24,0.64,0.84) (0.2475,0.6975,0.7975) (0.25,0.75,0.75) (0.2475,0.7975,0.6975) (0.24,0.84,0.64) (0.2275,0.8775,0.5775) (0.21,0.91,0.51) (0.1875,0.9375,0.4375) (0.16,0.96,0.36) (0.1275,0.9775,0.2775) (0.09,0.99,0.19) (0.0475,0.9975,0.0975) (0,1,0) (0.0975,0.0475,0.9975) (0.1425,0.1425,0.9925) (0.1825,0.2325,0.9825) (0.2175,0.3175,0.9675) (0.2475,0.3975,0.9475) (0.2725,0.4725,0.9225) (0.2925,0.5425,0.8925) (0.3075,0.6075,0.8575) (0.3175,0.6675,0.8175) (0.3225,0.7225,0.7725) (0.3225,0.7725,0.7225) (0.3175,0.8175,0.6675) (0.3075,0.8575,0.6075) (0.2925,0.8925,0.5425) (0.2725,0.9225,0.4725) (0.2475,0.9475,0.3975) (0.2175,0.9675,0.3175) (0.1825,0.9825,0.2325) (0.1425,0.9925,0.1425) (0.0975,0.9975,0.0475) (0.19,0.09,0.99) (0.2325,0.1825,0.9825) (0.27,0.27,0.97) (0.3025,0.3525,0.9525) (0.33,0.43,0.93) (0.3525,0.5025,0.9025) (0.37,0.57,0.87) (0.3825,0.6325,0.8325) (0.39,0.69,0.79) (0.3925,0.7425,0.7425) (0.39,0.79,0.69) (0.3825,0.8325,0.6325) (0.37,0.87,0.57) (0.3525,0.9025,0.5025) (0.33,0.93,0.43) (0.3025,0.9525,0.3525) (0.27,0.97,0.27) (0.2325,0.9825,0.1825) (0.19,0.99,0.09) (0.2775,0.1275,0.9775) (0.3175,0.2175,0.9675) (0.3525,0.3025,0.9525) (0.3825,0.3825,0.9325) (0.4075,0.4575,0.9075) (0.4275,0.5275,0.8775) (0.4425,0.5925,0.8425) (0.4525,0.6525,0.8025) (0.4575,0.7075,0.7575) (0.4575,0.7575,0.7075) (0.4525,0.8025,0.6525) (0.4425,0.8425,0.5925) (0.4275,0.8775,0.5275) (0.4075,0.9075,0.4575) (0.3825,0.9325,0.3825) (0.3525,0.9525,0.3025) (0.3175,0.9675,0.2175) (0.2775,0.9775,0.1275) (0.36,0.16,0.96) (0.3975,0.2475,0.9475) (0.43,0.33,0.93) (0.4575,0.4075,0.9075) (0.48,0.48,0.88) (0.4975,0.5475,0.8475) (0.51,0.61,0.81) (0.5175,0.6675,0.7675) (0.52,0.72,0.72) (0.5175,0.7675,0.6675) (0.51,0.81,0.61) (0.4975,0.8475,0.5475) (0.48,0.88,0.48) (0.4575,0.9075,0.4075) (0.43,0.93,0.33) (0.3975,0.9475,0.2475) (0.36,0.96,0.16) (0.4375,0.1875,0.9375) (0.4725,0.2725,0.9225) (0.5025,0.3525,0.9025) (0.5275,0.4275,0.8775) (0.5475,0.4975,0.8475) (0.5625,0.5625,0.8125) (0.5725,0.6225,0.7725) (0.5775,0.6775,0.7275) (0.5775,0.7275,0.6775) (0.5725,0.7725,0.6225) (0.5625,0.8125,0.5625) (0.5475,0.8475,0.4975) (0.5275,0.8775,0.4275) (0.5025,0.9025,0.3525) (0.4725,0.9225,0.2725) (0.4375,0.9375,0.1875) (0.51,0.21,0.91) (0.5425,0.2925,0.8925) (0.57,0.37,0.87) (0.5925,0.4425,0.8425) (0.61,0.51,0.81) (0.6225,0.5725,0.7725) (0.63,0.63,0.73) (0.6325,0.6825,0.6825) (0.63,0.73,0.63) (0.6225,0.7725,0.5725) (0.61,0.81,0.51) (0.5925,0.8425,0.4425) (0.57,0.87,0.37) (0.5425,0.8925,0.2925) (0.51,0.91,0.21) (0.5775,0.2275,0.8775) (0.6075,0.3075,0.8575) (0.6325,0.3825,0.8325) (0.6525,0.4525,0.8025) (0.6675,0.5175,0.7675) (0.6775,0.5775,0.7275) (0.6825,0.6325,0.6825) (0.6825,0.6825,0.6325) (0.6775,0.7275,0.5775) (0.6675,0.7675,0.5175) (0.6525,0.8025,0.4525) (0.6325,0.8325,0.3825) (0.6075,0.8575,0.3075) (0.5775,0.8775,0.2275) (0.64,0.24,0.84) (0.6675,0.3175,0.8175) (0.69,0.39,0.79) (0.7075,0.4575,0.7575) (0.72,0.52,0.72) (0.7275,0.5775,0.6775) (0.73,0.63,0.63) (0.7275,0.6775,0.5775) (0.72,0.72,0.52) (0.7075,0.7575,0.4575) (0.69,0.79,0.39) (0.6675,0.8175,0.3175) (0.64,0.84,0.24) (0.6975,0.2475,0.7975) (0.7225,0.3225,0.7725) (0.7425,0.3925,0.7425) (0.7575,0.4575,0.7075) (0.7675,0.5175,0.6675) (0.7725,0.5725,0.6225) (0.7725,0.6225,0.5725) (0.7675,0.6675,0.5175) (0.7575,0.7075,0.4575) (0.7425,0.7425,0.3925) (0.7225,0.7725,0.3225) (0.6975,0.7975,0.2475) (0.75,0.25,0.75) (0.7725,0.3225,0.7225) (0.79,0.39,0.69) (0.8025,0.4525,0.6525) (0.81,0.51,0.61) (0.8125,0.5625,0.5625) (0.81,0.61,0.51) (0.8025,0.6525,0.4525) (0.79,0.69,0.39) (0.7725,0.7225,0.3225) (0.75,0.75,0.25) (0.7975,0.2475,0.6975) (0.8175,0.3175,0.6675) (0.8325,0.3825,0.6325) (0.8425,0.4425,0.5925) (0.8475,0.4975,0.5475) (0.8475,0.5475,0.4975) (0.8425,0.5925,0.4425) (0.8325,0.6325,0.3825) (0.8175,0.6675,0.3175) (0.7975,0.6975,0.2475) (0.84,0.24,0.64) (0.8575,0.3075,0.6075) (0.87,0.37,0.57) (0.8775,0.4275,0.5275) (0.88,0.48,0.48) (0.8775,0.5275,0.4275) (0.87,0.57,0.37) (0.8575,0.6075,0.3075) (0.84,0.64,0.24) (0.8775,0.2275,0.5775) (0.8925,0.2925,0.5425) (0.9025,0.3525,0.5025) (0.9075,0.4075,0.4575) (0.9075,0.4575,0.4075) (0.9025,0.5025,0.3525) (0.8925,0.5425,0.2925) (0.8775,0.5775,0.2275) (0.91,0.21,0.51) (0.9225,0.2725,0.4725) (0.93,0.33,0.43) (0.9325,0.3825,0.3825) (0.93,0.43,0.33) (0.9225,0.4725,0.2725) (0.91,0.51,0.21) (0.9375,0.1875,0.4375) (0.9475,0.2475,0.3975) (0.9525,0.3025,0.3525) (0.9525,0.3525,0.3025) (0.9475,0.3975,0.2475) (0.9375,0.4375,0.1875) (0.96,0.16,0.36) (0.9675,0.2175,0.3175) (0.97,0.27,0.27) (0.9675,0.3175,0.2175) (0.96,0.36,0.16) (0.9775,0.1275,0.2775) (0.9825,0.1825,0.2325) (0.9825,0.2325,0.1825) (0.9775,0.2775,0.1275) (0.99,0.09,0.19) (0.9925,0.1425,0.1425) (0.99,0.19,0.09) (0.9975,0.0475,0.0975) (0.9975,0.0975,0.0475) (1,0,0)};
\addplot3[only marks, mark=*, mark size=1.70pt, blue!72!black] coordinates {(0.96613,0.28491,0.2849) (-0.05302,0.58468,0.80999) (0.51577,0.79114,0.62356) (0.79005,0.12636,0.68868) (0.28426,0.96534,0.29115) (0.60667,0.86021,0.27175) (0.25847,0.47589,0.92217) (0.37781,0.18656,0.93616) (0.8945,0.51566,0.36976) (0.79732,0.69371,0.3191) (0.37347,0.89345,0.51666) (0.60511,0.29983,0.8595) (0.88233,0.2668,0.56408) (0.15666,0.72734,0.75454) (0.06817,0.99838,0.06811) (0.09807,0.0976,0.99658) (0.99886,0.0573,0.0573) (0.69943,0.52193,0.73714)};
\addplot3[only marks, mark=triangle*, mark size=1.95pt, orange!90!black] coordinates {(0.73897,0.6913,-0.01688) (0.69138,0.38098,0.52195) (0.20166,0.47884,0.60806)};
\draw[-{Latex[length=0.75mm,width=0.60mm]}, blue!70!black, line width=0.24pt, opacity=0.88] (axis cs:0.96613,0.28491,0.2849) -- (axis cs:1.04889,0.29861,0.29861);
\draw[-{Latex[length=0.75mm,width=0.60mm]}, blue!70!black, line width=0.24pt, opacity=0.88] (axis cs:-0.05302,0.58468,0.80999) -- (axis cs:-0.05092,0.62326,0.8857);
\draw[-{Latex[length=0.75mm,width=0.60mm]}, blue!70!black, line width=0.24pt, opacity=0.88] (axis cs:0.51577,0.79114,0.62356) -- (axis cs:0.53448,0.8568,0.67419);
\draw[-{Latex[length=0.75mm,width=0.60mm]}, blue!70!black, line width=0.24pt, opacity=0.88] (axis cs:0.73897,0.6913,-0.01688) -- (axis cs:0.79826,0.75164,-0.00864);
\draw[-{Latex[length=0.75mm,width=0.60mm]}, blue!70!black, line width=0.24pt, opacity=0.88] (axis cs:0.79005,0.12636,0.68868) -- (axis cs:0.85875,0.12929,0.73865);
\draw[-{Latex[length=0.75mm,width=0.60mm]}, blue!70!black, line width=0.24pt, opacity=0.88] (axis cs:0.28426,0.96534,0.29115) -- (axis cs:0.29849,1.04793,0.30534);
\draw[-{Latex[length=0.75mm,width=0.60mm]}, blue!70!black, line width=0.24pt, opacity=0.88] (axis cs:0.60667,0.86021,0.27175) -- (axis cs:0.65603,0.92933,0.275);
\draw[-{Latex[length=0.75mm,width=0.60mm]}, blue!70!black, line width=0.24pt, opacity=0.88] (axis cs:0.25847,0.47589,0.92217) -- (axis cs:0.26323,0.50824,1.00063);
\draw[-{Latex[length=0.75mm,width=0.60mm]}, blue!70!black, line width=0.24pt, opacity=0.88] (axis cs:0.37781,0.18656,0.93616) -- (axis cs:0.39525,0.1908,1.01924);
\draw[-{Latex[length=0.75mm,width=0.60mm]}, blue!70!black, line width=0.24pt, opacity=0.88] (axis cs:0.69138,0.38098,0.52195) -- (axis cs:0.75269,0.39358,0.57946);
\draw[-{Latex[length=0.75mm,width=0.60mm]}, blue!70!black, line width=0.24pt, opacity=0.88] (axis cs:0.8945,0.51566,0.36976) -- (axis cs:0.97513,0.54127,0.37805);
\draw[-{Latex[length=0.75mm,width=0.60mm]}, blue!70!black, line width=0.24pt, opacity=0.88] (axis cs:0.79732,0.69371,0.3191) -- (axis cs:0.85485,0.7559,0.32592);
\draw[-{Latex[length=0.75mm,width=0.60mm]}, blue!70!black, line width=0.24pt, opacity=0.88] (axis cs:0.20166,0.47884,0.60806) -- (axis cs:0.20565,0.49907,0.69052);
\draw[-{Latex[length=0.75mm,width=0.60mm]}, blue!70!black, line width=0.24pt, opacity=0.88] (axis cs:0.37347,0.89345,0.51666) -- (axis cs:0.38436,0.97354,0.54296);
\draw[-{Latex[length=0.75mm,width=0.60mm]}, blue!70!black, line width=0.24pt, opacity=0.88] (axis cs:0.60511,0.29983,0.8595) -- (axis cs:0.6445,0.30577,0.93458);
\draw[-{Latex[length=0.75mm,width=0.60mm]}, blue!70!black, line width=0.24pt, opacity=0.88] (axis cs:0.88233,0.2668,0.56408) -- (axis cs:0.95805,0.27291,0.60223);
\draw[-{Latex[length=0.75mm,width=0.60mm]}, blue!70!black, line width=0.24pt, opacity=0.88] (axis cs:0.15666,0.72734,0.75454) -- (axis cs:0.16212,0.78301,0.81854);
\draw[-{Latex[length=0.75mm,width=0.60mm]}, blue!70!black, line width=0.24pt, opacity=0.88] (axis cs:0.06817,0.99838,0.06811) -- (axis cs:0.06953,1.08336,0.06946);
\draw[-{Latex[length=0.75mm,width=0.60mm]}, blue!70!black, line width=0.24pt, opacity=0.88] (axis cs:0.09807,0.0976,0.99658) -- (axis cs:0.10052,0.10004,1.08151);
\draw[-{Latex[length=0.75mm,width=0.60mm]}, blue!70!black, line width=0.24pt, opacity=0.88] (axis cs:0.99886,0.0573,0.0573) -- (axis cs:1.08384,0.05865,0.05865);
\draw[-{Latex[length=0.75mm,width=0.60mm]}, blue!70!black, line width=0.24pt, opacity=0.88] (axis cs:0.69943,0.52193,0.73714) -- (axis cs:0.74576,0.54653,0.80402);
\end{axis}
\end{tikzpicture}

\caption{Instantaneous vector-field snapshot for the $H=5$ layered magnitude-indicator run at iteration~30. The left panel shows the projected ascent directions in decision space for the 21 decision vectors, and the right panel shows the corresponding layered magnitude-indicator gradient directions in objective space. Blue circular markers indicate points in the first nondomination layer and orange triangular markers indicate the second layer. In the decision-space panel, the dashed triangle indicates the simplex used as reference for the supersphere construction; in the objective-space panel, the pale blue point cloud indicates the image of a coarse simplex grid on the supersphere front.}
\label{fig:h5-iter30-magnitude-vectorfield}
\end{figure}

\section{Discussion}
The examples illustrate a consistent computational picture for nonsmooth set-gradient ascent of layered indicators.  In the biobjective examples the leading first-layer term drives points toward the Pareto front.  Once the points have reached that front, the lower layers disappear and the remaining task is to distribute the points along it.  In the perturbed ten-point quadratic example this behavior is especially visible: in decision space the trajectories bend toward the efficient diagonal, while in objective space their images bend toward the curved front.  The short-range repulsion term is numerically useful because it resolves temporary near-collisions and produces separated front configurations without changing the leading-order preference for first-front quality.

The regularity results clarify the mathematical status of this procedure.  On chambers with fixed layer membership one has standard Lipschitz control and a piecewise-smooth structure, so Clarke-type language is natural there.  Across layer switches, however, the hard-layer finite-$\varepsilon$ scalarization can jump, and the counterexample shows that one should not expect a global locally Lipschitz theory for this exact scalar surrogate.  For that reason, the projected finite-difference and indicator-gradient methods used in the experiments are best interpreted as practical computational schemes informed by nonsmooth geometry rather than as fully analyzed subgradient algorithms.

The comparison between hypervolume and magnitude is also informative.  Layered hypervolume keeps the familiar volume-driven interpretation but extends it to deeper nondomination layers.  Layered magnitude contains hypervolume as its highest-dimensional term and adds lower-dimensional geometric contributions such as extent and projected area.  In the three-objective supersphere experiments this difference is visible in the final approximation sets: hypervolume-optimized runs emphasize balanced dominated volume, while magnitude-optimized runs retain more boundary and face information.  Both indicators fit the same nonsmooth set-gradient framework.

From a numerical perspective, the method is already effective on small but nontrivial approximation sets.  The ten-point quadratic example is useful because the front is genuinely curved and the trajectories are visibly nonlinear in both decision and objective space.  The supersphere benchmark is useful because the Pareto front is known analytically, the curvature can be tuned by the bulge parameter, and both hypervolume and magnitude can be evaluated in the same coordinate system.  Projection also makes constraint handling transparent: for the triangle problem the feasible region is treated directly in objective space, for the quadratic problem the box constraint is enforced by coordinatewise clipping, and for the supersphere examples the decision set is either the simplex or a box.

\paragraph{Availability and reproducibility.}
The reproducibility material for this paper is maintained in the public GitHub repository
\url{https://github.com/emmerichmtm/NonSmoothAscentFinal}.
The repository contains the code, parameter settings, generated data files, and a README describing how to regenerate the figures, tables, convergence traces, and stochastic reference runs.  A lightweight interactive version of the examples is also available as a Trinket at
\url{https://trinket.io/library/trinkets/992765676638}.
Script names and file organization may evolve, so the repository README is the authoritative entry point for reproduction.

\section{Future work}
Several extensions of the present layered-ascent framework are natural.  First, the present construction handles dominated points by assigning them to deeper nondomination layers with decreasing weights.  This should be compared systematically with earlier mechanisms for steering dominated points in hypervolume-indicator gradient ascent, especially the biobjective study of dominated-point steering in HV-gradient methods~\cite{wangrendeutzemmerich2016}.  Such a comparison may clarify when explicit dominated-layer ascent is preferable to steering terms that act within a conventional hypervolume-gradient framework.

Second, the numerical treatment of step sizes deserves a more systematic analysis.  The implementation used here combines normalization, projection, accept--reject tests, and stagnation recovery.  A more principled version could build directly on step-size control strategies developed for hypervolume-indicator gradient ascent~\cite{wangdeutzbackemmerich2017}, adapted to the discontinuities introduced by hard nondomination layers and to the lower-dimensional terms in the magnitude indicator.

Third, higher-order information is a promising direction.  Efficient computation of hypervolume derivatives and Hessian entries has been studied analytically, including complexity and sparsity properties of the hypervolume-indicator Hessian~\cite{deutzemmerichwang2023hessian}.  Combining those derivative formulae with the layered construction could lead to active-set-aware second-order approximations for each fixed layer chamber.  A full second-order layered method would also connect naturally to the hypervolume Newton method for constrained multiobjective optimization~\cite{wangemmerichdeutzhernandezschutze2023newton}.  In particular, one could investigate a layered Newton or quasi-Newton ascent method in which first-layer curvature information is dominant and deeper layers provide controlled correction directions for dominated points.

\section{Conclusion}
Layered set indicators give a simple way to incorporate dominated points into a unary quality objective without allowing them to outweigh the first nondominated front.  This paper has studied the construction for a general base indicator $\mathcal I$, instantiated by two anchored dominated-set indicators: hypervolume, the top-dimensional Lebesgue measure, and magnitude, the \(\ell_1\)-magnitude.  The lexicographic and infinitesimal-style formulations encode the same hierarchy for any finite-valued base indicator, while the finite-$\varepsilon$ surrogate makes both hypervolume and magnitude-indicator versions numerically accessible.

In the biobjective setting, the layered hypervolume and magnitude-indicator scalarizations are naturally piecewise smooth: on chambers with fixed layer membership one has Lipschitz control on bounded sets, whereas across layer switches the hard-layer scalarization may be discontinuous.  This regularity picture explains why a nonsmooth set-gradient viewpoint is appropriate and also why the numerical methods used in the experiments should be described as projected finite-difference or indicator-gradient ascent schemes rather than as fully analyzed generalized-gradient methods.

The computational illustrations show that this simple framework is already effective on representative small examples, including a ten-point problem with a genuinely curved Pareto front and three-objective supersphere examples comparing layered hypervolume and the layered magnitude indicator.  Taken together, the analytic observations and numerical experiments suggest that nonsmooth set-gradient ascent of layered hypervolume and magnitude indicators is a useful framework for studying Pareto-front approximation beyond the first nondominated front.

\appendix

\section{Reference pseudocode for projected set-gradient ascent}
\label{app:algorithm}

This appendix records the numerical core used in the paper without any interactive or display-related components.  The same template applies to the layered magnitude indicator and the layered hypervolume indicator; only the base layer indicator is changed. In particular, it omits plotting, keyboard control, initialization menus, and demo-specific bookkeeping, and keeps only the ascent iteration itself.

\subsection*{Symbols and default implementation constants}
We use the following notation.
\begin{description}[leftmargin=2.8cm,style=nextline]
\item[$\mu$] number of points in the approximation set.
\item[$d$] dimension of the state space of a single point. In the biobjective examples one has $d=2$; in the supersphere benchmark one has $d=3$.
\item[$\Xi\subseteq \mathbb{R}^{d}$] feasible region for one point, with product feasible set $\Xi^{\mu}$.
\item[$\Pi_{\Xi^{\mu}}$] Euclidean projection onto $\Xi^{\mu}$. For a box constraint this is coordinatewise clipping.
\item[$X=(x^{(1)},\dots,x^{(\mu)})\in \Xi^{\mu}$] current state configuration.
\item[$\Phi:\Xi\to\mathbb{R}^{m}$] map from the state space to objective space. For objective-space experiments, $\Phi$ is the identity; for decision-space experiments, $\Phi=F$.
\item[$Y(X)=(\Phi(x^{(1)}),\dots,\Phi(x^{(\mu)}))$] objective-space image of the current configuration.
\item[$\front{\ell}(Y)$] the $\ell$th nondomination layer of $Y$.
\item[$\mathcal I(\front{\ell}(Y))$] base indicator of the $\ell$th layer; in this paper $\mathcal I$ is either $\Mag(\Dom(\cdot))$ or $\HV(\cdot)$.
\item[$R_{\sigma}(Y)$] short-range repulsion term in objective space.
\item[$J_{\varepsilon,\tau}(X)$] scalar surrogate
\[
J^{\mathcal I}_{\varepsilon,\tau}(X):=\sum_{\ell=1}^{L(X)} \varepsilon^{\ell-1}\mathcal I\!\bigl(\front{\ell}(Y(X))\bigr)-\tau R_{\sigma}(Y(X)).
\]
\item[$h$] central finite-difference radius.
\item[$\alpha$] ascent step size.
\item[$K_{\max}$] maximum number of iterations.
\item[$\delta_{\mathrm{val}}$] stopping tolerance for objective-value improvement.
\item[$\texttt{normalize\_per\_point}$] Boolean switch that normalizes each point-direction separately.
\end{description}

For the reference implementation attached to the curved-front example, the default constants are
\[
\alpha=0.005,\qquad h=10^{-6},\qquad \varepsilon=10^{-3},\qquad \tau=2\cdot 10^{-4},\qquad \sigma=0.03,
\]
with \texttt{normalize\_per\_point}=\texttt{True} and, in that specific demo, box bounds $\mathrm{BOX\_LO}=0$ and $\mathrm{BOX\_HI}=1$. Example-specific runs in the main text may override these defaults.

\begin{algorithm}[htbp]
\caption{Projected set-gradient ascent for a layered indicator surrogate}
\label{alg:layered-projected-fd}
\begin{algorithmic}[1]
\Require Feasible set $\Xi$, projection $\Pi_{\Xi^{\mu}}$, state-to-objective map $\Phi$, number of points $\mu$, parameters $\varepsilon,\tau,\sigma,h,\alpha,K_{\max},\delta_{\mathrm{val}}$, normalization flag \texttt{normalize\_per\_point}, initial configuration $X^{0}\in \Xi^{\mu}$
\Ensure Final iterate $X^{k}$ and objective-space configuration $Y(X^{k})$
\State $X^{0} \gets \Pi_{\Xi^{\mu}}(X^{0})$ \Comment{Project the initial guess to feasibility if needed}
\State $V^{0} \gets J_{\varepsilon,\tau}(X^{0})$
\For{$k=0,1,\dots,K_{\max}-1$}
    \State $G^{k} \gets 0 \in \mathbb{R}^{\mu\times d}$
    \For{$i=1,2,\dots,\mu$}
        \For{$r=1,2,\dots,d$}
            \State $E \gets 0 \in \mathbb{R}^{\mu\times d}$
            \State $E_{i,r} \gets h$ \Comment{Perturb coordinate $r$ of point $i$ only}
            \State $X^{+} \gets \Pi_{\Xi^{\mu}}(X^{k}+E)$
            \State $X^{-} \gets \Pi_{\Xi^{\mu}}(X^{k}-E)$
            \State $G^{k}_{i,r} \gets \dfrac{J_{\varepsilon,\tau}(X^{+})-J_{\varepsilon,\tau}(X^{-})}{2h}$
        \EndFor
    \EndFor
    \If{\texttt{normalize\_per\_point}}
        \For{$i=1,2,\dots,\mu$}
            \State $n_i \gets \lVert G^{k}_{i,\bullet}\rVert_2$
            \If{$n_i > 10^{-12}$}
                \State $G^{k}_{i,\bullet} \gets G^{k}_{i,\bullet}/n_i$ \Comment{Equalize the step length of the active points}
            \EndIf
        \EndFor
    \EndIf
    \If{$\lVert G^{k}\rVert_F \le 10^{-12}$}
        \State \Return $X^{k},Y(X^{k})$ \Comment{No numerically meaningful ascent direction found}
    \EndIf
    \State $\widetilde X^{k+1} \gets X^{k}+\alpha G^{k}$
    \State $X^{k+1} \gets \Pi_{\Xi^{\mu}}(\widetilde X^{k+1})$ \Comment{Respect constraints after the ascent step}
    \State $V^{k+1} \gets J_{\varepsilon,\tau}(X^{k+1})$
    \If{$|V^{k+1}-V^{k}| \le \delta_{\mathrm{val}}$}
        \State \Return $X^{k+1},Y(X^{k+1})$ \Comment{Objective improvement has become negligible}
    \EndIf
\EndFor
\State \Return $X^{K_{\max}},Y(X^{K_{\max}})$
\end{algorithmic}
\end{algorithm}

\subsection*{Walkthrough of one iteration}
The logic of Algorithm~\ref{alg:layered-projected-fd} is easiest to read from the inside out.

\paragraph{Step 1: map the current state to objective space.}
The iterate is a configuration $X^{k}=(x^{(1)},\dots,x^{(\mu)})$. If the optimization is carried out directly in objective space, then $\Phi$ is the identity and $Y(X^{k})=X^{k}$. If the optimization is carried out in decision space, each point is first mapped by the objective function, so that $Y(X^{k})=(\Phi(x^{(1)}),\dots,\Phi(x^{(\mu)}))$.

\paragraph{Step 2: evaluate the layered surrogate.}
From $Y(X^{k})$ one computes the nondomination layers $\front{1},\front{2},\dots,\front{L(X^{k})}$. Each layer contributes the chosen base indicator $\mathcal I$, with weight $1,\varepsilon,\varepsilon^{2},\dots$, and the repulsion term is subtracted.  The two choices used in the experiments are $\mathcal I(B)=\Mag(\Dom(B))$ and $\mathcal I(B)=\HV(B)$, with the origin anchor suppressed. This gives the current scalar value $J_{\varepsilon,\tau}(X^{k})$.

\paragraph{Step 3: build a coordinatewise direction field.}
For each point $i$ and each coordinate $r$, the algorithm perturbs only the entry $(i,r)$ by $\pm h$, projects the perturbed configurations back to feasibility if necessary, and then forms the central finite difference
\[
\frac{J_{\varepsilon,\tau}(X^{k}+E)-J_{\varepsilon,\tau}(X^{k}-E)}{2h}.
\]
Collecting all these values yields the matrix $G^{k}\in\mathbb{R}^{\mu\times d}$. In smooth regions this approximates the ordinary gradient of the scalar surrogate with respect to the point coordinates; near switching configurations it should be interpreted as a finite-difference numerical direction field motivated by the neighboring smooth chambers.

\paragraph{Step 4: optionally normalize pointwise.}
If \texttt{normalize\_per\_point} is enabled, then each row $G^{k}_{i,\bullet}$ is divided by its Euclidean norm whenever that norm is nonzero. This produces a diffusion-like motion in which every active point takes a step of comparable length. In the curved-front experiments this is helpful because otherwise a few points with larger local finite-difference values can dominate the entire update.

\paragraph{Step 5: take a projected ascent step.}
The tentative update is $\widetilde X^{k+1}=X^{k}+\alpha G^{k}$. Because this tentative point may violate constraints, it is projected back to the feasible set, giving $X^{k+1}=\Pi_{\Xi^{\mu}}(\widetilde X^{k+1})$. For a box constraint this is simply coordinatewise clipping; for more general convex sets it is ordinary Euclidean projection.

\paragraph{Step 6: update the objective value and stop if appropriate.}
After projection, the new scalar value $V^{k+1}=J_{\varepsilon,\tau}(X^{k+1})$ is computed. The iteration terminates if either the full direction field is numerically zero or the objective improvement $|V^{k+1}-V^{k}|$ falls below the prescribed tolerance $\delta_{\mathrm{val}}$. Otherwise the method continues.

\paragraph{How this matches the main text.}
The algorithm in the appendix is the formal version of the scheme described in Section~5. Its main ingredients are exactly the same: evaluation of the layered surrogate, symmetric finite differences taken point coordinate by point coordinate, optional pointwise normalization, a small ascent step, and projection onto the feasible set. The appendix merely makes explicit two details that were only implicit in the main text: first, the distinction between state space and objective space through the map $\Phi$; and second, the use of a concrete stopping rule and default implementation constants.

\section{The supersphere benchmark}
\label{app:supersphere-benchmark}

This appendix records the analytic form of the bulged three-objective benchmark used in Section~\ref{sec:three-objective-supersphere}.  Let
\[
\Delta_2:=\{x\in\mathbb{R}^3_{\ge 0}:x_1+x_2+x_3=1\}
\]
be the standard two-simplex and let $e_1,e_2,e_3$ denote the coordinate unit vectors.  For a curvature parameter $\gamma>0$, define
\[
F_\gamma(x)=\bigl(f_1(x),f_2(x),f_3(x)\bigr),\qquad
f_i(x)=1-\left(\frac{\|x-e_i\|_2^2}{2}\right)^\gamma,
\qquad i=1,2,3.
\]
The normalization by $2$ makes the largest squared vertex-to-vertex distance equal to one after scaling.  Hence $0\le f_i(x)\le 1$ on the simplex, $F_\gamma(e_1)=(1,0,0)$, $F_\gamma(e_2)=(0,1,0)$, and $F_\gamma(e_3)=(0,0,1)$.  At the barycenter $c=(1/3,1/3,1/3)$, one obtains
\[
F_\gamma(c)=\bigl(1-3^{-\gamma},1-3^{-\gamma},1-3^{-\gamma}\bigr),
\]
which gives a convenient scalar check on the amount of curvature introduced by $\gamma$.

A useful analytic parameterization of the Pareto-front surface is obtained by writing
\[
x_1=u,
\qquad
x_2=(1-u)v,
\qquad
x_3=(1-u)(1-v),
\qquad 0\le u,v\le 1.
\]
Then
\[
\mathcal{P}_\gamma
=
\left\{F_\gamma\bigl(u,(1-u)v,(1-u)(1-v)\bigr):0\le u,v\le 1\right\}
\]
is the analytic supersphere front used in the plots.  This parameterization covers the simplex exactly, although not uniformly with respect to surface area.  The efficient set in decision space is the simplex itself: improving one objective means moving closer to one vertex, which necessarily increases the normalized distance to at least one of the other vertices.

For reproducibility, the script also contains a box-constrained variant with the same objective formula but with $x\in[-2,2]^3$.  The simplex version is the cleaner analytic benchmark because the front admits the explicit two-parameter description above.  The box-constrained version is useful as a stress test because projection no longer enforces the simplex geometry.

\section{Reference stochastic optimizer}
\label{app:stochastic-optimizer}

The stochastic reference algorithm is related to earlier cooperative swarm-based approaches to set-oriented multiobjective optimization.  Verhoef, Deutz, and Emmerich compared deterministic set-oriented hypervolume-gradient algorithms with cooperative particle-swarm algorithms for bicriteria approximation-set construction~\cite{verhoefdeutzemmerich2018swarm}.  In such swarm-based approaches, the moving particles are interpreted collectively as an approximation set rather than as independent scalar optimizers, so their motion is assessed through a set quality criterion.  The stochastic hillclimber below is deliberately simpler: it keeps the same layered scalar objective as the gradient runs, but replaces gradient information by local random accept--reject moves.

The stochastic reference algorithm uses the same layered scalar objective as the gradient method, but it does not compute an indicator gradient.  At each iteration it perturbs one approximation-set point in a random unit direction, projects the perturbed configuration back to the feasible set, and accepts the move if the selected layered indicator value does not decrease.  It is therefore a simple projected stochastic hillclimber for the nonsmooth scalar surrogate.

\begin{algorithm}[htbp]
\caption{Projected stochastic hillclimbing reference optimizer}
\label{alg:stochastic-reference}
\begin{algorithmic}[1]
\Require Feasible set $\Xi$, projection $\Pi_{\Xi^\mu}$, state-to-objective map $\Phi$, initial configuration $X^0$, initial step size $\alpha_0$, shrink factor $\rho\in(0,1)$, floor $\alpha_{\min}$, maximum retries $R$, maximum iterations $K_{\max}$, layered indicator choice $I\in\{\Mag_3,\HV_3\}$
\State $X^0\gets \Pi_{\Xi^\mu}(X^0)$
\State $V^0\gets J_I(X^0)$ and $\alpha\gets \alpha_0$
\For{$k=0,1,\ldots,K_{\max}-1$}
    \State accepted $\gets$ false, $\alpha_{\rm trial}\gets\alpha$
    \For{$r=0,1,\ldots,R$}
        \State Choose an index $i\in\{1,\ldots,\mu\}$ uniformly at random
        \State Draw $d$ from a standard normal distribution and normalize it to $\|d\|_2=1$
        \State $\widetilde X\gets X^k$ and replace $\widetilde x^{(i)}$ by $x^{k,(i)}+\alpha_{\rm trial}d$
        \State $X^{\rm trial}\gets \Pi_{\Xi^\mu}(\widetilde X)$
        \State $V^{\rm trial}\gets J_I(X^{\rm trial})$
        \If{$V^{\rm trial}\ge V^k$}
            \State $X^{k+1}\gets X^{\rm trial}$, $V^{k+1}\gets V^{\rm trial}$, $\alpha\gets\alpha_{\rm trial}$
            \State accepted $\gets$ true
            \State \textbf{break}
        \Else
            \State $\alpha_{\rm trial}\gets\max\{\rho\alpha_{\rm trial},\alpha_{\min}\}$
        \EndIf
    \EndFor
    \If{accepted is false}
        \State $X^{k+1}\gets X^k$, $V^{k+1}\gets V^k$, and keep or recover the step size according to the stagnation rule
    \EndIf
\EndFor
\State \Return $X^{K_{\max}}$
\end{algorithmic}
\end{algorithm}

In the implementation used for the three-dimensional runs, the stochastic method shares the same projection operators, nondomination-layer computation, repulsion term, retry loop, and step-size recovery logic as the gradient method.  Its source code is part of the reproducibility package referenced in the main text, and it implements the projected stochastic hillclimber summarized in Algorithm~\ref{alg:stochastic-reference}.  Its role is mainly diagnostic: it checks whether the chosen indicator can be improved by a very simple accept--reject search even in regions where indicator gradients are expensive, unreliable, or affected by many active-set switches.


\begin{thebibliography}{99}

\bibitem{clarke1990}
F.~H.~Clarke,
\emph{Optimization and Nonsmooth Analysis},
Classics in Applied Mathematics~5, SIAM, Philadelphia, 1990.

\bibitem{augerbaderbrockhoffzitzler2012}
A.~Auger, J.~Bader, D.~Brockhoff, and E.~Zitzler,
``Hypervolume-Based Multiobjective Optimization: Theoretical Foundations and Practical Implications,''
\emph{Theoretical Computer Science} \textbf{425} (2012), 75--103.
doi:10.1016/j.tcs.2011.03.012.

\bibitem{custodioetal2011}
A.~L.~Cust{\'o}dio, J.~F.~A.~Madeira, A.~I.~F.~Vaz, and L.~N.~Vicente,
``Direct Multisearch for Multiobjective Optimization,''
\emph{SIAM Journal on Optimization} \textbf{21}(3) (2011), 1109--1140.
doi:10.1137/10079731X.

\bibitem{deb2001multiobjective}
K.~Deb,
\emph{Multi-Objective Optimization Using Evolutionary Algorithms},
Wiley, Chichester, 2001.

\bibitem{deistetal2020}
T.~M.~Deist, S.~C.~Maree, T.~Alderliesten, and P.~A.~N.~Bosman,
``Multi-objective Optimization by Uncrowded Hypervolume Gradient Ascent,''
in \emph{Parallel Problem Solving from Nature -- PPSN XVI},
Lecture Notes in Computer Science~12270, 2020, pp.~186--200.
doi:10.1007/978-3-030-58115-2\_13.

\bibitem{deutzemmerichwang2023hessian}
A.~Deutz, M.~Emmerich, and H.~Wang,
``The Hypervolume Indicator Hessian Matrix: Analytical Expression, Computational Time Complexity, and Sparsity,''
in \emph{Evolutionary Multi-Criterion Optimization},
Lecture Notes in Computer Science~13970, 2023, pp.~405--418.
doi:10.1007/978-3-031-27250-9\_29.

\bibitem{emmerichdeutzbeume2007}
M.~Emmerich, A.~Deutz, and N.~Beume,
``Gradient-Based/Evolutionary Relay Hybrid for Computing Pareto Front Approximations Maximizing the S-Metric,''
in \emph{Hybrid Metaheuristics},
Lecture Notes in Computer Science~4771, 2007, pp.~140--156.
doi:10.1007/978-3-540-75514-2\_11.

\bibitem{emmerich2026magnitudedominatedsetspareto}
M.~T.~M.~Emmerich,
\emph{The Magnitude of Dominated Sets: A Pareto Compliant Indicator Grounded in Metric Geometry},
arXiv:2604.18147 [math.OC], 2026.
doi:10.48550/arXiv.2604.18147.

\bibitem{sergeyev2017numerical}
Y.~D.~Sergeyev,
``Numerical Infinities and Infinitesimals: Methodology, Applications, and Repercussions on Two Hilbert Problems,''
\emph{EMS Surveys in Mathematical Sciences} \textbf{4}(2) (2017), 219--320.
doi:10.4171/EMSS/4-2-3.

\bibitem{miettinenmakela1993}
K.~Miettinen and M.~M.~M{\"a}kel{\"a},
``An Interactive Method for Nonsmooth Multiobjective Optimization with an Application to Optimal Control,''
\emph{Optimization Methods and Software} \textbf{2}(1) (1993), 31--44.
doi:10.1080/10556789308805533.

\bibitem{leinster2013}
T.~Leinster,
``The Magnitude of Metric Spaces,''
\emph{Documenta Mathematica} \textbf{18} (2013), 857--905.

\bibitem{lopesklamrothpaquete2025}
G.~Lopes, K.~Klamroth, and L.~Paquete,
``A Greedy Hypervolume Polychotomic Scheme for Multiobjective Combinatorial Optimization,''
\emph{Computers \& Operations Research} \textbf{183} (2025), Article~107140.
doi:10.1016/j.cor.2025.107140.

\bibitem{paqueteschulzestiglmayrlourenco2022}
L.~Paquete, B.~Schulze, M.~Stiglmayr, and A.~C.~Louren\c{c}o,
``Computing Representations Using Hypervolume Scalarizations,''
\emph{Computers \& Operations Research} \textbf{137} (2022), Article~105349.
doi:10.1016/j.cor.2021.105349.

\bibitem{emmerichdeutz2014}
M.~Emmerich and A.~Deutz,
``Time Complexity and Zeros of the Hypervolume Indicator Gradient Field,''
in \emph{EVOLVE -- A Bridge between Probability, Set Oriented Numerics, and Evolutionary Computation III},
Studies in Computational Intelligence~500, 2014, pp.~169--193.
doi:10.1007/978-3-319-01460-9\_8.

\bibitem{guerreirofonsecapaquete2021}
A.~P.~Guerreiro, C.~M.~Fonseca, and L.~Paquete,
``The Hypervolume Indicator: Computational Problems and Algorithms,''
\emph{ACM Computing Surveys} \textbf{54}(6) (2021), Article~119, 42~pp.
doi:10.1145/3453474.

\bibitem{uribeetal2020hausdorffnewton}
L.~Uribe, J.~M.~Bogoya, A.~Vargas, A.~Lara, G.~Rudolph, and O.~Sch{\"u}tze,
``A Set Based Newton Method for the Averaged Hausdorff Distance for Multi-Objective Reference Set Problems,''
\emph{Mathematics} \textbf{8}(10) (2020), Article~1822.
doi:10.3390/math8101822.

\bibitem{zitzlerthiele1999}
E.~Zitzler and L.~Thiele,
``Multiobjective Evolutionary Algorithms: A Comparative Case Study and the Strength Pareto Approach,''
\emph{IEEE Transactions on Evolutionary Computation} \textbf{3}(4) (1999), 257--271.
doi:10.1109/4235.797969.

\bibitem{verhoefdeutzemmerich2018swarm}
W.~Verhoef, A.~H.~Deutz, and M.~T.~M.~Emmerich,
``On Gradient-Based and Swarm-Based Algorithms for Set-Oriented Bicriteria Optimization,''
in \emph{EVOLVE -- A Bridge between Probability, Set Oriented Numerics, and Evolutionary Computation VI},
Advances in Intelligent Systems and Computing~674, 2018, pp.~18--36.
doi:10.1007/978-3-319-69710-9\_2.

\bibitem{wangrendeutzemmerich2016}
H.~Wang, Y.~Ren, A.~Deutz, and M.~Emmerich,
``On Steering Dominated Points in Hypervolume Indicator Gradient Ascent for Bi-Objective Optimization,''
in \emph{NEO 2015},
Studies in Computational Intelligence~663, 2017, pp.~175--203.
doi:10.1007/978-3-319-44003-3\_8.

\bibitem{wangdeutzbackemmerich2017}
H.~Wang, A.~Deutz, T.~B{\"a}ck, and M.~Emmerich,
``Hypervolume Indicator Gradient Ascent Multi-Objective Optimization,''
in \emph{Evolutionary Multi-Criterion Optimization},
Lecture Notes in Computer Science~10173, 2017, pp.~654--669.
doi:10.1007/978-3-319-54157-0\_44.

\bibitem{wangemmerichdeutzhernandezschutze2023newton}
H.~Wang, M.~Emmerich, A.~Deutz, V.~A.~S.~Hern{\'a}ndez, and O.~Sch{\"u}tze,
``The Hypervolume Newton Method for Constrained Multi-Objective Optimization Problems,''
\emph{Mathematical and Computational Applications} \textbf{28}(1) (2023), Article~10.
doi:10.3390/mca28010010.

\end{thebibliography}
\end{document}